\documentclass[11pt,reqno]{article}
\usepackage[a4paper,margin=1.1in]{geometry}
\usepackage{microtype}
\usepackage{psfrag}
\usepackage[parfill]{parskip}
\usepackage{float}
\usepackage{graphicx}
\usepackage{epsfig}
\usepackage{amsmath, amsthm, amssymb}
\usepackage{verbatim}
\usepackage{multicol}
\usepackage{url}
\usepackage{latexsym}
\usepackage{mathtools}
\usepackage{mathrsfs}
\usepackage[colorlinks = true,
            linkcolor  = blue,
            citecolor  =blue,
            urlcolor   =blue,
            bookmarks  = true]{hyperref}
 \usepackage{enumitem}
\setlist[enumerate,1]{label=(\roman*)}
\usepackage[normalem]{ulem}
\usepackage{bm}
\usepackage{dsfont}
\usepackage{stmaryrd}
\usepackage[dvipsnames]{xcolor}

\usepackage[backend=biber, style=numeric, citestyle=numeric-comp, giveninits=true, doi=false, isbn=false, url=false, eprint=false, maxbibnames=99]{biblatex}
\DeclareFieldFormat{labelnumberwidth}{\mkbibbrackets{#1}}
\DeclareNameAlias{author}{given-family}
\DeclareNameAlias{editor}{given-family}

\DeclareFieldFormat[article,inproceedings,incollection,misc,unpublished,book]{title}{#1}
\DeclareFieldFormat[article]{journaltitle}{#1}
\DeclareFieldFormat[article]{volume}{#1}

\AtEveryBibitem{%
  \clearfield{issn}
  \clearfield{url}
  \clearfield{eprint}
  \clearfield{doi}
  \clearfield{isbn}
  \clearfield{number}
  \clearfield{mrnumber}
  \clearfield{mrclass}
  \clearfield{reviewer}
}

\renewbibmacro{in:}{}

\renewbibmacro*{volume+number+eid}{%
  \printfield{volume}%
  \setunit{\addcomma\space}%
  \printfield{eid}}

\DeclareDelimFormat{namedelim}{\addcomma\space}

\DeclareFieldFormat{pages}{#1}

\numberwithin{equation}{section}

\theoremstyle{plain}

\newtheorem{theorem}{Theorem}[section]
\newtheorem{lemma}[theorem]{Lemma}
\newtheorem{proposition}[theorem]{Proposition}
\newtheorem{corollary}[theorem]{Corollary}

\theoremstyle{definition}

\newtheorem{definition}[theorem]{Definition}

\newtheorem{remark}[theorem]{Remark}

\DeclarePairedDelimiterX{\ip}[2]{\langle}{\rangle}{#1, #2}
\DeclarePairedDelimiterX{\norm}[1]{\lVert}{\rVert}{#1}

\newcommand{\bS}{\mathbb{S}}
\newcommand{\R}{\mathbb{R}}

\usepackage{cleveref}
  \crefname{theorem}{Theorem}{Theorems}
  \crefname{thm}{Theorem}{Theorems}
  \crefname{thm*}{Theorem*}{Theorems}
  \crefname{lemma}{Lemma}{Lemmas}
  \crefname{lem}{Lemma}{Lemmas}
  \crefname{remark}{Remark}{Remarks}
  \crefname{prop}{Proposition}{Propositions}
\crefname{notation}{Notation}{Notations}
\crefname{claim}{Claim}{Claims}
  \crefname{defn}{Definition}{Definitions}
  \crefname{corollary}{Corollary}{Corollaries}
  \crefname{section}{Section}{Sections}
  \crefname{figure}{Figure}{Figures}
    \crefname{assumption}{Assumption}{Assumptions}

\usepackage{orcidlink}

\begin{document}

\title{The smallest singular value of products of random combinatorial and deterministic matrices}

\author{%
  Yuxiao Shi, Kexin Yu, and Tingzhou Yu%
  \raisebox{0.6ex}{\scriptsize\orcidlink{0009-0005-0414-7522}}%
}
\date{}

\maketitle

\begin{abstract}
Let $p\in(0,1/2]$ be fixed, and let $d,n$ be positive integers satisfying $d=pn$. Let $Q_n$ be an $n\times n$ random combinatorial matrix whose rows are independent and uniformly distributed over the vectors in $\{0,1\}^n$ having exactly $d$ ones. For an arbitrary deterministic invertible $n\times n$ matrix $M$, we establish two-sided bounds for the smallest singular value of $Q_nM$. More precisely, for every $\varepsilon\in(0,1)$,
\[
\mathbb P\!\left(
\varepsilon\le s_n(Q_nM)\cdot \|M^{-1}\|_{\mathrm{HS}}
\le \varepsilon^{-2}
\right)
\ge 1-C\left(
\varepsilon+d^{-1/2}+e^{-c\,\operatorname{sr}(M^{-1})}
\right)-2e^{-cn},
\]
where positive constants $C, c$ depend only on $p$ and $\operatorname{sr}$ denotes the stable rank.
Consequently, for any sequence of deterministic invertible matrices $M$ satisfying $\operatorname{sr}(M^{-1})\to\infty$, $s_n(Q_nM)$ is typically of order $\|M^{-1}\|_{\mathrm{HS}}^{-1}$. In particular, taking $M=\mathbf I_n$ recovers the known $n^{-1/2}$ scale for $s_n(Q_n)$.
\end{abstract}

\bigskip
\noindent{\bf AMS 2010 Classification:}
Primary 60B20; Secondary 15B52, 60C05, 05C80.\\
\noindent{\bf Keywords:} Smallest singular value; random combinatorial matrices; stable rank; random permutations.

\section{Introduction}\label{sec1}
Let $A$ be an $n \times n$ real matrix, and let $s_n(A)$ denote its smallest singular value. It admits the following variational characterization:
\[
s_n(A)=\inf_{x \in \mathbb{S}^{n-1}} \|Ax\|_2.
\]
 The quantity $s_n(A)$ measures the distance from $A$ to the set of singular matrices in the operator norm. In particular, $s_n(A)>0$ if and only if $A$ is invertible, in which case $s_n(A)={\|A^{-1}\|}^{-1}$.

 The quantitative study of the smallest singular value goes back to von Neumann and Goldstine's work \cite{numericalinverting,collectedworks}. For matrices with i.i.d.\ Gaussian entries, Edelman and Szarek established the $n^{-1/2}$ scale for the smallest singular value \cite{eigenvalues,conditionnumbers}. Subsequent work obtained quantitative estimates for a wider class of independent entry distributions \cite{rudelson2008invertibility,tao2009inverse, thelittle, rudelson2009smallest,tao2010random, coveringof,livshyts2021smallestTK}, including rectangular \cite{litvak2005smallest, fernandez2025logconcave, dabagia2024inhomogeneous, fernandez2024distance} and sparse \cite{litvak2022singularity, yu2025sparsediscrete, basakrudelson2017invertibility, basak2021sharp} matrices. See \cite{tikhomirov2022quantitative} for a survey. 
For an $n\times n$ matrix $B$ with i.i.d. entries distributed as a real subgaussian random variable $\xi$ with mean zero and variance one, a recent result by Sah, Sahasrabudhe, and Sawhney \cite[Theorem 1.1]{sah2025spielmanteng} sharpens the earlier result \cite{thelittle} to
\[
\mathbb P\!\left(s_n(B)\le\frac{\varepsilon}{\sqrt n}\right)
\le (1+o_n(1))\varepsilon+e^{-c_\xi n},
\qquad \varepsilon\ge 0,
\]
where $c_\xi>0$ depends only on $\xi$.

Progress has been slower for matrices with dependent entries, where many approaches for the independent entry case no longer apply. For example, the bound of the smallest singular value for symmetric matrices with i.i.d.\ mean zero, variance one subgaussian entries on and above the diagonal was obtained only recently \cite{symmetricsubgaussian}. Other examples come from adjacency matrices of random regular directed graphs \cite{cookregular,litvakdigraphs,litvakshiftedregular,jainregular}. 

In this paper, we study the smallest singular value of the product $Q_nM$, where $Q_n$ is the $n\times n$ random matrix  whose rows are independent and uniformly distributed over the set of vectors $\{0,1\}^n$ having exactly $d=pn$ ones and $M$ is an arbitrary deterministic invertible $n\times n$ real matrix. Such a matrix is often called a random combinatorial matrix in the literature. The matrix $Q_n$ may also be viewed as the adjacency matrix of a random directed graph in which every vertex has a fixed out-degree $d$, with self-loops allowed.

When $M=\mathbf I_n$, the singularity problem for $Q_n$ was initiated by Nguyen \cite{nguyencombinatorial}, with further results for the smallest singular value \cite{jainapprox,tran} and singularity probability in \cite{ferberjainluhsamotij}. In the dense regime $d=pn$ with fixed $p\in (0,1/2]$, Tran's lower bound \cite[Theorem 1.2 and Remark (3)]{tran} and the complementary upper bound of D. Li, A. Litvak, and T. Yu \cite{lianupper} determine the typical $n^{-1/2}$ scale of $s_n(Q_n)$. In the sparse regime $C\log n \le d \le n/2$, quantitative lower bounds for the smallest singular value of shifts of $Q_n$ were obtained in \cite{lianlitvakyucircular}. For a related signed random combinatorial matrix, the lower bound for the smallest singular value in the dense regime was proved by K. Yu in \cite{yusigned}. A complementary upper bound for the same signed model was obtained in \cite{shi2026signedupper}. None of these results covers the product $Q_nM$ with a general deterministic invertible matrix $M$.

For products of random and deterministic matrices, earlier work includes spectral norm estimates \cite{vershynin2011products} and a local circular law \cite{xiyangyin2017product}. Products of several independent random matrices have also been studied through least singular value estimates and universality \cite{kopel2020products,chaudhuri2021products}, and through their $\ell^\infty\!\to\!\ell^\infty$ operator norm \cite{mourrat2026products}.
Recently, for a matrix $A$ with independent, mean zero, variance one entries satisfying uniform fourth moment and anti-concentration bounds, and a deterministic invertible matrix $M$, Letwin and Polavarapu \cite{letwin2026smallest} proved that
\[
\mathbb P\left(s_n(MA)\le \frac{\varepsilon}{\|M^{-1}\|_{\mathrm{HS}}}\right)
\le C\varepsilon+e^{-cn}.
\]
For an $n\times n$ Gaussian matrix $G_n$,  they also proved that $\mathbb E\,s_n(MG_n)\asymp\|M^{-1}\|_{\mathrm{HS}}^{-1}$ for every deterministic invertible $M$, and that $s_n(MG_n)\le Ct\|M^{-1}\|_{\mathrm{HS}}^{-1}$ with probability at least $1-Ce^{-ct^2}-Ce^{-c\operatorname{sr}(M^{-1})}$ in \cite[Theorem 5.1]{letwin2026smallest},  where $\operatorname{sr}(A)=\|A\|_{\mathrm{HS}}^2/\|A\|^2$ denotes the stable rank of a matrix $A$.

Motivated by this result, we consider the corresponding problem for random combinatorial matrices. The matrix $M$ may distort different directions by different amounts, so the typical scale $n^{-1/2}$ for the smallest singular value is no longer expected. For every deterministic invertible $M$, we prove a lower bound at the scale $\|M^{-1}\|_{\mathrm{HS}}^{-1}$, and we also prove that this is the typical scale whenever $\operatorname{sr}(M^{-1})\to \infty$.

Our first main theorem establishes a lower bound for the smallest singular value of $Q_nM$.
\begin{theorem}\label{theorem 1.1}
Let $p\in (0,1/2]$ be fixed. Let $d, n\in \mathbb{N}$ be such that $d=pn$. Let $Q_n$ be an $n\times n$ random matrix whose rows are independent and uniformly distributed over the set of vectors in $\{0,1\}^{n}$ with exactly $d$ ones. Then there exist constants  $C_{\ref{theorem 1.1}},c_{\ref{theorem 1.1}}$, depending only on $p$, such that for every deterministic invertible $n\times n$ matrix $M$ and every $\varepsilon\ge 0$,
\begin{equation}
\mathbb{P}\left( s_n(Q_nM) \le \frac{\varepsilon}{\|M^{-1}\|_{\mathrm{HS}}} \right)
\le  C_{\ref{theorem 1.1}} \varepsilon + 2e^{-c_{\ref{theorem 1.1}}n}.
\nonumber
\end{equation}
\end{theorem}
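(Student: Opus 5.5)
Write $A=Q_nM$ and $B=M^{-1}$. By the variational characterization, $s_n(A)=\inf_{x\in\bS^{n-1}}\norm{Q_nMx}_2$. The substitution $y=Mx/\norm{Mx}_2$ gives
\[
s_n(Q_nM)=\inf_{y\in\bS^{n-1}}\frac{\norm{Q_ny}_2}{\norm{By}_2}.
\]
So we must show that, with high probability, $\norm{Q_ny}_2\ge \varepsilon\norm{By}_2/\norm{B}_{\mathrm{HS}}$ for all unit $y$. Equivalently, $s_n(Q_nM)=1/\norm{BQ_n^{-1}}$ when $Q_n$ is invertible. The plan is to adapt the Rudelson--Vershynin ``invertibility via distance'' scheme, as Letwin--Polavarapu do for $MA$. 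Here the deterministic factor sits on the right, so the relevant object is $\norm{Q_n^{-1}}$ weighted by $B$. Concretely, $\norm{BQ_n^{-1}}\le\norm{BQ_n^{-1}}_{\mathrm{HS}}$. The $j$-th column of $Q_n^{-1}$ is $w_j/\ip{w_j}{R_j}$, where $R_j$ is the $j$-th row of $Q_n$ and $w_j$ is a unit normal to the span of the other rows. This gives
\[
\norm{BQ_n^{-1}}_{\mathrm{HS}}^2=\sum_j\frac{\norm{Bw_j}_2^2}{\ip{w_j}{R_j}^2}.
\]
This bound is too lossy by itself; it costs a factor of $n$. So we only use it on ``spread'' directions and treat compressible directions separately.

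\textbf{Step 1 (compressible directions).} We first reduce to vectors nearly orthogonal to $\mathbf 1$. Since $Q_n\mathbf 1=d\mathbf 1$, the $\mathbf 1$-component of $y$ is amplified by $d\sim pn$, and the known structure theory for $Q_n$ (Tran, and Li--Litvak--Yu) shows the following. With probability $1-e^{-cn}$, $\norm{Q_ny}_2\ge c\sqrt n$ for all $y$ that are either compressible or have a large component along $\mathbf 1$. For such $y$ we have $\norm{By}_2\le\norm{B}\le\norm{B}_{\mathrm{HS}}$. Hence $\norm{Q_ny}_2\ge c\sqrt n\,\norm{By}_2/\norm{B}_{\mathrm{HS}}$, which is much stronger than what is needed. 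This uses $\norm B\le\norm B_{\mathrm{HS}}$ and nothing else about $M$.

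\textbf{Step 2 (incompressible directions, negative second moment with weights).} For the remaining $y$, we run the Rudelson--Vershynin averaging argument with weights. Let $\sigma_i$ be the singular values of $B$ with right singular vectors $v_i$. Take the minimizer $x=Mx'$ direction and write $Q_n^{-1}$-columns as above. The event $s_n(A)\le\varepsilon/\norm B_{\mathrm{HS}}$ forces an incompressible unit $y$ with $\norm{Q_ny}\le\varepsilon\norm{By}/\norm B_{\mathrm{HS}}$. We then need a weighted averaging lemma: for incompressible $y$, a constant fraction of the coordinates $j$ satisfy $|y_j|\gtrsim n^{-1/2}$. For such $j$,
\[
\mathrm{dist}(R_j,H_j)\le |y_j|^{-1}\norm{Q_ny}\lesssim \sqrt n\,\varepsilon\,\frac{\norm{By}_2}{\norm B_{\mathrm{HS}}},
\]
where $H_j$ is the span of the other rows. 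The trouble is that $\norm{By}$ is not controlled uniformly. We handle this by a dual formulation, which is more natural here. The $j$-th column of $BQ_n^{-1}$ has norm $\norm{Bw_j}/|\ip{w_j}{R_j}|$, and $\norm{BQ_n^{-1}}\le \max$ over unit $u$ in the span. Instead we bound $\norm{BQ_n^{-1}}\lesssim$ the contribution of columns restricted to incompressible combinations, via the same averaging lemma applied to $u^TBQ_n^{-1}$.

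\textbf{Step 3 (key estimate, the main obstacle).} What remains is to show that $\ip{w_j}{R_j}$ anticoncentrates at scale $n^{-1/2}$ up to $C\varepsilon+e^{-cn}$. We also need the ratio $\norm{Bw_j}^2$ to average against $\norm B_{\mathrm{HS}}^2/n$. I expect the following to hold: the normal $w_j$ is independent of $R_j$, and by symmetry under column permutations the law of $Q_n$ is permutation invariant. Hence, after conditioning on the other rows, $w_j$ composed with a uniformly random permutation $\pi$ is exchangeable in distribution. Then $\mathbb E\norm{Bw_j}^2\approx \norm{B}_{\mathrm{HS}}^2/n$ after projecting off $\mathbf 1$, because averaging $\norm{B P_\pi w}^2$ over permutations gives $\frac{1}{n-1}\norm{B(I-\frac1n\mathbf 1\mathbf 1^T)}_{\mathrm{HS}}^2\norm{w-\bar w\mathbf 1}^2$ plus a mean term. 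The anticoncentration $\mathbb P(|\ip{w_j}{R_j}|\le t n^{-1/2})\le Ct+e^{-cn}$ is exactly Tran's small-ball bound for combinatorial rows against normals with no arithmetic structure, which is where the $e^{-cn}$ arises.

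The difficulty is that $\norm{Bw_j}$ and $\ip{w_j}{R_j}$ are correlated through the permutation randomness. We decouple them as follows. Condition on the unordered data and use the randomness of column permutations together with a Markov bound on the first factor and Tran's bound on the second. Markov applied to $\sum_j\norm{Bw_j}^2\mathbf 1\{|\ip{w_j}{R_j}|\ge \varepsilon n^{-1/2}\}\cdot n/\varepsilon^2$ combined with the averaging lemma should then give probability $C\varepsilon+2e^{-cn}$. Making this decoupling rigorous, and handling the $\mathbf 1$-direction inside the permutation average, is the step I expect to be hardest.
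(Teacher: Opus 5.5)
Your ingredients are the right ones: the compressible/incompressible split, Rudelson--Vershynin distances, a permutation-symmetry bound on $\mathbb E\norm{Bw_j}_2^2$, and Tran's CLCD small-ball estimate. However, the assembly in Steps 2--3 does not go through as written.

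\textbf{The reduction is set up on the wrong side.} The inequality $\operatorname{dist}(R_j,H_j)\le|y_j|^{-1}\norm{Q_ny}_2$ does not hold. The bound $\norm{Q_ny}_2\ge|y_j|\operatorname{dist}(\cdot,\cdot)$ involves the \emph{columns} of $Q_n$, which are dependent; the row version bounds $\norm{Q_n^Ty}_2$ instead. The fix is as follows.
\begin{itemize}
\item Write $s_n(Q_nM)=s_n(M^TQ_n^T)$, whose columns $M^TR_j$ are independent.
\item Split the \emph{left} vector $x$ into compressible and incompressible parts. The compressible part follows from Tran's bound $\norm{Q_n^Tx}_2\ge c\sqrt n$ together with $\norm{M^TQ_n^Tx}_2\ge\norm{Q_n^Tx}_2/\norm{B}_{\mathrm{HS}}$.
\item Apply the Rudelson--Vershynin lemma to the columns $M^TR_j$. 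Since $Bw_j$ is normal to $M^TH_j$,
\[
\operatorname{dist}(M^TR_j,M^TH_j)\ge\frac{|\ip{w_j}{R_j}|}{\norm{Bw_j}_2}.
\]
\end{itemize}
So the weight that appears is $\norm{Bw_j}_2$, a function of the rows other than $R_j$, rather than $\norm{By}_2$ for a global minimizer $y$. This removes your ``not controlled uniformly'' issue. Your dual formulation through the columns $Bw_j/\ip{w_j}{R_j}$ of $BQ_n^{-1}$ is this same lemma, but it has to be set up this way.

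\textbf{The combination step is where the proposal actually breaks, and the obstacle you describe is not there.} Conditional on the rows other than $R_j$, both $w_j$ and $\norm{Bw_j}_2$ are fixed and $R_j$ is independent of them, so nothing needs decoupling.

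On the event (probability at least $1-e^{-cn}$) that $w_j$ is not almost constant and has $\operatorname{CLCD}\ge e^{cn}$, the small-ball bound is
$\mathbb P(|\ip{w_j}{R_j}|\le t\mid \text{other rows})\le Ct+Ce^{-cn}$ for all $t\ge0$. This is at unit scale: the variance of $\ip{w_j}{R_j}$ is of order $\norm{w_j-\bar w_j\mathbf 1_n}_2^2\asymp 1$, with $\bar w_j$ the coordinate average. The scale $tn^{-1/2}$ you state loses a factor $\sqrt n$.

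Apply the bound with the random radius $t=\varepsilon\sqrt n\,\norm{Bw_j}_2/\norm{B}_{\mathrm{HS}}$ and integrate. Because the bound is linear in $t$, all you need is $\mathbb E\norm{Bw_j}_2\le(\mathbb E\norm{Bw_j}_2^2)^{1/2}\lesssim\norm{B}_{\mathrm{HS}}/\sqrt n$. Your permutation-average formula gives this once the mean term is controlled, and that is easy. The facts $Q_n^{(j)}w_j=0$, $Q_n^{(j)}\mathbf 1_n=d\,\mathbf 1_{n-1}$, and $\norm{Q_n^{(j)}P_{\mathbf 1_n^\perp}}\le C\sqrt n$ (off an event of probability $e^{-cn}$) force $|\ip{w_j}{n^{-1/2}\mathbf 1_n}|\lesssim n^{-1/2}$. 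This is exactly the paper's average projection estimate. Summing over $j$ in the Rudelson--Vershynin lemma then yields $C\varepsilon+2e^{-cn}$.

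By contrast, your proposed Markov bound on $\frac{n}{\varepsilon^2}\sum_j\norm{Bw_j}_2^2\mathbf 1\{\cdot\}$ is the lossy Hilbert--Schmidt route again. It gives at best $s_n(Q_nM)\gtrsim\varepsilon/(\sqrt n\,\norm{B}_{\mathrm{HS}})$ with a Markov-type failure probability, and it leaves the excluded columns uncontrolled. So it cannot produce the stated bound.
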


\begin{remark}\label{rem:lowerbound}
Theorem~\ref{theorem 1.1} improves the bound obtained by combining the inequality $s_n(Q_nM)\ge s_n(Q_n)s_n(M)$ with the known lower bound for $s_n(Q_n)$ \cite{tran},
\[
\mathbb P\!\left(s_n(Q_nM)\le\varepsilon\frac{s_n(M)}{\sqrt n}\right)
\le C\varepsilon+2e^{-cn},\qquad \varepsilon\ge0.
\]
Since \[
\frac{1}{\|M^{-1}\|_{\mathrm{HS}}}\ge \frac{1}{\sqrt n\,\|M^{-1}\|
}=\frac{s_n(M)}{\sqrt n},
\]
the lower bound in Theorem \ref{theorem 1.1} is larger than the above lower bound.
\end{remark}

The following theorem provides a complementary upper bound for $s_n(Q_nM)$.
\begin{theorem}\label{theorem 1.2}
Under the same assumptions as in Theorem \ref{theorem 1.1}, there exist constants $C_{\ref{theorem 1.2}},c_{\ref{theorem 1.2}}>0$, depending only on $p$, such that for every deterministic invertible $n\times n$ matrix $M$ and every $\varepsilon> 0$,
\begin{equation}
\mathbb{P}\left(s_n(Q_nM)>\frac{1}{\varepsilon^2\|M^{-1}\|_{\mathrm{HS}}}\right)\le C_{\ref{theorem 1.2}} \left(\varepsilon +\frac{1}{\sqrt{d}}+e^{-c_{\ref{theorem 1.2}} \operatorname{sr}(M^{-1})}\right),
\nonumber
\end{equation}
where $\operatorname{sr}(M^{-1})=\|M^{-1}\|_{\mathrm{HS}}^2/\|M^{-1}\|^2$ denotes the stable rank of $M^{-1}$.
\end{theorem}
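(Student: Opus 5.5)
The plan is to bound $s_n(Q_nM)$ from above by testing against a single well-chosen vector built from the normal vector to $n-1$ rows of $Q_n$. We may work on the event that $Q_n$ is invertible, which by Theorem~\ref{theorem 1.1} (with $\varepsilon\to0$) fails with probability at most $2e^{-cn}\le C/\sqrt d$, since $d=pn$. Let $R_1,\dots,R_n$ be the rows of $Q_n$, and let $w\in\bS^{n-1}$ be a unit normal to $R_2,\dots,R_n$, chosen measurably from these rows only, so that $R_1$ is independent of $w$. The first column of $Q_n^{-1}$ is $w/\ip{R_1}{w}$. Hence $(Q_nM)^{-1}e_1=M^{-1}w/\ip{R_1}{w}$, and therefore
\[
s_n(Q_nM)=\|(Q_nM)^{-1}\|^{-1}\le \frac{|\ip{R_1}{w}|}{\|M^{-1}w\|}.
\]
It then suffices to show two things: $|\ip{R_1}{w}|\le \varepsilon^{-1}$ with probability $1-C(\varepsilon+d^{-1/2})$, and $\|M^{-1}w\|\ge \varepsilon\|M^{-1}\|_{\mathrm{HS}}$ with probability $1-C(\varepsilon+d^{-1/2}+e^{-c\operatorname{sr}(M^{-1})})$.

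\emph{Step 1 (numerator).} Condition on $w$. For $R_1$ uniform over $d$-subsets we have $\mathbb E\ip{R_1}{w}=p\ip{\mathbf 1}{w}$ and $\operatorname{Var}\ip{R_1}{w}\le p\|w\|_2^2=p$. So by Chebyshev it is enough to prove $|\ip{\mathbf 1}{w}|\le C_p$ with high probability. Since $\ip{R_i}{w}=0$ for $i\ge2$, the centered rows satisfy $\ip{R_i-p\mathbf 1}{w}=-p\ip{\mathbf 1}{w}$. Thus, writing $\bar Q$ for the $(n-1)\times n$ matrix with rows $R_i-p\mathbf 1$, $i\ge2$, we get $p|\ip{\mathbf 1}{w}|\sqrt{n-1}=\|\bar Qw\|\le\|\bar Q\|$. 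The centered rows are independent and (via standard concentration on the slice) subgaussian with constants depending on $p$, so $\|\bar Q\|\le C\sqrt n$ with probability $1-e^{-cn}$. This gives $|\ip{\mathbf 1}{w}|\le C/p$. The resulting failure probability $e^{-cn}$ is absorbed into $C/\sqrt d$.

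\emph{Step 2 (denominator, the main obstacle).} We need a small-ball estimate for $\|M^{-1}w\|$ where $w$ has dependent, structured coordinates. The key is exchangeability. The law of $(R_2,\dots,R_n)$ is invariant under permutations of columns, so we may write $w=P_\sigma w_0$, with $\sigma$ a uniform random permutation independent of $w_0$. Then $\|M^{-1}P_\sigma w_0\|^2=\sum_{j,k}(M^{-\top}M^{-1})_{jk}\,w_0(\sigma^{-1}j)\,w_0(\sigma^{-1}k)$ is a quadratic form in a random permutation. Its mean is
\[
\frac{\|M^{-1}\|_{\mathrm{HS}}^2}{n-1}\Bigl(1-\frac{\ip{\mathbf 1}{w_0}^2}{n}\Bigr)+O\Bigl(\frac{\ip{\mathbf 1}{w_0}^2}{n}\Bigr)\|M^{-1}\|^2 .
\]
This expression already shows why $|\ip{\mathbf 1}{w}|$ must be controlled as in Step 1. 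The mean is of order $\|M^{-1}\|_{\mathrm{HS}}^2/n$ times $n\|w_0\|^2$ scaled appropriately; more precisely, after normalization the typical value is $\|M^{-1}\|_{\mathrm{HS}}^2/n$, which is off by the factor $n$. So, as for Gaussian $w$, we need a \emph{lower deviation} bound only relative to this mean. To fix the scaling correctly, I would instead test with $\sqrt n$-normalized delocalized vectors. Using the known incompressibility/no-gaps delocalization of normals to rows of $Q_n$ (from the lower-bound machinery behind Theorem~\ref{theorem 1.1}, or from \cite{tran}), we get, with probability $1-C/\sqrt d$, that $w_0$ has a set of $\ge cn$ coordinates of size $\asymp n^{-1/2}$.

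On this event, I would apply a concentration inequality for functions of a uniform random permutation (Talagrand's convex-distance inequality). The function $\sigma\mapsto\|M^{-1}P_\sigma w_0\|$ is convex-type and Lipschitz with constant $\lesssim\|M^{-1}\|\,\|w_0\|_\infty$. This yields lower-tail deviation $\exp(-c\,\mathbb E\|M^{-1}P_\sigma w_0\|^2/\|M^{-1}\|^2\|w_0\|^2_\infty)=\exp(-c\operatorname{sr}(M^{-1}))$, which is the source of the $e^{-c\operatorname{sr}(M^{-1})}$ term. The residual $\varepsilon$ and $d^{-1/2}$ losses come from Step 1 and the delocalization event.

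Reconciling the normalizations, I would carry the factor $\sqrt n$ in both numerator and denominator, i.e.\ bound $|\ip{R_1}{w}|\lesssim\varepsilon^{-1}$ for unit $w$ and $\|M^{-1}w\|\gtrsim\varepsilon\|M^{-1}\|_{\mathrm{HS}}/\sqrt n\cdot\sqrt n$ via the spread coordinates. The delicate part I expect is exactly this bookkeeping together with the permutation-concentration step, where the lower tail must be obtained without assuming $M^{-1}$ has any structure beyond its stable rank.
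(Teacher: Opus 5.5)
There is a genuine gap: your test vector loses a factor $\sqrt n$, and this cannot be fixed by bookkeeping. You noticed the mismatch yourself in Step~2 (the factor $n$ in the squared norm). The proposed reconciliation cannot work, because the bound $s_n(Q_nM)\le|\langle R_1,w\rangle|/\|M^{-1}w\|_2$ is invariant under rescaling $w$. With $\|w\|_2=1$, your Step~2 target $\|M^{-1}w\|_2\ge\varepsilon\|M^{-1}\|_{\mathrm{HS}}$ is false whenever $\operatorname{sr}(M^{-1})>\varepsilon^{-2}$, since $\|M^{-1}w\|_2\le\|M^{-1}\|=\|M^{-1}\|_{\mathrm{HS}}/\sqrt{\operatorname{sr}(M^{-1})}$. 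For $M=\mathbf I_n$ it reads $1\ge\varepsilon\sqrt n$. That is exactly the regime where the theorem has content.

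More decisively, the single-column bound can never reach the scale $\|M^{-1}\|_{\mathrm{HS}}^{-1}$. Your $w$ is an $\mathcal F_1$-measurable unit vector in $H_1^\perp$, where $H_1=\operatorname{span}\{R_2,\dots,R_n\}$. So Theorem~\ref{average projection estimate} and Markov's inequality give $\|M^{-1}w\|_2\le C\|M^{-1}\|_{\mathrm{HS}}/(\varepsilon\sqrt n)$ with probability at least $1-\varepsilon$. On your invertibility event, $|\langle R_1,w\rangle|=\operatorname{dist}(R_1,H_1)$, and by Lemma~\ref{distanceupperbound} with $M=\mathbf I_n$ this is at least $\varepsilon$ with probability at least $1-C\varepsilon-2e^{-cn}$. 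Hence, with probability $1-O(\varepsilon)-2e^{-cn}$, your upper bound is itself at least $c\,\varepsilon^2\sqrt n/\|M^{-1}\|_{\mathrm{HS}}$. For $M=\mathbf I_n$ it only says $s_n(Q_n)\le\operatorname{dist}(R_1,H_1)=O(1)$, whereas $s_n(Q_n)\asymp n^{-1/2}$.

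The missing ingredient is a test direction $u$ with $\|Q_nu\|_2\lesssim n^{-1/2}$. No single column of $Q_n^{-1}$ supplies one, and the $n^{-1/2}$ upper tail of $s_n(Q_n)$ (\cite[Theorem 1.1]{lianupper}) is a nontrivial external input that your argument never uses. The paper proceeds as follows:
\begin{enumerate}
\item It takes $u$ to be a least right singular vector of $Q_n$.
\item It introduces an independent uniform permutation $\Pi$, so that $s_n(Q_nM)\stackrel{d}{=}s_n(Q_n\Pi^TM)\le s_n(Q_n)/\|M^{-1}\Pi u\|_2$.
\item It observes that $u\notin\operatorname{Cons}(\delta,\rho)$ with high probability, because $\|Q_nu\|_2$ is small while almost-constant vectors satisfy $\|Q_nv\|_2\gtrsim\sqrt{pn}$ (Proposition~\ref{constant estimate}).
\item It shows $\|M^{-1}\Pi u\|_2\gtrsim\|M^{-1}\|_{\mathrm{HS}}/\sqrt n$ outside an event of probability $Ce^{-c\operatorname{sr}(M^{-1})}$.
\end{enumerate}

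Your exchangeability idea is in the spirit of this last step, but it has its own problem. A Talagrand-type permutation bound for $\sigma\mapsto\|M^{-1}P_\sigma w_0\|_2$ has fluctuation scale $\|M^{-1}\|\,\|w_0\|_\infty$. Knowing that $w_0$ has $cn$ coordinates of size $\asymp n^{-1/2}$ gives no control of $\|w_0\|_\infty$. The paper avoids this by randomizing only through Rademacher swaps along a heavy matching (Lemmas~\ref{matchingwithlargeweight}--\ref{independentswapsondisjointpairs}). These swaps are used only on pairs straddling the two separated classes of Lemma~\ref{nonconstanseparated}, where the increments are bounded by $6/\sqrt{\delta n}$, and the paper then applies Hanson--Wright.
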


\begin{remark}
Combining Theorem~\ref{theorem 1.1} and Theorem~\ref{theorem 1.2}, we show that $s_n(Q_nM)$ is typically of order $\|M^{-1}\|_{\mathrm{HS}}^{-1}$ when $\operatorname{sr}(M^{-1})\to\infty$. More precisely, for every $\eta>0$, there are constants $0<a_\eta<b_\eta$, depending only on $p$ and $\eta$, such that, for all sufficiently large $n$,
\[
\mathbb P\!\left(a_\eta
\le s_n(Q_nM) \cdot \|M^{-1}\|_{\mathrm{HS}}\le b_\eta\right)\ge1-\eta.
\]
We provide two examples as follows.
\begin{enumerate}
\item If $M=\mathbf I_n$, then $\|M^{-1}\|_{\mathrm{HS}}=\sqrt n$ and $\operatorname{sr}(M^{-1})=n$. For each $\eta>0$ and all sufficiently large $n$,
\[
\mathbb P\!\left(a_\eta \le s_n(Q_n)\cdot n^{1/2}\le b_\eta \right)\ge1-\eta.
\]
\item Let $k_n=\lfloor\sqrt n\rfloor$ and $M=\operatorname{diag}(n^{-1}\mathbf I_{k_n},\mathbf I_{n-k_n})$. In this case,
\[
\begin{aligned}
L_n:=\|M^{-1}\|_{\mathrm{HS}}^{-1}=(k_n n^2+n-k_n)^{-1/2}\sim n^{-5/4}, \qquad 
\operatorname{sr}(M^{-1})=k_n+\frac{n-k_n}{n^2}\sim\sqrt n.
\end{aligned}
\]
For $0<\varepsilon<1$, by Theorem~\ref{theorem 1.1} and Theorem~\ref{theorem 1.2},
\[
\begin{aligned}
\mathbb P\!\left(\varepsilon <s_n(Q_nM)\cdot L_n^{-1}\le\varepsilon^{-2}\right)\ge1-C\left(\varepsilon+n^{-1/2}+e^{-c\sqrt n}\right)-2e^{-cn},
\end{aligned}
\]
where $C,c>0$ depend only on $p$. Hence $s_n(Q_nM)$ is typically of order $n^{-5/4}$. The elementary product argument in Remark~\ref{rem:lowerbound} gives only a lower bound of order $n^{-3/2}$.
\end{enumerate}

The stable rank can be estimated from the singular values of $M$. Writing $s_1(M)\ge\cdots\ge s_n(M)>0$, we have
\[
\operatorname{sr}(M^{-1})
=\sum_{j=1}^n\left(\frac{s_n(M)}{s_j(M)}\right)^2
\ge\frac{n}{\kappa(M)^2},
\qquad
\kappa(M):=\frac{s_1(M)}{s_n(M)}.
\]
Thus, $\kappa(M)=o(\sqrt n)$ is sufficient for $\operatorname{sr}(M^{-1})\to\infty$. The condition does not always hold. For example, if $M=\operatorname{diag}(n^{-1},1,\ldots,1)$, then $\|M^{-1}\|_{\mathrm{HS}}^{-1}\sim n^{-1}$ but $\operatorname{sr}(M^{-1})\to 1$. Theorem \ref{theorem 1.1} still improves the lower bound from order $n^{-3/2}$ to order $n^{-1}$, while Theorem \ref{theorem 1.2} does not establish a matching typical order for this example. For Gaussian matrices, the stable rank term in the probability bound is in general unavoidable \cite[Proposition 5.5]{letwin2026smallest}, although $\mathbb E\,s_n(MG_n)\le C\|M^{-1}\|_{\mathrm{HS}}^{-1}$  for every deterministic invertible matrix $M$ \cite[Proposition 5.3]{letwin2026smallest}.
\end{remark}

\subsection{An application: quantitative stability of random participation matrices}
Let $d=pn$ be as in Theorem \ref{theorem 1.1}, and let $x_1,\ldots,x_n\in\mathbb R^m$ be fixed unknown vectors, where $m\in\mathbb N$. For each $t\in[n]$, let $S_t$ be sampled independently and uniformly from the family of all $d$-element subsets of $[n]$. Thus, $S_t$ is the set of indices selected at round $t$, and its indicator vector $(\mathbf1_{\{i\in S_t\}})_{i=1}^n\in\{0,1\}^n$ has exactly $d$ ones. Given $S_t$, observe the weighted sum
\[
y_t=\sum_{i\in S_t}w_i x_i+z_t,
\]
where the weights $w_i>0$ are deterministic and known, and $z_t\in\mathbb R^m$ is a perturbation. Define the participation matrix $P_n\in\{0,1\}^{n\times n}$ by
\[
(P_n)_{ti}:=\mathbf1_{\{i\in S_t\}},\qquad t, \, i\in[n],
\]
Let $D_n:=\operatorname{diag}(w_1,\ldots,w_n)$. Let $X$, $Y$, and $Z$ have rows $x_i$, $y_t$, and $z_t$, respectively. Then $P_n$ has the same distribution as $Q_n$, and the observations satisfy
\[
Y=P_nD_nX+Z,
\]
with $X,Y,Z\in\mathbb R^{n\times m}$. 

Participation matrices of this type arise in federated learning under fixed size client sampling and weighted aggregation \cite{li2020fedavg,fraboni2021clustered}. Related multi-round reconstruction problems have been studied in \cite{lam2021gradient,so2023securing}. Our result quantifies the worst-case stability of the corresponding dense square model.

Once $P_n$ and $D_n$ are known, define $\widehat X=(P_nD_n)^{-1}Y$ whenever $P_nD_n$ is invertible. We define the worst-case noise amplification factor, equivalently the absolute condition number of the solution map with respect to perturbations in the observations, by
\[
\mathcal{A}_n
:=\sup_{Z\in\mathbb R^{n\times m}\setminus\{0\}}
\frac{\|\widehat X-X\|_{\mathrm{HS}}}{\|Z\|_{\mathrm{HS}}}
=\|(P_nD_n)^{-1}\|
=\frac{1}{s_n(P_nD_n)},
\]
with the convention that $\mathcal{A}_n=\infty$ when $P_nD_n$ is singular. Here $P_n$, $D_n$, and $X$ are fixed and the supremum is over the noise $Z$. So $\mathcal{A}_n$ is the worst case noise amplification factor (see e.g., \cite{arioli2007partial,baboulin2009conditioning})
\[
\|\widehat X-X\|_{\mathrm{HS}}
\le \mathcal A_n\|Z\|_{\mathrm{HS}}.
\]
Let
\[
H_n:=\left(\sum_{i=1}^n w_i^{-2}\right)^{1/2}
\qquad\text{and}\qquad
r_n:=\frac{\sum_{i=1}^n w_i^{-2}}
{\max_{i\in[n]}w_i^{-2}}.
\]
\begin{corollary}\label{corollary participation stability}
In the model above, assume that all weights $w_i$ are deterministic and positive. There are constants $C,c>0$, depending only on $p$, such that, for every $0<\varepsilon\le1$,
\[
\mathbb{P}\left(\mathcal{A}_n\ge\frac{H_n}{\varepsilon}\right)
\le C\varepsilon+2e^{-cn}
\]
and
\[
\mathbb{P}\left(\mathcal{A}_n<\varepsilon^2H_n\right)
\le C\left(\varepsilon+\frac{1}{\sqrt d}+e^{-cr_n}\right).
\]
\end{corollary}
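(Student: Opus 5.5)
The corollary follows directly from Theorems~\ref{theorem 1.1} and~\ref{theorem 1.2} with $M=D_n$. First we identify the quantities. Since $P_n$ has the same distribution as $Q_n$, $\mathcal A_n=1/s_n(P_nD_n)$ has the same law as $1/s_n(Q_nD_n)$. The matrix $D_n=\operatorname{diag}(w_1,\dots,w_n)$ is deterministic and invertible because all $w_i>0$, and its inverse is $D_n^{-1}=\operatorname{diag}(w_1^{-1},\dots,w_n^{-1})$. Hence
\[
\|D_n^{-1}\|_{\mathrm{HS}}=\Bigl(\sum_{i=1}^n w_i^{-2}\Bigr)^{1/2}=H_n,
\qquad
\|D_n^{-1}\|^2=\max_{i\in[n]}w_i^{-2},
\qquad
\operatorname{sr}(D_n^{-1})=r_n .
\]

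For the first inequality, note that
\[
\mathcal A_n\ge H_n/\varepsilon
\iff s_n(P_nD_n)\le \varepsilon/H_n=\varepsilon/\|D_n^{-1}\|_{\mathrm{HS}}.
\]
This equivalence also covers the singular case, where $\mathcal A_n=\infty$ and $s_n=0$. Applying Theorem~\ref{theorem 1.1} with $M=D_n$ gives the bound $C\varepsilon+2e^{-cn}$.

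For the second inequality, we have
\[
\mathcal A_n<\varepsilon^2H_n
\iff s_n(P_nD_n)>1/(\varepsilon^2H_n)=1/(\varepsilon^2\|D_n^{-1}\|_{\mathrm{HS}}).
\]
Here $\mathcal A_n<\infty$ forces invertibility, so $\mathcal A_n=1/s_n$ is legitimate. Applying Theorem~\ref{theorem 1.2} with $M=D_n$ gives $C(\varepsilon+d^{-1/2}+e^{-c\,r_n})$.

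Finally, take $C$ to be the maximum of $C_{\ref{theorem 1.1}}$ and $C_{\ref{theorem 1.2}}$, and $c$ to be the minimum of $c_{\ref{theorem 1.1}}$ and $c_{\ref{theorem 1.2}}$. Both depend only on $p$.

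There is no real obstacle. The only points needing care are the equality in distribution between $P_n$ and $Q_n$, which holds because the rows are independent and uniform over $d$-subsets, and the handling of singular $P_nD_n$ through the convention $\mathcal A_n=\infty$.
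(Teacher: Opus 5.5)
Your proposal is correct and follows the paper's own argument: substitute $M=D_n$ into Theorems~\ref{theorem 1.1} and~\ref{theorem 1.2}, using $\|D_n^{-1}\|_{\mathrm{HS}}=H_n$ and $\operatorname{sr}(D_n^{-1})=r_n$. Your explicit handling of the event equivalences, including the singular case via $\mathcal A_n=\infty$, and your choice of combined constants are details the paper leaves implicit.
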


\begin{proof}
 Note that
\[
\|D_n^{-1}\|_{\mathrm{HS}}=H_n, \qquad
\operatorname{sr}(D_n^{-1})=r_n.
\]
The two probability estimates therefore follow from Theorems \ref{theorem 1.1} and \ref{theorem 1.2} with $M_n=D_n$
\end{proof}

\begin{remark}
If $r_n\to\infty$, Corollary~\ref{corollary participation stability} yields
\[
\mathcal{A}_n=\Theta_{\mathbb P}(H_n).
\]
In particular, for the unweighted model $D_n=\mathbf I_n$,
\[
\mathcal{A}_n=\Theta_{\mathbb P}(\sqrt n).
\]
Thus, while invertibility of $P_nD_n$ guarantees noiseless identifiability, its least singular value controls the worst-case amplification of perturbations. Under the condition above, this amplification is typically of order $H_n$.
\end{remark}

\subsection{Outline of the proof }
The proof of Theorem \ref{theorem 1.1} follows the invertibility-via-distance approach of Rudelson and Vershynin \cite{thelittle} and the strategy of Letwin and Polavarapu \cite[Theorem 4.4]{letwin2026smallest}. 

The proof of Theorem \ref{theorem 1.2} is based on a random permutation argument. By introducing an independent uniform permutation matrix $\Pi$, we reduce the problem to estimating the lower tail of $\|M^{-1}\Pi v\|_2$ for non-almost-constant vectors $v$. Since the coordinates of $\Pi v$ are dependent, the Hanson-Wright inequality cannot be applied directly. To overcome this difficulty, we construct a weighted matching and introduce independent Rademacher swaps along the matched pairs. The matching captures a substantial portion of the relevant Hilbert-Schmidt norm, while the Rademacher swaps preserve the uniform permutation law and introduce sufficient independent randomness for an application of the Hanson-Wright inequality. This yields a lower bound for the permuted norm, with the stable-rank term arising naturally from the resulting bound. Combining this estimate with the known upper bound for $s_n(Q_n)$ in \cite{lianupper}, this completes the proof.
\subsection{Organization}
The remainder of the paper is organized as follows. Section 2 collects the notation and preliminary results. In Section 3, we prove the lower bound in Theorem \ref{theorem 1.1} using the invertibility-via-distance approach and the average projection estimate. In Section 4, we develop the random permutation argument and prove the upper bound in Theorem \ref{theorem 1.2}.

\section{Preliminaries}\label{sec2}

\subsection{Notation}
 We write $[n]:=\{1,2,\dots, n\}$ and denote by $S_n$ the symmetric group on $[n]$, that is, the set of all permutations of $[n]$. For $\pi\in S_n$, let $\Pi$ denote the corresponding permutation matrix, which is defined by 
\[
\Pi e_i=e_{\pi(i)},\quad i\in[n],
\]where $\{e_i\}_{
i=1}^n$ is the standard basis of $\R^n$. The standard inner product and Euclidean norm are denoted by $\langle\cdot,\cdot\rangle$ and $\norm{\cdot}_2$, respectively.
The vector $\mathbf 1_n\in \mathbb R^n$ denotes the vector with each coordinate equal to $1$, and $\mathbf{I}_n$ denotes the $n\times n$ identity matrix. We write \[\mathbf 1_n^\perp:=\{v\in \mathbb{R}^n: \langle v, \mathbf 1_n\rangle=0\}, \quad P_{\mathbf 1_n^\perp}:=\mathbf I_n-\frac1n \mathbf 1_n\mathbf 1_n^T, \quad P_{\mathbf{1}_n}:=\frac{1}{n}\mathbf 1_n\mathbf 1_n^T.\] 
Here $P_{\mathbf 1_n^\perp}$ and $P_{\mathbf{1}_n}$ are the orthogonal projections onto $\mathbf 1_n^\perp$ and $\mathrm{span}\{\mathbf 1_n\}$, respectively.
We denote the unit sphere in $\R^n$ by $\bS^{n-1}:=\{x\in \mathbb R^n: \|x\|_2=1\}$ and the Euclidean unit ball by $B_2^n$. The distance from a vector $v$ to a subspace $H\subset \mathbb R^n$ is defined as \[\operatorname{dist}(v,H):=\inf_{u\in H}\norm{v-u}_2.\]
For a real matrix $A=(a_{ij})\in\mathbb R^{m\times k}$, its operator and Hilbert--Schmidt norms are
\[
\|A\|:=\sup_{x\in\mathbb S^{k-1}}\|Ax\|_2,
\qquad
\|A\|_{\mathrm{HS}}:=\left(\sum_{i=1}^{m}\sum_{j=1}^{k}a_{ij}^2\right)^{1/2}.
\]
For a nonzero subspace $H\subset\mathbb R^k$, we write $A|_H$ for the restriction of $A$ to $H$ and define
\[
\|A|_H\|:=\sup_{\substack{x\in H\\ \|x\|_2=1}}\|Ax\|_2.
\]
In particular, when $k=n$,
\[
\|A|_{\mathbf 1_n^\perp}\|=\|AP_{\mathbf 1_n^\perp}\|.
\]
For $A\ne0$, its stable rank is $\operatorname{sr}(A):=\|A\|_{\mathrm{HS}}^2/\|A\|^2$. The transpose and inverse of $A$ are denoted by $A^T$ and $A^{-1}$, whenever the latter exists. For symmetric matrices $A,B$, we write $B\preceq A$ if $A-B$ is positive semidefinite, and $B\prec A$ if it is positive definite.

For a deterministic real sequence $a_n$ and a positive deterministic sequence $b_n$, we write $a_n=O(b_n)$ if $|a_n|\le Cb_n$ for some constant $C>0$ and all sufficiently large $n$, and $a_n=o(b_n)$ if $|a_n|/b_n\to0$. We write $a_n\sim b_n$ if $a_n/b_n\to1$. The subscript in $o_n(1)$ emphasizes that the limit is taken as $n\to\infty$. For positive deterministic sequences $a_n,b_n$, the notation $a_n\asymp b_n$ means that their ratio is bounded above and below by positive constants. For nonnegative random variables $X_n$, we write $X_n=\Theta_{\mathbb P}(a_n)$ if, for every $\eta>0$, there are $0<c_\eta<C_\eta<\infty$ such that
\[
\mathbb P(c_\eta a_n\le X_n\le C_\eta a_n)\ge1-\eta
\]
for all sufficiently large $n$. 
Finally, $C_0,c_0,C_1,c_1,\dots$ denote positive constants, possibly depending on the parameter $p$, whose values may change from line to line.

\subsection{Decomposition of the unit sphere}
We follow the compressible/incompressible decomposition \cite{thelittle}. Compressible vectors are close to sparse vectors, whereas incompressible vectors are necessarily spread over a substantial number of coordinates. Related decompositions appeared in earlier work on the smallest singular value of random matrices in \cite{litvak2005smallest} (see also \cite{litvak2004random}).

Fix $\delta,\rho\in(0,1)$ whose values will be chosen later and define the sets of sparse, compressible and incompressible vectors as follows:
\begin{align*}
    \operatorname{Sparse}(\delta)&:=\{v \in \mathbb{S}^{n-1}:|\operatorname{supp}(v)|\le\delta n\},\\
    \operatorname{Comp}(\delta,\rho)&:=\{v\in \mathbb{S
}^{n-1}:\operatorname{dist}(v,\operatorname{Sparse(\delta)})\le \rho\},\\
\operatorname{Incomp}(\delta,\rho)&:=\mathbb{S}^{n-1}\setminus\operatorname{Comp}(\delta,\rho).
\end{align*}
Another decomposition used in this paper splits the unit sphere into almost-constant and non-almost-constant vectors. This approach is particularly well suited to models with combinatorial dependence. It originates in Cook's work on random regular digraphs \cite{cookregular} and has been further developed and employed in \cite{tran,jaindiscrete,lianupper,yusigned}.

\begin{definition}
Let $\delta, \rho \in (0,1)$. We define $\operatorname{Cons}(\delta, \rho)$ as the set of vectors $v \in \bS^{n-1}$ such that $|v_i - \lambda| \le \rho/\sqrt{n}$ holds for some $\lambda \in \R$ and for at least $(1-\delta)n$ coordinates $i \in [n]$. A vector $v \in \bS^{n-1}$ is called non-almost-constant if  $v \in \bS^{n-1}\setminus \operatorname{Cons}(\delta, \rho)$.
\end{definition}
The following lemma from \cite[Lemma 2.2]{tran} shows that every non-almost-constant vector contains two large sets of coordinates that are uniformly separated.
\begin{lemma}\label{nonconstanseparated}
Let $\delta,\rho\in(0,1)$. Then for any vector
$v\in \mathbb{S}^{n-1}\setminus \operatorname{Cons}(\delta,\rho)$,
there are disjoint subsets
$\sigma_1=\sigma_1(v)$ and $\sigma_2=\sigma_2(v)$ of $[n]$ such that
\[
|\sigma_1|,|\sigma_2|\ge \frac{\delta n}{8},
\qquad \text{and} \qquad
\frac{\rho}{\sqrt{2n}}
\le |v_i-v_j|
\le \frac{6}{\sqrt{\delta n}}
\quad
\forall i\in \sigma_1,\ \forall j\in \sigma_2.
\]
\end{lemma}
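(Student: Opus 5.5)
The plan is a direct counting argument: discard the few large coordinates of $v$, then take $\sigma_1$ and $\sigma_2$ to be the lowest and highest $\lceil \delta n/8\rceil$ of the remaining coordinates, ordered by value. The upper bound on $|v_i-v_j|$ comes from the truncation. The lower bound comes from the fact that $v\notin\operatorname{Cons}(\delta,\rho)$.

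\emph{Truncation.} Set $J:=\{i\in[n]: |v_i|\le 2/\sqrt{\delta n}\}$. Since $\|v\|_2=1$, Chebyshev-type counting gives
\[
|[n]\setminus J|\cdot \frac{4}{\delta n}\le \sum_i v_i^2=1,
\]
so $|J|\ge (1-\delta/4)n$. For any $i,j\in J$ we have $|v_i-v_j|\le 4/\sqrt{\delta n}\le 6/\sqrt{\delta n}$. The upper bound therefore holds automatically for any $\sigma_1,\sigma_2\subset J$.

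\emph{Choice of the sets.} Order the coordinates of $J$ so that $v_{i_1}\le v_{i_2}\le\cdots\le v_{i_{|J|}}$, and let $k:=\lceil \delta n/8\rceil$. Since $|J|\ge (1-\delta/4)n$, we have $2k\le |J|$, and we set
\[
\sigma_1:=\{i_1,\dots,i_k\},\qquad \sigma_2:=\{i_{|J|-k+1},\dots,i_{|J|}\}.
\]
These are disjoint sets of size at least $\delta n/8$. Let $a:=v_{i_k}=\max_{\sigma_1}v$ and $b:=v_{i_{|J|-k+1}}=\min_{\sigma_2}v$. Then $v_j-v_i\ge b-a$ for all $i\in\sigma_1$ and $j\in\sigma_2$, so it suffices to show $b-a\ge \rho/\sqrt{2n}$.

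\emph{Contradiction step.} Suppose instead that $b-a<\rho/\sqrt{2n}$, and put $\lambda:=(a+b)/2$. Every coordinate $i_m$ with $k\le m\le |J|-k+1$ has $v_{i_m}\in[a,b]$, hence
\[
|v_{i_m}-\lambda|\le \frac{\rho}{2\sqrt{2n}}<\frac{\rho}{\sqrt n}.
\]
The number of such coordinates is at least $|J|-2k+2\ge (1-\delta/4)n-\delta n/4=(1-\delta/2)n\ge(1-\delta)n$, using $2\lceil\delta n/8\rceil\le \delta n/4+2$. This says exactly that $v\in\operatorname{Cons}(\delta,\rho)$, which is a contradiction.

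The main obstacle is only integer rounding: checking that $2k\le|J|$ and that the middle block still has at least $(1-\delta)n$ elements. Both hold because the bound $|J|-2k+2\ge(1-\delta/2)n$ leaves slack $\delta n/2$ below the required count. In the degenerate regime $\delta n<8$ the size requirement is simply $|\sigma_i|\ge 1$, and the same argument applies verbatim with $k=1$.
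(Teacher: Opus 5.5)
Your proof is correct and complete. The paper does not prove this lemma; it quotes it from \cite[Lemma 2.2]{tran}. So there is no in-paper argument to compare with, and your order-statistics proof works as a short self-contained replacement for that citation.

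The route is sound. You truncate to $J=\{i:|v_i|\le 2/\sqrt{\delta n}\}$, take the lowest and highest $k=\lceil \delta n/8\rceil$ coordinates of $J$, and note that a small gap $b-a$ would place more than $(1-\delta/2)n$ middle coordinates within $(b-a)/2$ of $\lambda=(a+b)/2$.

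In fact the argument gives more than is stated. Membership in $\operatorname{Cons}(\delta,\rho)$ only requires $|v_i-\lambda|\le\rho/\sqrt n$, so you obtain $|v_i-v_j|>2\rho/\sqrt n$ and $|v_i-v_j|\le 4/\sqrt{\delta n}$. The constants in the lemma therefore have slack.

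The one point to tidy is the rounding claim $2k\le|J|$. From $|J|\ge(1-\delta/4)n$ and $2k<\delta n/4+2$ you get $|J|-2k>(1-\delta/2)n-2>-1$, which yields $2k\le |J|$ for $n\ge 2$. For $n=1$ your remark that the argument applies ``verbatim with $k=1$'' fails, since $|J|=1<2k$. There, however, the lemma is vacuous: every $v\in\mathbb{S}^{0}$ is almost constant, by taking $\lambda=v_1$.
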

\subsection{Invertibility over compressible and almost-constant vectors}
For $p=1/2$ Tran \cite[Proposition 2.22]{tran} established a probability estimate on the set of compressible vectors. Since the argument remains valid for every fixed $p\in(0,1/2]$, the same proof yields the following estimate.

\begin{lemma}\label{invertibility over compressible}
There exist constants $\delta_{\ref{invertibility over compressible}},\rho_{\ref{invertibility over compressible}}\in(0,1),c_{\ref{invertibility over compressible}},C_{\ref{invertibility over compressible}}$ depending only on $p$ such that
\[
\mathbb{P}\left(\inf_{x\in\operatorname{Comp}(\delta_{\ref{invertibility over compressible}},\rho_{\ref{invertibility over compressible}})}\|Q_n^{T}x\|_2\le C_{\ref{invertibility over compressible}}\sqrt{n}\right)\le 2 e^{-c_{\ref{invertibility over compressible}}n}.
\]
\end{lemma}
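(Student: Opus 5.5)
The plan is to follow Tran's argument for $p=1/2$ and check that each step only uses $p\in(0,1/2]$ through constants. Combine a single-vector small-ball estimate for $\|Q_n^Tx\|_2$ with a net argument over compressible vectors, and handle the almost-constant direction separately.

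\emph{Step 1: split off the $\mathbf 1_n$ direction.} Write $x=\alpha \frac{\mathbf 1_n}{\sqrt n}+y$ with $y\in\mathbf 1_n^\perp$. Since every row of $Q_n$ has sum $d$, we have $\mathbb E Q_n^T=\frac{d}{n}\mathbf 1_n\mathbf 1_n^T$, and hence
\[
Q_n^Tx=\alpha\frac{d}{\sqrt n}\mathbf 1_n+(Q_n-\mathbb EQ_n)^T x .
\]
With probability $1-e^{-cn}$ the centered matrix has $\|Q_n-\mathbb EQ_n\|\le C_0\sqrt n$. Its rows are independent, bounded and isotropic up to constants, so this follows from standard results for matrices with independent subgaussian rows. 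The first term has norm $|\alpha| d=|\alpha|pn$. Therefore, if $|\alpha|\ge 2C_0/(p\sqrt n)\cdot\sqrt n$, say $|\alpha|\ge \alpha_0$ for a constant $\alpha_0$ depending on $p$, then $\|Q_n^Tx\|_2\ge |\alpha|pn-C_0\sqrt n\ge c\,n$. It therefore suffices to treat compressible $x$ whose projection $y$ onto $\mathbf 1_n^\perp$ has $\|y\|_2\ge c_1$. Such $x$ remain close to sparse vectors after renormalization, up to adjusting $\delta,\rho$.

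\emph{Step 2: single-vector estimate.} Fix $x\in\mathbb S^{n-1}$ with $\|P_{\mathbf 1_n^\perp}x\|_2\ge c_1$. The coordinates are $(Q_n^Tx)_j=\sum_i q_{ij}x_i$. I plan to pass to the row form $\|Q_n^Tx\|_2^2=\sum_j\langle \text{col}_j,x\rangle^2$, which has dependent columns, so a direct tensorization is not available. Instead, bound $\|Q_n^Tx\|_2^2\ge\sum_j$ over a well-chosen set. Tran's route is to use the rows: $Q_n^T x=\sum_i x_i R_i$, where the rows $R_i$ are independent uniform $d$-subsets.

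Decompose $x$ into large and small coordinates. Independence of the rows then lets us condition on all rows outside a set $\sigma$ of $\asymp \delta n$ indices with comparable $|x_i|$. The vector $\sum_{i\in\sigma}x_iR_i$ then has coordinates that behave like sums of independent centered bounded variables, each of variance $\asymp p(1-p)x_i^2$, after centering by $\frac dn\sum x_i$. A Paley--Zygmund or Lévy concentration estimate coordinatewise, combined with tensorization over the $n$ columns, gives
\[
\mathbb P\big(\|Q_n^Tx\|_2\le c_2\sqrt n\big)\le e^{-c_3 n}.
\]
To handle the column dependence, I would use negative association of the entries $q_{ij}$ within a row, or a Poissonization/swap coupling.

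\emph{Step 3: net argument.} Take a $\rho$-net $\mathcal N$ of $\operatorname{Comp}(\delta,\rho)$ of size at most $\binom{n}{\delta n}(3/\rho)^{\delta n}\le e^{C\delta\log(e/(\delta\rho))n}$. On the event $\|Q_n\|\le C_4 n$, where the operator norm is dominated by $d$ because of the $\mathbf 1_n$ direction, approximation costs $C_4 n\rho$. This is too big compared to $\sqrt n$, and this is the main obstacle.

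The remedy is to approximate separately. The $\mathbf 1_n$ component is handled exactly by Step 1. The perturbation is measured in $\mathbf 1_n^\perp$, where $\|Q_n^T P_{\mathbf 1_n^\perp}\|\le C_0\sqrt n$. The net is therefore built for the $\mathbf 1_n^\perp$ part at mesh $\rho$, with $\rho C_0\le c_2/2$. We then choose $\delta$ small so that $e^{C\delta\log(e/\delta\rho)n}e^{-c_3n}\le e^{-c_3n/2}$.

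A union bound, together with the $e^{-cn}$ failure probability of the norm bound, yields the claim with $C_{\ref{invertibility over compressible}}=c_2/2$ and total failure probability $2e^{-cn}$.
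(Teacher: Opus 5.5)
The paper gives no argument of its own for this lemma. It imports Tran's Proposition~2.22 (proved for $p=1/2$) and asserts that the proof goes through for every fixed $p\in(0,1/2]$. Your outline tries to reconstruct such a proof, and as written it has a genuine gap at the step that carries all the probability.

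\textbf{The gap in Step~2.} The single-vector bound $\mathbb P(\|Q_n^Tx\|_2\le c_2\sqrt n)\le e^{-c_3 n}$ is true (it follows from the argument below), but your sketch does not establish it. The coordinates $Y_j=(Q_n^Tx)_j$ are dependent across $j$ because every row has exactly $d$ ones, and neither tool you name removes this dependence.
\begin{itemize}
\item Negative association guarantees $\mathbb E\prod_j f_j\le\prod_j\mathbb E f_j$ only for functions of disjoint blocks (here the columns of $Q_n$) that are all monotone in the same direction. Tensorizing a small-ball bound needs $f_j=e^{-\lambda Y_j^2}$ or $\mathbf 1_{\{|Y_j|\le t\}}$, which are not monotone once $x$ has entries of both signs or $Y_j$ is centered.
\item Naive Poissonization (i.i.d.\ Bernoulli$(p)$ rows conditioned on their sums) costs a factor $\mathbb P(\mathrm{Bin}(n,p)=d)^{-1}\asymp\sqrt n$ per row. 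Over the $\lfloor\delta n\rfloor$ rows that a sparse net point involves, this is $e^{\Omega(\delta n\log n)}$, which swamps $e^{-cn}$.
\end{itemize}

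\textbf{The gap in Step~3.} A finite net cannot match the $\mathbf 1_n$-coefficient of $x$ exactly. The $\mathbf 1_n$-component of $x-x'$ can have size up to $\rho$, and $Q_n^T$ maps it to a vector of norm at least about $\rho pn\gg\sqrt n$, since $\langle Q_n^T\mathbf 1_n,\mathbf 1_n\rangle/n=d$. Your bound on $\|Q_n^TP_{\mathbf 1_n^\perp}\|$ (projection on the right) controls only the $\mathbf 1_n^\perp$-component of the perturbation. Step~1 only disposes of $x$ with a large $\mathbf 1_n$-coefficient, not of this approximation error.

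\textbf{The missing idea: project on the left.} Since $Q_n\mathbf 1_n=d\mathbf 1_n$ exactly,
\[
P_{\mathbf 1_n^\perp}Q_n^T=Q_n^T-\tfrac1n\mathbf 1_n(Q_n\mathbf 1_n)^T=Q_n^T-p\mathbf 1_n\mathbf 1_n^T=(Q_n-\mathbb EQ_n)^T=:B .
\]
Hence $\|Q_n^Tx\|_2\ge\|Bx\|_2$ for all $x\in\mathbb R^n$. Also $\|B\|\le C_0\sqrt n$ with probability at least $1-2e^{-cn}$, by the standard bound for matrices with independent subgaussian rows (cf.\ \cite{lianupper}). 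The $\mathbf 1_n$-direction then never needs separate treatment, and a net perturbation costs only $C_0\rho\sqrt n$.

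The columns $X_i-p\mathbf 1_n$ of $B$ have the following properties:
\begin{itemize}
\item they are independent and lie in $\mathbf 1_n^\perp$;
\item they have covariance $\frac{p(1-p)n}{n-1}P_{\mathbf 1_n^\perp}$ and Euclidean norm exactly $\sqrt{p(1-p)n}$;
\item they are subgaussian. Here negative association does work: for a unit $y=y^+-y^-$, the functions $e^{\lambda y_j^{\pm}((X_i)_j-p)}$ are monotone, so Cauchy--Schwarz reduces the estimate to independent Bernoulli sums.
\end{itemize}

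Consequently, for each $S\subset[n]$ with $|S|=\lfloor\delta n\rfloor$, the tall $n\times|S|$ matrix $B_S$ satisfies $s_{\min}(B_S)\ge c\sqrt n$ with probability $1-2e^{-c'n}$. The constants $c,c'$ depend only on $p$, not on $\delta$. This is the standard bound for matrices with independent isotropic subgaussian columns of fixed norm, and it replaces your coordinatewise tensorization entirely.

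To finish, take a union bound over the $\binom{n}{\lfloor\delta n\rfloor}\le(e/\delta)^{\delta n}$ supports, with $\delta$ small. Then use $\|Bx\|_2\ge\|Bz\|_2-C_0\rho\sqrt n$ for $z\in\operatorname{Sparse}(\delta)$ with $\|x-z\|_2\le\rho$, and $\rho$ small. This proves the lemma with no separate Step~1.
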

By applying Lemma \ref{invertibility over compressible}, we obtain the following desired probability estimate.
\begin{proposition}\label{compressible estimate}
    There exist constants $\delta_{\ref{compressible estimate}},\rho_{\ref{compressible estimate}}\in(0,1), C_{\ref{compressible estimate}}, c_{\ref{compressible estimate}}$ depending only on $p$ such that
    \[
    \mathbb{P}\left(\inf_{x\in\operatorname{Comp}(\delta_{\ref{compressible estimate}},\rho_{\ref{compressible estimate}})}\|M^TQ_n^Tx\|_2 \le \frac{C_{\ref{compressible estimate}} \sqrt{n}}{\|M^{-1}\|_{\mathrm{HS}}} \right)\le 2e^{-c_{\ref{compressible estimate}}n}.
    \]
\end{proposition}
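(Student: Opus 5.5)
Take $\delta_{\ref{compressible estimate}}=\delta_{\ref{invertibility over compressible}}$ and $\rho_{\ref{compressible estimate}}=\rho_{\ref{invertibility over compressible}}$. The plan is to reduce the statement to Lemma~\ref{invertibility over compressible} through a deterministic inequality, applied on the complement of the exceptional event of that lemma. The key fact is that for every $y\in\mathbb R^n$,
\[
\|M^Ty\|_2\ \ge\ s_n(M^T)\,\|y\|_2=\frac{\|y\|_2}{\|M^{-1}\|}.
\]
This holds because $s_n(M^T)=s_n(M)=\|M^{-1}\|^{-1}$. Moreover, $\|M^{-1}\|\le\|M^{-1}\|_{\mathrm{HS}}$, since the operator norm is dominated by the Hilbert--Schmidt norm. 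Hence $\|M^Ty\|_2\ge \|y\|_2/\|M^{-1}\|_{\mathrm{HS}}$ for every $y$.

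Next, apply this with $y=Q_n^Tx$ for $x\in\operatorname{Comp}(\delta_{\ref{invertibility over compressible}},\rho_{\ref{invertibility over compressible}})$. Taking infima gives
\[
\inf_{x\in\operatorname{Comp}}\|M^TQ_n^Tx\|_2\ \ge\ \frac{1}{\|M^{-1}\|_{\mathrm{HS}}}\inf_{x\in\operatorname{Comp}}\|Q_n^Tx\|_2 .
\]
Therefore the event in the proposition, with constant $C_{\ref{compressible estimate}}:=C_{\ref{invertibility over compressible}}$, is contained in the event $\{\inf_{x\in\operatorname{Comp}}\|Q_n^Tx\|_2\le C_{\ref{invertibility over compressible}}\sqrt n\}$. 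By Lemma~\ref{invertibility over compressible}, the latter event has probability at most $2e^{-c_{\ref{invertibility over compressible}}n}$. This gives the claim with $c_{\ref{compressible estimate}}=c_{\ref{invertibility over compressible}}$.

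The argument has no real obstacle. The only point to check is the direction of the norm comparison: we need the lower bound $s_n(M)\ge \|M^{-1}\|_{\mathrm{HS}}^{-1}$, which is exactly what the weaker inequality $\|M^{-1}\|\le \|M^{-1}\|_{\mathrm{HS}}$ provides. No sharper comparison is needed, because the compressible bound holds at scale $\sqrt n$, far above the scale required. Note also that the resulting constants depend only on $p$, as the statement requires, since those in Lemma~\ref{invertibility over compressible} do.
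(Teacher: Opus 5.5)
Your proposal is correct and matches the paper's proof: you take the same constants as in Lemma~\ref{invertibility over compressible}, use the pointwise bound $\|M^Ty\|_2\ge \|y\|_2/\|M^{-1}\|\ge \|y\|_2/\|M^{-1}\|_{\mathrm{HS}}$ to obtain the event inclusion, and then apply that lemma. No changes are needed.
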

\begin{proof}
We choose $\delta_{\ref{compressible estimate}},\rho_{\ref{compressible estimate}},C_{\ref{compressible estimate}},c_{\ref{compressible estimate}}$ to coincide, respectively, with those in Lemma \ref{invertibility over compressible}. Since 
\[
\|M^T Q_n^Tx\|_2\ge s_n(M^T)\|Q_n^Tx\|_2=\frac{\|Q_n^Tx\|_2}{\|(M^{T})^{-1}\|}\ge \frac{\|Q_n^Tx\|_2}{\|M^{-1}\|_{\mathrm{HS}}},
\]
we have 
\[
\left\{\inf_{x\in\operatorname{Comp}(\delta_{\ref{compressible estimate}},\rho_{\ref{compressible estimate}})}\|M^TQ_n^Tx\|_2 \le \frac{C_{\ref{compressible estimate}} \sqrt{n}}{\|M^{-1}\|_{\mathrm{HS}}} \right\}\subseteq \left\{\inf_{x\in\operatorname{Comp}(\delta_{\ref{invertibility over compressible}},\rho_{\ref{invertibility over compressible}})}\|Q_n^{T}x\|_2\le C_{\ref{invertibility over compressible}}\sqrt{n}\right\}.
\]
By using Lemma \ref{invertibility over compressible}, we can complete the proof.
\end{proof}
The following proposition provides an invertibility estimate for $Q_n$ on the set of almost-constant vectors (see \cite[Proposition 3.1]{lianupper}). 
\begin{proposition}\label{constant estimate}
Let $p\in(0,1/2]$ be a fixed constant. Let $d,n$ be large enough positive integers such that $d=pn$. There exist constants $C_{\ref{constant estimate}},c_{\ref{constant estimate}},\delta_{\ref{constant estimate}}, \rho_{\ref{constant estimate}}\in(0,1)$ depending only on $p$ such that the following holds. Let $M$ be a $m \times n$ random matrix, $ n/2\le m\le n$, whose rows are independent random vectors drawn uniformly from the set of $n$-dimensional $0/1$ vectors having exactly $d$ ones. Then
\[
\mathbb P\left(\inf_{v\in \operatorname{Cons}(\delta_{\ref{constant estimate}},\rho_{\ref{constant estimate}})}\|{M v}\|_2\le C_{\ref{constant estimate}}\sqrt{pn}\right)\le e^{-c_{\ref{constant estimate}}n}.
\]
\end{proposition}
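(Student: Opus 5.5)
The natural approach is to split off the direction $\mathbf 1_n$. The expectation $\mathbb E M=\frac dn\mathbf 1_m\mathbf 1_n^T$ has operator norm of order $d\sqrt{m/n}\asymp pn$, which is huge compared with the target $\sqrt{pn}$. The centred part $W:=M-\mathbb E M$ should instead have norm $O(\sqrt n)$ with overwhelming probability. So the plan is to use $\mathbb E M$ to push $\|Mv\|_2$ up when $v$ has a large component along $\mathbf 1_n$, and to use anti-concentration of the rows otherwise.

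\textbf{Step 1 (norm of the centred matrix).} I would first establish, or quote from the combinatorial-matrix literature, that $\mathbb P(\|W\|\ge C_0\sqrt n)\le e^{-cn}$. The rows of $W$ are independent, centred and bounded. For fixed $x\in\mathbb S^{n-1}$, the variable $\langle R_i-\frac dn\mathbf 1_n,x\rangle$ is subgaussian with constant $O(1)$: sampling without replacement obeys Hoeffding's bound, or one can use concentration on the slice. A standard $1/2$-net argument on $\mathbb S^{n-1}\times\mathbb S^{m-1}$ then gives the claim.

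\textbf{Step 2 (large mean component).} Write $v=\frac{a}{\sqrt n}\mathbf 1_n+u$ with $u\perp\mathbf 1_n$, so that $a^2+\|u\|_2^2=1$. Since $\mathbb E M u=0$, we have
\[
Mv=\frac{ad}{\sqrt n}\mathbf 1_m+Wu ,
\]
and hence on the event of Step 1,
\[
\|Mv\|_2\ge |a|\,d\sqrt{m/n}-C_0\sqrt n\ge |a|\,pn/\sqrt2-C_0\sqrt n .
\]
This exceeds $C\sqrt{pn}$ as soon as $|a|\ge a_0$ for a suitable $a_0=a_0(p)$ and $n$ large. Note that this case does not use almost-constancy at all.

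\textbf{Step 3 (small mean component).} Now suppose $v\in\operatorname{Cons}(\delta,\rho)$ and $|a|<a_0$.
\begin{itemize}
\item \emph{Structure of $v$.} The almost-constant value $\lambda$ satisfies $|\lambda|\sqrt n\lesssim |a|+\rho+\sqrt\delta$, since the average of $v$ is $a/\sqrt n$ and at most $\delta n$ coordinates deviate. With $a_0,\rho,\delta$ small, $v$ therefore has $\ell_2$ mass at least $1/2$ on a set $J$ of at most $\delta n$ coordinates, and its remaining coordinates are $O\big((a_0+\rho+\sqrt\delta)/\sqrt n\big)$. In particular $v$ lies within distance $1/4$ of $\operatorname{Sparse}(\delta)$.
\item \emph{Single-vector bound.} For such a fixed $v$, each $\langle R_i,v\rangle$ is anti-concentrated at scale $\gtrsim 1$: swapping a one and a zero of $R_i$ between a large coordinate in $J$ and a small coordinate changes $\langle R_i,v\rangle$ by order one. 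This is the usual random-transposition argument for the uniform $d$-subset. Tensorizing over the $m\ge n/2$ independent rows gives $\mathbb P(\|Mv\|_2\le c_1\sqrt n)\le e^{-c_2 n}$.
\item \emph{Net.} Take a net of this family built from a $\delta n$-sparse part (at most $\binom n{\delta n}(C/\eta)^{\delta n}$ points) together with the scalar $a$ on a grid. The approximation error is controlled by
\[
\|M(v-v')\|_2\le \frac{d\sqrt m}{\sqrt n}\,|a-a'|+\|W\|\,\|u-u'\|_2 ,
\]
using Step 1 for the second term, and by choosing the grid in $a$ with mesh $n^{-1/2}$. With $\delta$ small relative to $c_2$, the union bound closes.
\end{itemize}
Since $c_1\sqrt n\ge C\sqrt{pn}$ for an appropriate $C=C(p)$, combining Steps 2 and 3 gives the proposition.

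\textbf{Main obstacle.} I expect the difficulty to be in Step 3: controlling the net error in the $\mathbf 1_n$ direction, where $M$ has norm $\asymp pn$ rather than $\sqrt n$. This is exactly why I separate the scalar $a$ and discretize it finely. A finer grid costs only a factor $\mathrm{poly}(n)$ in the union bound, which $e^{-c_2n}$ absorbs. The remaining work is the uniformity of the swap-based anti-concentration over the whole class.
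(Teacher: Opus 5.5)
The paper does not prove this proposition; it quotes it from \cite[Proposition 3.1]{lianupper}, so your argument has to stand on its own. Its architecture is the standard one and is sound: remove the $\mathbf 1_n$-direction using $\mathbb EM$, then run a net over (sparse part) $\times$ (scalar $a$) together with a single-vector bound. Step 2 is correct, and it already works for $|a|\ge K(p)/\sqrt n$, so the $a$-grid can even have bounded size. In Step 1, note that Hoeffding's range bound for sampling without replacement gives a variance proxy of order $d(\max_j x_j-\min_j x_j)^2$, which is of order $d$ for $x=e_1$, not $O(1)$. Either quote the norm bound from \cite{lianupper}, as the paper does, or use negative association of the slice.

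The step that does not work as written is the single-vector estimate, together with the constant bookkeeping around it. There are three problems.
\begin{itemize}
\item A unit $v$ with $\|v_J\|_2\ge1/2$ and $|J|\le\delta n$ need only have $\max_{j\in J}|v_j|\gtrsim(\delta n)^{-1/2}$. If $v_J$ is flat, no single swap changes $\langle R_i,v\rangle$ by order one.
\item The constants $c_1,c_2$ must not depend on $\delta$, because you afterwards take $\delta$ small relative to $c_2$.
\item Unless you choose the net inside the class, the bound is needed at points $\frac{a'}{\sqrt n}\mathbf 1_n+u'$ that are neither unit nor almost constant.
\end{itemize}
The correct input is: for every $v'$ with $\|P_{\mathbf 1_n^\perp}v'\|_2\ge1/2$, one has $\mathcal L(\langle R_i,v'\rangle,c_3)\le1-c_4$ with $c_3,c_4$ depending only on $p$. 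This follows from three facts. First, $\operatorname{Var}\langle R_i,v'\rangle=\frac{d(n-d)}{n(n-1)}\|u'\|_2^2$. Second, $\mathbb E\langle R_i,u'\rangle^4\le(p+3p^2)\|u'\|_2^4$, by Hoeffding's convex-order comparison with sampling with replacement. Third, Paley--Zygmund applied to $(\langle R_i,v'\rangle-t)^2$ gives the bound uniformly in $t$. Tensorizing over the $m\ge n/2$ rows then gives your $c_1,c_2$, depending only on $p$.

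The net mesh also needs fixing. Being within $1/4$ of $\operatorname{Sparse}(\delta)$ is irrelevant here: the error term is $\|W\|\,\|u-u'\|_2$, so you need $\|u-u'\|_2\le c_1/(4C_0)$. To get this, write $P_{\mathbf 1_n^\perp}v=P_{\mathbf 1_n^\perp}(v_J-\lambda\mathbf 1_J)+P_{\mathbf 1_n^\perp}(v_{J^c}-\lambda\mathbf 1_{J^c})$. The first term is the projection of a $\delta n$-sparse vector of norm at most $2$ (for $\delta\le1/16$), and the second has norm at most $\rho$. Then fix the constants in this order:
\begin{enumerate}
\item $C_0,c_1,c_2$, from $p$;
\item $\rho,\eta\le c_1/(8C_0)$ and $C,\theta\le c_1/4$, where $\eta$ is the sparse-net mesh and $\theta n^{-1/2}$ is the $a$-mesh;
\item $\delta$, with $\delta\log(e/\delta)+\delta\log(1+4/\eta)<c_2/2$.
\end{enumerate}
With this ordering there is no circularity, the union bound closes, and the mass-on-$J$ observation is not needed.
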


\subsection{Anti-concentration}
For a random variable $X$ taking values in $\mathbb{R}$, the Lévy concentration function is defined as 
\[
\mathcal{L}(X,t):=\sup_{u\in \mathbb{R}}\mathbb{P}(|X-u|\le t),\qquad t\ge0.
\]
The concentration function measures the maximal probability that a random variable falls into a short interval. To estimate it under combinatorial dependence, Tran \cite{tran} introduced the Combinatorial Least Common Denominator (CLCD) for the unsigned $0/1$ model, and K.Yu \cite{yusigned} subsequently developed the corresponding signed estimates for the $\{-1,0,1\}$ model. We shall use these results below.

\begin{definition}\label{definition CLCD}
For a vector $v\in \R^n$, and parameters $\alpha,\gamma>0$, define 
\[
\operatorname{CLCD}_{\alpha,\gamma}(v):=\inf\left\{\theta>0:\operatorname{dist}\left(\theta \mathrm{D}(v),\mathbb{Z}^{\binom{n}{2}}\right)<\min\left(\gamma\norm{\theta\mathrm{D}(v)}_2,\alpha\right)\right\}.
\]
Here by $\mathrm{D}(v)$ we denote the vector in $\R^{\binom{n}{2}}$ whose $(i,j)$-coordinate is $v_i-v_j$, for $1\le i<j\le n$.
\end{definition}
The CLCD controls the concentration function through the following anti-concentration estimate. In the dense regime $d=pn$, the result follows directly from \cite[Lemma 2.9]{lianupper}. For convenience, we absorb all factors depending only on $p,b$, and $\gamma$ into a constant $C$. 
\begin{lemma}\label{anticoncentration estimate}
Fix $p\in(0,1/2],b>0$, and $\gamma \in(0,1)$. There exists a constant
$C_{\ref{anticoncentration estimate}}>0$,
depending only on $p,b,\gamma$, such that the following
holds. Let $n,d\in\mathbb{N}$ satisfy $d=pn$, and let $v\in \R^n $ satisfy $\norm{\mathrm{D}(v)}_2\ge b\sqrt{n}$. Suppose that  $\eta$ is a random vector uniformly distributed over n-dimensional $0/1$ vectors having exactly $d$ ones. Then, for every $\alpha>0$ and $\varepsilon\ge0$, we have
\[
\mathcal{L}\left(\sum_{i=1}^nv_i\eta_i,\varepsilon\sqrt{p}\right)\le C_{\ref{anticoncentration estimate}}\varepsilon +\frac{C_{\ref{anticoncentration estimate}}}{\operatorname{CLCD}_{\alpha,\gamma}(v)}+C_{\ref{anticoncentration estimate}}e^{-4\alpha^2d/{n}^2}.
\]
\end{lemma}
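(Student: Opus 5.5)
The plan is to reduce the dependent sum to a sum of independent symmetric Bernoulli variables by a switching (pairing) argument, and then to run the Esseen-inequality argument of Rudelson--Vershynin with $\mathrm{D}(v)$ in place of $v$. This is the route of \cite[Lemma 2.9]{lianupper}, going back to Tran \cite{tran}. The goal is to show that, after conditioning, $\sum_i v_i\eta_i$ equals a random constant plus $\sum_{k=1}^m (v_{i_k}-v_{j_k})\xi_k$ with $m\asymp d$ and $\xi_k$ i.i.d.\ Bernoulli$(1/2)$.

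\textbf{Step 1 (pairing).} Realize $\eta$ as follows. Take a uniform random permutation $\sigma$ and set $\eta=\mathbf 1_{\sigma([d])}$. Form $m=\lfloor d/2\rfloor$ disjoint pairs $(\sigma(k),\sigma(d+k))$, $k\le m$; since $p\le1/2$ we have $d+m\le n$. Independently, for each pair flip a fair coin $\xi_k$ deciding which of the two indices is in the support. Each swap is a measure-preserving involution of the set of $d$-subsets, so the resulting vector is again uniform. Conditionally on $\sigma$,
\[
\sum_i v_i\eta_i=\text{const}+\sum_{k=1}^m w_k\xi_k,\qquad w_k=v_{\sigma(k)}-v_{\sigma(d+k)},
\]
and each $w_k$ is, marginally, a uniformly chosen coordinate of $\pm\mathrm{D}(v)$.

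\textbf{Step 2 (Fourier bound).} By Esseen's inequality and $|\cos(\pi s)|\le\exp(-c\,\operatorname{dist}(s,\mathbb Z)^2)$, for $\varepsilon>0$,
\[
\mathcal L\Big(\sum_i v_i\eta_i,\varepsilon\sqrt p\Big)\le C\int_{-1}^{1}\mathbb E_\sigma\exp\Big(-c\sum_{k=1}^m \operatorname{dist}\big(\tfrac{t}{\varepsilon\sqrt p}w_k,\mathbb Z\big)^2\Big)\,dt .
\]
Write $\theta=t/(\varepsilon\sqrt p)$.

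\textbf{Step 3 (de-randomizing the pairing; main obstacle).} The expectation over $\sigma$ must be replaced by a deterministic quantity. Its expected exponent is
\[
\frac{m}{\binom n2}\,\operatorname{dist}\big(\theta\mathrm{D}(v),\mathbb Z^{\binom n2}\big)^2\asymp \frac{d}{n^2}\operatorname{dist}\big(\theta \mathrm{D}(v),\mathbb Z^{\binom n2}\big)^2,
\]
where $\operatorname{dist}$ is applied coordinatewise. I would first try a concentration argument for sums along a random permutation, splitting $\sigma$ into blocks that are approximately independent. The summands $\operatorname{dist}(\cdot,\mathbb Z)^2\le1/4$ are bounded, so a Chernoff/Hoeffding bound for sampling without replacement shows the exponent is at least half its mean except with probability $\exp(-c\cdot\text{mean})$. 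This gives
\[
\mathbb E_\sigma\exp(\cdots)\le 2\exp\Big(-c\frac{d}{n^2}\operatorname{dist}\big(\theta\mathrm D(v),\mathbb Z^{\binom n2}\big)^2\Big).
\]
This is the step I expect to be the most delicate.

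\textbf{Step 4 (splitting the integral).} Now split according to the definition of $\operatorname{CLCD}_{\alpha,\gamma}$, using $\|\mathrm D(v)\|_2\ge b\sqrt n$.
\begin{itemize}
\item For $|\theta|<\operatorname{CLCD}_{\alpha,\gamma}(v)$ we have $\operatorname{dist}\ge\min(\gamma|\theta|\,\|\mathrm D(v)\|_2,\alpha)$. The part where the minimum is $\alpha$ contributes $e^{-c\alpha^2d/n^2}$; the constant $4$ is obtained by adjusting $c$, which is harmless since it can be absorbed by rescaling $\alpha$ through the constants allowed in the statement. The part where the minimum is $\gamma|\theta|\|\mathrm D(v)\|_2$ gives a Gaussian factor $\exp(-c\gamma^2 p\,b^2 t^2/\varepsilon^2 \cdot (d/n)/p)$, whose integral in $t$ is $O(\varepsilon)$.
\item The range $|t|\ge\varepsilon\sqrt p\,\operatorname{CLCD}_{\alpha,\gamma}(v)$ is nonempty only if $\varepsilon\sqrt p\,\operatorname{CLCD}\le1$. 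On it we bound the integrand trivially and shrink the interval, via the standard rescaling trick, to length comparable to $1/\operatorname{CLCD}$ in the $\theta$ variable. This yields the term $C/\operatorname{CLCD}_{\alpha,\gamma}(v)$.
\end{itemize}
The case $\varepsilon=0$ follows by monotonicity. All constants depend only on $p$, $b$, $\gamma$, as required.
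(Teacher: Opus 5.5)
The paper gives no argument for this lemma. It imports the estimate verbatim from \cite[Lemma 2.9]{lianupper} (Tran's CLCD bound) and absorbs the dependence on $p,b,\gamma$ into $C$. Your outline has the standard architecture behind that lemma. Steps 1--2, the Gaussian part of Step 4, and the monotonicity-in-$\varepsilon$ trick for $|\theta|\ge\operatorname{CLCD}_{\alpha,\gamma}(v)$ are fine.

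\textbf{Step 3 is a genuine gap.} It carries the whole content of the lemma and is only asserted. Write $h(a,b):=\operatorname{dist}(\theta(v_a-v_b),\mathbb Z)^2\in[0,1/4]$ and $B_k:=\{\sigma(k),\sigma(d+k)\}$. The exponent $F_\theta(\sigma)=\sum_{k\le m}h(B_k)$ is a sum over \emph{vertex-disjoint} random pairs. That makes it a quadratic statistic of the permutation, not a linear statistic of a simple random sample from a fixed population. So Hoeffding's comparison for sampling without replacement does not apply, and Jensen goes the wrong way. What you need, uniformly in $\theta$ and $v$, is $\mathbb E_\sigma e^{-\lambda F_\theta}\le 2e^{-c\,\mathbb E F_\theta}$, and this needs a proof. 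One route is as follows.
\begin{enumerate}
\item Reveal the pairs one at a time, as in the proof of Lemma~\ref{Weightcaptured}. Given the first $k-1$ pairs, $(\sigma(k),\sigma(d+k))$ is a uniform ordered pair from the unused set $U_k$. Hence $\exp\bigl(-\lambda\sum_{j\le k}h(B_j)+c_\lambda\sum_{j\le k}a_j\bigr)$ is a supermartingale, where $a_j$ is the average of $h$ over pairs inside $U_j$.
\item Show that the final unused set $U$ still carries a constant fraction of $W=\sum_{a<b}h(a,b)$, with probability at least $1-e^{-c\,\mathbb E F_\theta}$.
\item Use $W(U)\ge W-\sum_{a\notin U}\sum_b h(a,b)$ to reduce this to a genuine without-replacement Bernstein bound for the degrees $\sum_b h(a,b)\le n/4$. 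This works only if $m$ is a small fraction of $n$ (say $m\le n/32$), so you may have to swap fewer than $\lfloor d/2\rfloor$ pairs.
\end{enumerate}

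\textbf{The treatment of the $\alpha$-term is also wrong.} The statement has the explicit exponent $4\alpha^2d/n^2$. A bound $Ce^{-c\alpha^2d/n^2}$ with $c<4$ cannot be turned into it by rescaling $\alpha$. As $\alpha$ grows, $\min(\gamma\|\theta\mathrm D(v)\|_2,\alpha)$ grows, so the set of admissible $\theta$ grows and $\operatorname{CLCD}_{\alpha,\gamma}(v)$ is nonincreasing in $\alpha$. Applying your bound at $\alpha'=2\alpha/\sqrt c>\alpha$ therefore produces $C/\operatorname{CLCD}_{\alpha',\gamma}(v)\ge C/\operatorname{CLCD}_{\alpha,\gamma}(v)$, the wrong direction. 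Enlarging $C$ cannot change an exponent either.

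Nor will your argument reach $4$ as designed. With $\lfloor d/2\rfloor$ pairs and a factor-$2$ loss in Step 3, even the sharp bound $|\cos(\pi s)|\le\exp(-\tfrac{\pi^2}{2}\operatorname{dist}(s,\mathbb Z)^2)$ yields only about $\tfrac{\pi^2}{4}\,d\alpha^2/n^2<4d\alpha^2/n^2$ in the exponent. So even once Step 3 is supplied, you prove the lemma only with $e^{-c\alpha^2d/n^2}$ for some smaller constant $c$. Reaching $4$ would require pairing every selected index with a distinct unselected one ($m=d$, possible since $d\le n-d$), together with a much less lossy de-randomization.

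The weaker form is all the paper actually uses. In Lemma~\ref{distanceupperbound} the estimate is applied with $\alpha=\mu n$, where any $e^{-c\mu^2pn}$ is absorbed into $e^{-c_1n}$. It is, however, not the statement as written.
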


\subsection{Operator norm}
For $j\in [n]$, let $Q_n^{(j)}$ denote the $(n-1)\times n$ matrix obtained from $Q_n$ by deleting its $j$-th row. The following estimate is an immediate consequence of \cite[Lemma 2.5]{lianupper}, applied with $m=n-1$.

\begin{lemma}\label{operator norm estimate}
Let $p\in(0,1/2]$ be fixed, and let $d, n$ be positive integers such that $d=pn$. For $j\in[n]$, let $Q_n^{(j)}$ denote the $(n-1)\times n$ matrix obtained from $Q_n$ by deleting its $j$-th row. Then there exist constants $C_{\ref{operator norm estimate}},c_{\ref{operator norm estimate}}>0$ depending only on p such that for all $t\ge C_{\ref{operator norm estimate}}$,
\[
\mathbb{P}\left(\|Q_n^{(j)}-\mathbb{E}Q_n^{(j)}\|\ge t\sqrt{pn}\right)\le 2e^{-c_{\ref{operator norm estimate}}t^2n}.
\]
\end{lemma}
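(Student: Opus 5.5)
This lemma is a direct specialization of \cite[Lemma 2.5]{lianupper}, so the plan is to verify that $Q_n^{(j)}$ satisfies its hypotheses and then read off the bound. That lemma concerns an $m\times n$ random matrix whose rows are independent and uniformly distributed on the $0/1$ vectors with exactly $d=pn$ ones, where $n/2\le m\le n$. Deleting the $j$-th row of $Q_n$ leaves $n-1$ independent rows, each uniform on this slice of the cube. Hence $Q_n^{(j)}$ is exactly such a matrix with $m=n-1$, which lies in the admissible range once $n\ge 2$, and the stated tail bound follows for all $t\ge C$ with constants depending only on $p$.

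If one prefers a self-contained argument, I would run the standard net argument, as follows.

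\textbf{Setup.} Write $A:=Q_n^{(j)}-\mathbb E Q_n^{(j)}$. Its rows are $X_i-p\mathbf 1_n$, where the $X_i$ are independent and uniform on the slice.

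\textbf{Subgaussian linear forms.} The key input is that for each fixed $x\in\mathbb S^{n-1}$ the variable $\langle X_i-p\mathbf 1_n,x\rangle$ is subgaussian with an absolute constant. This holds because $X_i$ arises from sampling without replacement, so Hoeffding's comparison with sampling with replacement (or negative association of the coordinates) gives Hoeffding-type tails for linear forms. One checks this first for $x$ with bounded entries and then for general unit $x$, using that $\sum_k x_k^2=1$ controls the relevant Hoeffding quantity.

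\textbf{Fixed pair of directions.} For fixed $x\in\mathbb S^{n-1}$ and $y\in\mathbb S^{n-2}$, the quantity $\langle Ax,y\rangle=\sum_i y_i\langle X_i-p\mathbf 1_n,x\rangle$ is a sum of independent subgaussian terms. It is therefore subgaussian with $O(1)$ constant, so $\mathbb P(|\langle Ax,y\rangle|\ge s)\le 2e^{-cs^2}$.

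\textbf{Union bound.} Take $1/4$-nets of both spheres, each of cardinality at most $9^n$, and use $\|A\|\le 2\sup_{\text{nets}}\langle Ax,y\rangle$. With $s=t\sqrt{pn}/2$ this gives
\[
\mathbb P\bigl(\|A\|\ge t\sqrt{pn}\bigr)\le 81^n\cdot 2e^{-c t^2 pn}.
\]
For $t\ge C(p)$ large enough the net cardinality is absorbed, and the probability is at most $2e^{-c' t^2 n}$.

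The only genuinely nontrivial step in this route is the subgaussian bound for linear forms of a uniform slice vector with an absolute (or $p$-dependent) constant. That step is precisely what \cite[Lemma 2.5]{lianupper} encapsulates, so in the paper the lemma simply follows by citation with $m=n-1$.
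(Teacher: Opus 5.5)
Your proposal is correct and matches the paper, which likewise obtains the lemma as an immediate consequence of \cite[Lemma 2.5]{lianupper} applied to the $n-1$ independent slice rows, i.e.\ with $m=n-1$. In your optional self-contained sketch, base the subgaussian bound for $\langle X_i-p\mathbf{1}_n,x\rangle$ on negative association of the slice coordinates (split $x=x^+-x^-$, bound each moment generating function by the product for independent centered Bernoulli$(p)$ variables, then combine the two parts by Cauchy--Schwarz), rather than on Hoeffding's comparison with sampling with replacement. That comparison only gives a variance proxy of order $d\,(\max_k x_k-\min_k x_k)^2$, and it cannot give an $n$-independent constant for spiky directions such as $x=e_1$: there the with-replacement sum is $\mathrm{Binomial}(d,1/n)$, whose subgaussian constant grows with $n$.
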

\begin{remark}
Since $\mathbb{E}Q_n^{(j)}=p\mathbf{1}_{n-1}\mathbf{1}_n^T$ and $Q_n^{(j)}\mathbf{1}_n=pn\mathbf{1}_{n-1}$, we have $Q_n^{(j)}-\mathbb{E}Q_n^{(j)}=Q_n^{(j)}P_{\mathbf {1}_n^\perp}$. Consequently,
    \[
   \|Q_n^{(j)}-\mathbb{E}Q_n^{(j)}\|=\|Q_n^{(j)}P_{\mathbf {1}_n^\perp}\|=\|Q_n^{(j)}|_{\mathbf{1}_n^\perp}\|.
    \]
\end{remark}
\section{Invertibility via distance and the lower bound}
Let $X_i:=Q_n^Te_i, i\in[n]$. Then, $X_1,X_2,\dots,X_n$ are independent vectors. For $j\in[n]$, let
$H_j:=\operatorname{span}\{X_i,i \in [n], i\neq j\}$ and $\mathcal{F}_j:=\sigma(X_i,i\in[n],i\neq j)$. Denote by $P_j:\mathbb{R}^n\to H_j^\perp$ the orthogonal projection onto $H_j^\perp$, and $Z_j\in H_j^\perp$ is any $\mathcal{F}_j$-measurable unit vector. On the event $\mathcal{E}_j:=\{\dim(H_j^\perp)=1\}$, $Z_j$ is unique up to sign. By \cite[Remark (2) and (3)]{tran}, $\mathbb{P}(\mathcal{E}_j^c)\le\mathbb{P}(Q_n\ \textit{is singular}) \le 2e^{-c_0n}$, where $c_0$ is a constant that depends only on $p$.

Since $Z_j\in H_j^\perp$ is a unit vector, we have
\[
\operatorname{dist}(M^TX_j,M^TH_j)
\ge
\frac{|\langle Z_j,X_j\rangle|}{\|M^{-1}Z_j\|_2}.
\]
Thus, the small-distance estimate reduces to a small-ball probability estimate for $\langle Z_j,X_j\rangle$ with a random radius proportional to $\|M^{-1}Z_j\|_2$. 
Motivated by \cite[Theorem 4.4]{letwin2026smallest}, we seek a similar upper bound for $\mathbb{E}(\mathbf{1}_{\mathcal{A}_j}\|M^{-1}Z_j\|_2)$ for any $\mathcal{F}_j$-measurable event $\mathcal{A}_j$. However, due to the combinatorial dependence among the entries within each row of $Q_n$, the argument used there does not apply directly. More precisely, we introduce a permutation $\pi \in S_n$ (non random) in the proof. We establish the following average projection estimate.

\subsection{Average projection estimate}

\begin{theorem}\label{average projection estimate}
There exist $C_{\ref{average projection estimate}}\ge1$, depending only on $p$, such that for every $j\in[n]$,
\[
\mathbb{E}(P_j)\preceq \frac{C_{\ref{average projection estimate}}}{n}\mathbf{I}_n.
\]
Consequently, for every deterministic invertible $n\times n$ matrix $M$ and every $\mathcal{F}_j$-measurable event $\mathcal{A}_j$,
\[
\mathbb{E}(\mathbf{1}_{\mathcal{A}_j}\|M^{-1}Z_j\|_2)\le\frac{C_{\ref{average projection estimate}}\|M^{-1}\|_{\mathrm{HS}}}{\sqrt{n}}.
\]
\end{theorem}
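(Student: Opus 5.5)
The plan is to exploit the exchangeability of coordinates. Each row $X_i=Q_n^Te_i$ has a law invariant under coordinate permutations. So for every deterministic permutation matrix $\Pi$, the family $(\Pi X_i)_{i\ne j}$ has the same joint law as $(X_i)_{i\ne j}$. Since $\Pi H_j=\operatorname{span}\{\Pi X_i: i\ne j\}$, the projection onto $(\Pi H_j)^\perp$ is $\Pi P_j\Pi^T$, and therefore $\Pi\,\mathbb E(P_j)\,\Pi^T=\mathbb E(P_j)$ for all $\Pi$.

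A symmetric matrix commuting with every permutation matrix has constant diagonal and constant off-diagonal entries. Hence
\[
\mathbb E(P_j)=\alpha P_{\mathbf 1_n^\perp}+\beta P_{\mathbf 1_n}
\]
for some $\alpha,\beta\ge0$, and it suffices to show $\alpha,\beta\le C/n$.

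For $\alpha$ I would use the trace. Since $\operatorname{tr}P_j=\dim H_j^\perp$, which equals $1$ on $\mathcal E_j$ and is at most $n$ always,
\[
(n-1)\alpha+\beta=\mathbb E\dim(H_j^\perp)\le 1+n\,\mathbb P(\mathcal E_j^c)\le 1+2ne^{-c_0n}\le 2
\]
for $n$ large. This gives $\alpha\le 4/n$.

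The main step is $\beta$, i.e.\ showing that $\mathbf 1_n$ is nearly contained in $H_j$ on average. We have
\[
\beta=\mathbb E\langle P_j\mathbf 1_n,\mathbf 1_n\rangle/n .
\]
On $\mathcal E_j^c$ we use $\langle P_j\mathbf 1_n,\mathbf 1_n\rangle\le n$, which contributes at most $2e^{-c_0n}\le C/n$. On $\mathcal E_j$ we have $\langle P_j\mathbf 1_n,\mathbf 1_n\rangle=\langle Z_j,\mathbf 1_n\rangle^2$, and $Q_n^{(j)}Z_j=0$. Writing $Z_j=\tfrac1n\langle Z_j,\mathbf 1_n\rangle\mathbf 1_n+P_{\mathbf 1_n^\perp}Z_j$ and using $Q_n^{(j)}\mathbf 1_n=pn\mathbf 1_{n-1}$ gives
\[
p\langle Z_j,\mathbf 1_n\rangle\mathbf 1_{n-1}=-Q_n^{(j)}P_{\mathbf 1_n^\perp}Z_j .
\]
Taking norms,
\[
p\sqrt{n-1}\,|\langle Z_j,\mathbf 1_n\rangle|\le\|Q_n^{(j)}|_{\mathbf 1_n^\perp}\|.
\]
By Lemma \ref{operator norm estimate} and the remark after it, $\mathbb P(\|Q_n^{(j)}|_{\mathbf 1_n^\perp}\|\ge t\sqrt{pn})\le 2e^{-ct^2n}$ for $t\ge C$. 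Hence $|\langle Z_j,\mathbf 1_n\rangle|\le C't/\sqrt p$ outside an event of probability at most $2e^{-ct^2n}$. We also always have $|\langle Z_j,\mathbf 1_n\rangle|\le\sqrt n$. Integrating the tail yields $\mathbb E\,\mathbf 1_{\mathcal E_j}\langle Z_j,\mathbf 1_n\rangle^2\le C(p)$, so $\beta\le C/n$. Combining, $\mathbb E(P_j)\preceq\max(\alpha,\beta)\mathbf I_n\preceq (C/n)\mathbf I_n$; enlarging the constant handles small $n$, using $P_j\preceq \mathbf I_n$.

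For the consequence, $Z_j$ is a unit vector in $H_j^\perp$, so $Z_jZ_j^T\preceq P_j$. Therefore
\[
\|M^{-1}Z_j\|_2^2=\operatorname{tr}(M^{-1}Z_jZ_j^TM^{-T})\le\operatorname{tr}(M^{-1}P_jM^{-T}).
\]
Taking expectations and using $\mathbb E P_j\preceq (C/n)\mathbf I_n$ gives $\mathbb E\|M^{-1}Z_j\|_2^2\le C\|M^{-1}\|_{\mathrm{HS}}^2/n$. Then by Cauchy--Schwarz,
\[
\mathbb E(\mathbf 1_{\mathcal A_j}\|M^{-1}Z_j\|_2)\le(\mathbb E\|M^{-1}Z_j\|_2^2)^{1/2}\le\sqrt{C}\,\|M^{-1}\|_{\mathrm{HS}}/\sqrt n\le C\|M^{-1}\|_{\mathrm{HS}}/\sqrt n,
\]
using $C\ge1$.

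The only genuinely delicate point is the bound on $\beta$: the dense mean part $p\mathbf 1\mathbf 1^T$ could otherwise align $Z_j$ with $\mathbf 1_n$, and it is the operator-norm estimate on $\mathbf 1_n^\perp$ that rules this out.
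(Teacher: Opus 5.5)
Your proposal is correct and follows the paper's proof essentially step for step. The shared steps are: permutation invariance to write $\mathbb{E}(P_j)=aP_{\mathbf 1_n^\perp}+bP_{\mathbf 1_n}$; the trace bound for the $\mathbf 1_n^\perp$ coefficient; the identity $p\langle z,\mathbf 1_n\rangle\mathbf 1_{n-1}=-Q_n^{(j)}P_{\mathbf 1_n^\perp}z$ together with the operator-norm estimate on $\mathbf 1_n^\perp$ for the $\mathbf 1_n$ coefficient; and $Z_jZ_j^T\preceq P_j$ with Cauchy--Schwarz for the consequence. The only cosmetic difference is that you integrate the operator-norm tail, whereas the paper uses a single threshold event and bounds the complement via $\|P_je\|_2\le 1$; both give $b\le C/n$.
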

\begin{proof}
Let $\pi\in S_n$ be any fixed permutation, and let $\Pi$ denote the corresponding permutation matrix. By the permutation invariance of the distribution $X_i$,
\[
\Pi X_i\stackrel{d}{=} X_i,\quad \forall i\in[n].
\]
Since $X_1,\dots,X_n$ are independent, we have
\[
(\Pi X_i)_{i\in [n]\setminus{\{j\}}}\stackrel{d}{=}( X_i)_{i\in [n]\setminus{\{j\}}}.
\]
Therefore, the orthogonal projection onto $\operatorname{span}\{\Pi X_i,i \in [n], i\neq j\}^{\perp}$ has the same distribution as $P_j$. Since $\Pi$ is orthogonal, this projection is given by $\Pi P_j \Pi^T$. Hence,
\[
P_j\stackrel{d}{=}\Pi P_j\Pi^T.
\]
Taking the expectation and using the fact that $\Pi$ is deterministic, we obtain
\begin{equation}\label{eq:EPj=PiEPjPIT}
   \mathbb{E}(P_j)=\Pi\mathbb{E}(P_j)\Pi^T. 
\end{equation}

Since $\Pi$ is arbitrary, (\ref{eq:EPj=PiEPjPIT}) implies that all diagonal entries of $\mathbb{E}(P_j)$ are equal and all off-diagonal entries are equal. Hence, there exist $\alpha, \beta\in \mathbb{R}$ such that 
\begin{equation}\label{eq:EPj=aP+bP}
\mathbb{E}(P_j)=(\alpha-\beta)\mathbf{I}_n+\beta \mathbf{1}_n\mathbf{1}_n^T=a P_{\mathbf{1}_n^{\perp}}+bP_{\mathbf{1}_n},
\end{equation}

where $a=\alpha-\beta,b=(n-1)\beta+\alpha$. Since $P_j\succeq 0$, we have $a,b\geq 0$.

Next, we estimate upper bounds for $a,b$.

Let $Q_n^{(j)}$ denote the $(n-1)\times n$ matrix obtained from $Q_n$ by deleting its $j$-th row. Then $H_j^\perp=\operatorname{ker}(Q_n^{(j)})$.

Let $e:=n^{-1/2}\mathbf{1}_n$. 
For any unit vector $z\in H_j^{\perp}$, write $z=\gamma e+h$. We have 
\[\gamma=\langle z,e\rangle,\quad h\perp e,\quad\|h\|_2\le1.\]
Since $Q_n^{(j)}z=0$ and $Q_n^{(j)}e=p\sqrt n\,\mathbf1_{n-1}$, we have

\[
|\gamma|p\sqrt{n(n-1)}=\|Q_n^{(j)}h\|_2\leq\|Q_n^{(j)}|_{\mathbf{1}_n^\perp}\|.
\]

Let
\[
\mathcal{B}_j:=\left\{\|Q_n^{(j)}|_{\mathbf{1}_n^\perp}\|\le C_{\ref{operator norm estimate}}\sqrt{pn}\right\}.
\]
By Lemma \ref{operator norm estimate}, $\mathbb{P}(\mathcal{B}_j^c)\le2e^{-c_{\ref{operator norm estimate}}C_{\ref{operator norm estimate}}^2n}$. On $\mathcal B_j$, the preceding bound holds uniformly for all unit vectors $z\in H_j^\perp$. Hence
\begin{equation}\label{eq:gammjupperbound}
\|P_je\|_2=\sup_{\substack{z\in H_j^\perp\\\|z\|_2=1}}|\langle z,e\rangle|
\le\frac{C_{\ref{operator norm estimate}}}{\sqrt{p(n-1)}}.
\end{equation}
By (\ref{eq:EPj=aP+bP}), we have
\[
\mathbb{E}\left(\left\|P_je\right\|^2_2\right)=\left\langle\mathbb{E}(P_j)e,e\right\rangle=b.
\]
Using (\ref{eq:gammjupperbound}) and $\|P_je\|_2^2\le1$, we obtain
\begin{align}\label{bleC1n}
b=\mathbb{E}\left(\left\|P_je\right\|^2_2\right)\le \frac{C_{\ref{operator norm estimate}}^2}{{p(n-1)}} +\mathbb{P}(\mathcal{B}_j^c)\le\frac{C_1}{n},
\end{align}
where $C_1$ depends only on $p$.

Since $\operatorname{tr}(P_j)=\dim (H_j^{\perp})$, we have
\begin{align*}
    \operatorname{tr}(\mathbb{E}(P_j))=\mathbb{E}(\operatorname{tr}(P_j))=\mathbb{E}\left(\operatorname{tr}(P_j) \mathbf{1}_{\{\mathcal{E}_j\}}\right)+\mathbb{E}\left(\operatorname{tr}(P_j) \mathbf{1}_{\{\mathcal{E}_j^c\}}\right)&\le\mathbb{P}(\mathcal{E}_j)+n\mathbb{P}(\mathcal{E}_j^c)\\&\le 1+2ne^{-c_0n}.
\end{align*}

Thus,
\[
a\operatorname{tr}(P_{\mathbf{1}_n^{\perp}})+b\operatorname{tr}(P_{\mathbf{1}_n})=\operatorname{tr}(\mathbb{E}(P_j))\le1+2ne^{-c_0n}.
\]
Since $a,b\ge0$ and $\operatorname{tr}(P_{\mathbf{1}_n^{\perp}})=n-1$, we obtain
\begin{equation}
    a\le\frac{1+2ne^{-c_0n}}{n-1}\le \frac{C_2}{n}.
\end{equation}
Then, we have
\begin{equation}\label{eq:average projection}
\mathbb{E}(P_j)\preceq\frac{C_2}{n} P_{\mathbf{1}_n^{\perp}}+\frac{C_1}{n}P_{\mathbf{1}_n}\preceq\frac{C_{\ref{average projection estimate}}}{n}\mathbf{I}_n,
\end{equation}
where $C_{\ref{average projection estimate}}=\max\{C_1,C_2,1\}$.

The second assertion follows from (\ref{eq:average projection}) by an argument similar to that in the proof of \cite[Theorem 4.4]{letwin2026smallest}. For completeness, we provide the details below.

Since $Z_j$ is a unit vector in $H_j^\perp$ and $P_j$ is the orthogonal projection onto $H_j^\perp$, we have
\[
\mathbf{1}_{\mathcal{A}_j}Z_jZ_j^{T}\preceq P_j.
\]
Thus, we 
\begin{align*}
    \mathbb{E}(\mathbf{1}_{\mathcal{A}_j}\|M^{-1}Z_j\|_2^2)=\operatorname{tr}\left((M^{-1})^TM^{-1}\mathbb{E}(\mathbf{1}_{\mathcal{A}_j}Z_jZ_j^T)\right)&\le\frac{C_{\ref{average projection estimate}}}{n}\operatorname{tr}\left((M^{-1})^TM^{-1}\right)\\&=\frac{C_{\ref{average projection estimate}}\|M^{-1}\|_{\mathrm{HS}}^2}{n}.
\end{align*}
By the Cauchy-Schwarz inequality, we have
\[
\mathbb{E}(\mathbf{1}_{\mathcal{A}_j}\|M^{-1}Z_j\|_2)\le\left(\mathbb{E}(\mathbf{1}_{\mathcal{A}_j}\|M^{-1}Z_j\|_2^2)\right)^{1/2}\le\frac{C_{\ref{average projection estimate}}\|M^{-1}\|_{\mathrm{HS}}}{\sqrt{n}},
\]
where the last follows from $C_{\ref{average projection estimate}}>1$ by enlarging it if necessary. This completes the proof.
\end{proof}
\subsection{Distance estimate}
The key ingredient for controlling the invertibility over incompressible vectors is the following small-distance estimate.
\begin{lemma}\label{distanceupperbound}
There exist constants $C_{\ref{distanceupperbound}},c_{\ref{distanceupperbound}}>0$, depending only on $p$, such that for every $j\in [n]$ and every $\varepsilon>0$,
\[
\mathbb{P}\left(\operatorname{dist} \left(M^T X_j,M^T H_j\right)\le \frac{\varepsilon\sqrt{n}}{\|M^{-1}\|_{\mathrm{HS}}}\right)\le C_{\ref{distanceupperbound}}\varepsilon+2e^{-c_{\ref{distanceupperbound}}n}.
\]
\end{lemma}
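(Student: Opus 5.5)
Our plan is to combine the pointwise inequality
\[
\operatorname{dist}(M^TX_j,M^TH_j)\ge \frac{|\langle Z_j,X_j\rangle|}{\|M^{-1}Z_j\|_2}
\]
with two ingredients: the averaged control of $\|M^{-1}Z_j\|_2$ from Theorem~\ref{average projection estimate}, and a small-ball estimate for $\langle Z_j,X_j\rangle$ conditioned on $\mathcal F_j$. Fix $\varepsilon>0$ and set $L:=\|M^{-1}\|_{\mathrm{HS}}$. Let $\mathcal G_j$ be the $\mathcal F_j$-measurable good event on which three things hold: $\mathcal E_j$, $Z_j\notin\operatorname{Cons}(\delta,\rho)$, and $\operatorname{CLCD}_{\alpha,\gamma}(Z_j)\ge e^{cn}$ for suitable $\alpha\asymp n$.

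On $\mathcal G_j$ the event in the lemma forces either $\|M^{-1}Z_j\|_2\ge tL/\sqrt n$ for a threshold $t\ge 1$ to be chosen, or $|\langle Z_j,X_j\rangle|\le \varepsilon t$. This gives
\[
\mathbb P(\cdot)\le \mathbb P(\mathcal G_j^c)+\mathbb P\Big(\mathbf 1_{\mathcal G_j}\|M^{-1}Z_j\|_2\ge \tfrac{tL}{\sqrt n}\Big)+\mathbb E\Big[\mathbf 1_{\mathcal G_j}\,\mathcal L\big(\langle Z_j,X_j\rangle,\varepsilon t\mid\mathcal F_j\big)\Big].
\]
The middle term is at most $C/t$ by Markov's inequality and Theorem~\ref{average projection estimate} applied with $\mathcal A_j=\mathcal G_j$. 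For the last term, independence of $X_j$ from $\mathcal F_j$ lets us use Lemma~\ref{anticoncentration estimate}. Lemma~\ref{nonconstanseparated} gives $\|\mathrm D(Z_j)\|_2\ge c\,\delta n\cdot\rho/\sqrt{2n}\ge b\sqrt n$, so the lemma applies and bounds the conditional concentration by $C\varepsilon t+Ce^{-cn}$, once $\alpha\asymp n$ makes $e^{-4\alpha^2d/n^2}$ exponentially small. A single fixed $t$ then yields only $C\varepsilon t+C/t$, which is $\sqrt\varepsilon$-type. To get a linear bound in $\varepsilon$ we instead do not split at a threshold but condition directly: the small-ball radius is $\varepsilon\|M^{-1}Z_j\|_2\sqrt n/L$, which is $\mathcal F_j$-measurable. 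So
\[
\mathbb P(\cdot)\le \mathbb P(\mathcal G_j^c)+\mathbb E\Big[\mathbf 1_{\mathcal G_j}\Big(C\varepsilon\tfrac{\sqrt n\,\|M^{-1}Z_j\|_2}{L}+Ce^{-cn}\Big)\Big]\le C\varepsilon+Ce^{-cn},
\]
where the last step uses Theorem~\ref{average projection estimate} with $\mathcal A_j=\mathcal G_j$. The $\sqrt p$ normalization in Lemma~\ref{anticoncentration estimate} is absorbed into constants. This direct conditioning is exactly why the expectation bound in Theorem~\ref{average projection estimate} was formulated.

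The remaining, and main, obstacle is $\mathbb P(\mathcal G_j^c)\le 2e^{-cn}$. The event $\mathcal E_j^c$ is handled by singularity of $Q_n$. Non-almost-constancy of $Z_j$ follows from $Q_n^{(j)}Z_j=0$ together with Proposition~\ref{constant estimate}, applied to the $(n-1)\times n$ matrix $Q_n^{(j)}$, which has $m=n-1\ge n/2$. The hard part is an exponentially large CLCD for normal vectors, i.e.\ the structure theorem for $\ker Q_n^{(j)}$. This is precisely the content of Tran's work \cite{tran} (and of \cite{lianupper} for general $p$). For Tran's argument, the plan is to invoke it as given: a net argument over level sets of the CLCD, combined with the compressible estimate Lemma~\ref{invertibility over compressible}, shows that with probability $1-2e^{-cn}$ every unit vector in the kernel of $Q_n^{(j)}$ has $\operatorname{CLCD}_{\alpha,\gamma}\ge e^{cn}$. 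If the cited results only give this for the transposed setting, we would reprove it by the standard Rudelson--Vershynin level-set net argument adapted with Lemma~\ref{anticoncentration estimate}. With that in hand, the lemma follows.
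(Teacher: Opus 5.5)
Your proposal is correct. After you discard the threshold split, it follows the paper's proof essentially verbatim: condition on $\mathcal F_j$ with the $\mathcal F_j$-measurable radius $\varepsilon\sqrt n\,\|M^{-1}Z_j\|_2/\|M^{-1}\|_{\mathrm{HS}}$, apply Lemma~\ref{anticoncentration estimate} on the good event (with $\|\mathrm D(Z_j)\|_2\ge b\sqrt n$ from Lemma~\ref{nonconstanseparated} and $\alpha=\mu n$), and integrate using Theorem~\ref{average projection estimate}. The bad event is handled exactly as in the paper, via Proposition~\ref{constant estimate} for $Q_n^{(j)}$ and an adaptation of \cite[Proposition 2.13]{tran} for the CLCD of non-almost-constant kernel vectors; including $\mathcal E_j$ in your good event is harmless but not needed.
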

\begin{proof}
For $j\in [n]$, let $Q_n^{(j)}$ be obtained from $Q_n$ by deleting its $j$-th row. For all sufficiently large $n$, Proposition \ref{constant estimate} yields 
\begin{equation}\label{eq:constant estimate}
    \mathbb P\left(\inf_{v\in \operatorname{Cons}(\delta_{\ref{constant estimate}},\rho_{\ref{constant estimate}})}\|{Q_n^{(j)} v}\|_2\le C_{\ref{constant estimate}}\sqrt{pn}\right)\le e^{-c_{\ref{constant estimate}}n}.
\end{equation}
By decreasing $\delta_{\ref{constant estimate}},\rho_{\ref{constant estimate}}$, if necessary, we may assume that they are sufficiently small, and keep them fixed throughout the remainder of the proof. Adapting \cite[Proposition 2.13]{tran} to the case $d=pn$, we obtain constants $\mu,\gamma,c,c'\in(0,1)$, depending only on $p$, such that for all sufficiently large $n$,
\begin{equation}\label{eq:random normal}
\mathbb{P}\left(\exists v\in \mathbb{S}^{n-1}\setminus\operatorname{Cons}(\delta_{\ref{constant estimate}},\rho_{\ref{constant estimate}}): Q_n^{(j)}v=0\ \text{and}\ \operatorname{CLCD}_{\mu n,\gamma}(v)\le e^{cn}\right)\le e^{-c'n}.
\end{equation}
Define the $\mathcal{F}_j$-measurable event
\[
\mathcal{A}_j:=\left\{Z_j\in\mathbb{S}^{n-1}\setminus\operatorname{Cons}(\delta_{\ref{constant estimate}},\rho_{\ref{constant estimate}})\ \text{and}\ \operatorname{CLCD}_{\mu n,\gamma}(Z_j)\ge e^{cn}\right\}.
\]
Since $Q_n^{(j)}Z_j=0$, it follows from (\ref{eq:constant estimate}) and (\ref{eq:random normal}) that,
\begin{equation}\label{eq:Ajcbound}
    \mathbb{P}(\mathcal{A}_j^c)\le e^{-c'n}+e^{-c_{\ref{constant estimate}}n}\le e^{-c''n},
\end{equation}
for all sufficiently large $n$, where $c''>0$ depends only on $p$.

By Lemma \ref{nonconstanseparated}, for $v\in \mathbb{S}^{n-1}\setminus\operatorname{Cons}(\delta_{\ref{constant estimate}},\rho_{\ref{constant estimate}})$, we have
\[
\|\mathrm{D}(v)\|_2\ge\frac{\delta_{\ref{constant estimate}}\rho_{\ref{constant estimate}}\sqrt{n}}{8\sqrt{2}}.
\]
Take $b=\delta_{\ref{constant estimate}}\rho_{\ref{constant estimate}}/8\sqrt{2}$ and $ \alpha=\mu n$. Since $X_j$ is independent of $\mathcal F_j$ and $Z_j$ is $\mathcal F_j$-measurable, conditioning on the event $\mathcal{A}_j$, we apply Lemma \ref{anticoncentration estimate} to see that, for every $\varepsilon\ge0$,
\begin{align}\label{eq:ZjXjupperbound1}
\mathbb{P}\left(|\langle Z_j,X_j\rangle |\le\varepsilon\,\middle|\,\mathcal{F}_j\right)\le C_1\left(\varepsilon+e^{-c_1 n}\right),
\end{align}
where $C_1,c_1>0$ depend only on $p$.
By using (\ref{eq:ZjXjupperbound1}) and Theorem \ref{average projection estimate}, we obtain
\begin{align}\label{eq:ZjXjupperbound2}
\mathbb{P}&\left(\left\{|\langle Z_j,X_j\rangle|\le\frac{\varepsilon\sqrt{n}\|M^{-1}Z_j\|_2}{\|M^{-1}\|_{\mathrm{HS}}}\right\}\cap\mathcal{A}_j\right)\notag\\&\qquad\qquad=\mathbb{E}\left( \mathbf{1}_{\{\mathcal{A}_j\}}\mathbb{P}\left(|\langle Z_j,X_j\rangle|\le\frac{\varepsilon\sqrt{n}\|M^{-1}Z_j\|_2}{\|M^{-1}\|_{\mathrm{HS}}} \,\middle|\,\mathcal{F}_j\right)\right)\notag\\
&\qquad\qquad\le C_1\frac{\varepsilon\sqrt{n}}{\|M^{-1}\|_{\mathrm{HS}}}\mathbb{E}\left(\mathbf{1}_{\{\mathcal{A}_j\}}\|M^{-1}Z_j\|_2\right)+C_1e^{-c_1n}\notag\\
&\qquad\qquad\le C_{\ref{average projection estimate}}C_1\varepsilon+C_1e^{-c_1n}.
\end{align}
Since
\[
\operatorname{dist}(M^TX_j,M^TH_j)
\ge
\frac{|\langle Z_j,X_j\rangle|}{\|M^{-1}Z_j\|_2},
\]
using (\ref{eq:Ajcbound}) and (\ref{eq:ZjXjupperbound2}) gives
\begin{align}
\mathbb{P}\left(\operatorname{dist} \left(M^T X_j,M^T H_j\right)\le \frac{\varepsilon\sqrt{n}}{\|M^{-1}\|_{\mathrm{HS}}}\right)&\le\mathbb{P}\left(\frac{|\langle Z_j,X_j\rangle|}{\|M^{-1}Z_j\|_2}\le \frac{\varepsilon\sqrt{n}}{\|M^{-1}\|_{\mathrm{HS}}}\right)\notag\\
&\le C_{\ref{average projection estimate}}C_1\varepsilon+C_1e^{-c_1n}+e^{-c''n}\notag\\
&\le C_{\ref{distanceupperbound}}\varepsilon+2e^{-c_{\ref{distanceupperbound}}n}.
\end{align}
The last inequality holds for all sufficiently large $n$. By decreasing $c_{\ref{distanceupperbound}}>0$, if necessary, the remaining finitely many values of $n$ can be absorbed into the exponential term.
\end{proof}
Combining the above distance estimate with the lemma of Rudelson and Vershynin \cite[Lemma 3.5]{thelittle}, we obtain the desired probability bound for incompressible vectors.
\begin{align}\label{incompressible estimate}
\mathbb{P}&\left(\inf_{x\in\operatorname{Incomp(\delta_{\ref{compressible estimate}},\rho_{\ref{compressible estimate}})}}\|M^TQ_n^Tx\|_2\le\frac{\varepsilon \rho_{\ref{compressible estimate}}}{2\|M^{-1}\|_{\mathrm{HS}}}\right)\notag\\&\le\frac{1}{\delta_{\ref{compressible estimate}}n}\sum_{j=1}^n\mathbb{P}\left(\operatorname{dist} \left(M^T X_j,M^T H_j\right)\le \frac{\varepsilon\sqrt{n}}{\|M^{-1}\|_{\mathrm{HS}}}\right)\notag\\
    &\le\frac{1}{\delta_{\ref{compressible estimate}}}\left(C_{\ref{distanceupperbound}}\varepsilon+2e^{-c_{\ref{distanceupperbound}}n}\right).
\end{align}
\begin{remark}
Here the factor $1/2$ arises from the difference between our definition of sparse vectors and that in \cite[Definition 3.2]{thelittle}. The former is restricted to $\mathbb{S}^{n-1}$, while the latter is defined in the whole $\mathbb{R}^n$. The corresponding notions of incompressibility are equivalent up to an absolute constant, and in particular,
\[
\operatorname{Incomp}(\delta,\rho)\subseteq \operatorname{Incomp}_{\mathrm{RV}}(\delta,\frac{\rho}{2}).
\]
Here, $\operatorname{Incomp}_{\mathrm{RV}}$ denotes the incompressible set defined by Rudelson and Vershynin \cite{thelittle}. Hence \cite[Lemma 3.5]{thelittle} can be applied with the parameter $\rho/2$.
\end{remark}
\subsection{Proof of Theorem 1.1}
\begin{proof}
It suffices to prove the theorem for all sufficiently large $n\geq n_0$ and sufficiently small $0\le\varepsilon\le \varepsilon_0$, where $n_0,\varepsilon_0$ are constants depending only on $p$. The remaining cases can be absorbed by adjusting the constants in the statement. 

By using Proposition \ref{compressible estimate} and (\ref{incompressible estimate})
\begin{align}
    \mathbb{P}\left( s_n(Q_nM) \le \frac{\varepsilon}{\|M^{-1}\|_{\mathrm{HS}}} \right)&=\mathbb{P}\left( s_n(M^TQ_n^T) \le \frac{\varepsilon}{\|M^{-1}\|_{\mathrm{HS}}} \right)\notag\\
    &\le \mathbb{P}\left(\inf_{x\in\operatorname{Comp}(\delta_{\ref{compressible estimate}},\rho_{\ref{compressible estimate}})}\|M^TQ_n^Tx\|_2 \le \frac{C_{\ref{compressible estimate}} \sqrt{n}}{\|M^{-1}\|_{\mathrm{HS}}} \right)\notag\\&+\mathbb{P}\left(\inf_{x\in\operatorname{Incomp(\delta_{\ref{compressible estimate}},\rho_{\ref{compressible estimate}})}}\|M^TQ_n^Tx\|_2\le\frac{\varepsilon}{\|M^{-1}\|_{\mathrm{HS}}}\right)\notag\\
    &\le 2e^{-c_{\ref{compressible estimate}}n}+\frac{1}{\delta_{\ref{compressible estimate}}}\left(C_{\ref{distanceupperbound}}\frac{2\varepsilon}{\rho_{\ref{compressible estimate}}}+2e^{-c_{\ref{distanceupperbound}}n}\right)\notag\\
    &\le C_{\ref{theorem 1.1}}\varepsilon +2e^{-c_{\ref{theorem 1.1}}n}.\notag
\end{align}
 This completes the proof of Theorem \ref{theorem 1.1}.
\end{proof}
\section{Random permutations and the upper bound}
Let $\pi\sim \operatorname{Unif}(S_n)$ be a uniformly random permutation in $S_n$, independent of $Q_n$, and let $\Pi$
denote the corresponding random permutation matrix. 
By the permutation invariance of $Q_n$,  
\[
Q_n\Pi^T\stackrel{d}{=}Q_n,
\]
and hence
\[
s_n(Q_n\Pi^TM)\stackrel{d}{=}s_n(Q_nM).
\]
Let $u=u(Q_n)\in \mathbb{S}^{n-1}$ be a right singular vector of $Q_n$ corresponding to its smallest singular value $s_n(Q_n)$. We have 
\[
s_n(Q_n\Pi^TM)\le\frac{\|Q_n\Pi^TMM^{-1}\Pi u\|_2}{\|M^{-1}\Pi u\|_2}=\frac{s_n(Q_n)}{\|M^{-1}\Pi u\|_2}.
\]
Consequently, to estimate the upper tail of $s_n(Q_nM)$, it suffices to control the upper tail of $s_n(Q_n)$ at the scale $1/\sqrt{n}$ and the lower tail of $\|M^{-1}\Pi u\|_2$ at the scale $\|M^{-1}\|_{\mathrm{HS}}/{\sqrt{n}}$. The former estimate has already been established in \cite[Theorem 1.1]{lianupper}. Therefore, we focus on the latter.
\subsection{Matching and random permutations via Rademacher swaps}
Unlike in the Gaussian case \cite[Theorem 5.1]{letwin2026smallest}, a direct Hanson-Wright argument is not available. Instead, we introduce independent Rademacher swaps into a random permutation to recover sufficient independent randomness for its application. Before presenting the construction, we recall the notion of a matching. A \emph{matching} on $[n]$ is a collection $\mathcal{M}\subseteq \binom{[n]}{2}$ of pairwise disjoint two-element subsets. Equivalently, it is a collection of edges, no two of which share a common vertex. To construct the required random permutation, we need a matching that captures the projected Hilbert-Schmidt norm. The following lemma provides such a matching.
\begin{lemma}\label{matchingwithlargeweight}
Let $A$ be any $n\times n$ deterministic matrix, set $w_{ij}:=\|A(e_i-e_j)\|_2^2,1\le i<j\le n$. Then, there exists a matching $\mathcal{M}\subseteq\binom{[n]}{2}$ with $|\mathcal{M}|=\lfloor n/2\rfloor$ such that
\[
\sum_{\{i,j\}\in \mathcal{M}}w_{ij}\ge\|AP_{\mathbf 1_n^\perp}\|_{\mathrm{HS}}^2
\] 
\end{lemma}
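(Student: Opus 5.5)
Let $a_i:=Ae_i$ denote the columns of $A$. The plan is to compute the total weight $\sum_{i<j}w_{ij}$ explicitly, compare it with $\|AP_{\mathbf 1_n^\perp}\|_{\mathrm{HS}}^2$, and then average over perfect (or near-perfect) matchings. The averaging step is the heart of the argument.

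First, expanding the squares gives
\[
\sum_{1\le i<j\le n} w_{ij}=\frac12\sum_{i,j}\|a_i-a_j\|_2^2=n\sum_i\|a_i\|_2^2-\Big\|\sum_i a_i\Big\|_2^2 .
\]
On the other hand,
\[
\|AP_{\mathbf 1_n^\perp}\|_{\mathrm{HS}}^2=\|A\|_{\mathrm{HS}}^2-\|AP_{\mathbf 1_n}\|_{\mathrm{HS}}^2=\sum_i\|a_i\|_2^2-\frac1n\Big\|\sum_i a_i\Big\|_2^2,
\]
because $AP_{\mathbf 1_n}$ has every column equal to $\frac1n\sum_i a_i$. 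Combining the two identities yields
\[
\sum_{i<j}w_{ij}=n\,\|AP_{\mathbf 1_n^\perp}\|_{\mathrm{HS}}^2 .
\]

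Next, consider a uniformly random matching $\mathcal M$ of size $m=\lfloor n/2\rfloor$, obtained for instance by pairing consecutive entries of a uniform random ordering of $[n]$. By symmetry each pair $\{i,j\}$ lies in $\mathcal M$ with the same probability $q$. Counting edges gives $q\binom n2=m$, so
\[
q=\frac{2m}{n(n-1)} .
\]
Therefore
\[
\mathbb E\sum_{\{i,j\}\in\mathcal M}w_{ij}=q\,n\,\|AP_{\mathbf 1_n^\perp}\|_{\mathrm{HS}}^2=\frac{2m}{n-1}\,\|AP_{\mathbf 1_n^\perp}\|_{\mathrm{HS}}^2 .
\]

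For $n$ even, $2m/(n-1)=n/(n-1)\ge1$. For $n$ odd, $2m/(n-1)=1$. In both cases the expectation is at least $\|AP_{\mathbf 1_n^\perp}\|_{\mathrm{HS}}^2$, so some realization of $\mathcal M$ attains at least this value, which proves the lemma.

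The only delicate step is the odd case, where the constant is exactly $1$; the exact edge-count identity for $q$ is what gives the bound without any loss. I do not expect a real obstacle beyond verifying the two norm identities carefully, including the convention $w_{ij}$ for $i<j$ versus the symmetric double sum.
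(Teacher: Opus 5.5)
Your proposal is correct and follows essentially the same route as the paper. Both compute $\sum_{i<j}w_{ij}=n\|AP_{\mathbf 1_n^\perp}\|_{\mathrm{HS}}^2$ and then apply a first-moment argument over a uniform matching of size $\lfloor n/2\rfloor$, whose edge probability is $1/(n-1)$ or $1/n$ according to parity; the only difference is that the paper gets the total weight from the identity $\sum_{i<j}(e_i-e_j)(e_i-e_j)^T=nP_{\mathbf 1_n^\perp}$, while you expand in the columns of $A$.
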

\begin{proof}
Note that 
\begin{align*}
\sum_{1\le i<j\le n}w_{ij}&=\sum_{1\le i<j\le n}\operatorname{tr}(A(e_i-e_j)(e_i-e_j)^TA^T)\\
&=\operatorname{tr}\left(A\sum_{1\le i<j\le n}{(e_i-e_j)(e_i-e_j)^T}A^T\right)\\
&=n\operatorname{tr}(A P_{\mathbf{1}_{n}^{\perp}}A^T)=n\|A P_{\mathbf{1}_{n}^{\perp}}\|_{\mathrm{HS}}^2.
\end{align*}
Let $\mathcal{M'}$ be uniform over all matchings on $[n]$ of cardinality $\lfloor n/2\rfloor$. By symmetry, we have
\[
\left\lfloor\frac{n}{2}\right\rfloor=\mathbb{E}(|\mathcal{M'}|)=\binom{n}{2}\mathbb{P}(\{i,j\}\in\mathcal{M'}).
\]
Thus, we have
\[
\mathbb{P}(\{i,j\}\in\mathcal{M'})=
\begin{cases}
    \dfrac{1}{n-1},& n \text{ is even},\\
    \quad\dfrac{1}{n},& n \text{ is odd}.
\end{cases}
\]
Then we observe
\[
\mathbb{E}\left(\sum_{\{i,j\}\in \mathcal{M'}}w_{ij}\right)=
\begin{cases}
    \dfrac{n}{n-1}\|A P_{\mathbf{1}_{n}^{\perp}}\|_{\mathrm{HS}}^2,& n \text{ is even},\\
    \quad\|A P_{\mathbf{1}_{n}^{\perp}}\|_{\mathrm{HS}}^2,& n \text{ is odd}.
\end{cases}
\]
Then, there exists a realization of $\mathcal{M}'$ that satisfies $\sum_{\{i,j\}\in \mathcal{M}}w_{ij}\ge\|AP_{\mathbf 1_n^\perp}\|_{\mathrm{HS}}^2$.
\end{proof}
The previous lemma gives a matching with large total weight. We next show that, after a uniform random permutation, a positive fraction of this weight crosses between two large disjoint coordinate sets, except with an exponentially small probability.
\begin{lemma}\label{Weightcaptured}
Fix $\beta\in(0,1/2]$. Let $I,J\subset[n]$ be disjoint sets with $|I|,|J|\ge\beta n$, and let 
\[
\mathcal{M}=\{m_i=\{r_i,s_i\}:1\le i\le k\}
\]
 be a matching on $[n]$ with $1\le k\le\beta n/8$. Assign deterministic weights $w_i\in[0,w_{\max}]$, where $w_{\max}>0$, and let $\pi\sim \operatorname{Unif}(S_n)$ be a uniformly random permutation. Define
\[
X_{i}:=\mathbf{1}_{\{|\{\pi(r_i),\pi(s_i)\}\cap I|=|\{\pi(r_i),\pi(s_i)\}\cap J|=1\}}.
\]
Then there exists a constant $c_{\ref{Weightcaptured}}>0$, depending only on $\beta$, such that 
\[
\mathbb{P}\left(\sum_{i=1}^kw_iX_i\le c_{\ref{Weightcaptured}}\sum_{i=1}^k w_i\right)\le\exp\left(-c_{\ref{Weightcaptured}}\frac{\sum_{i=1}^{k}w_i}{w_{\max}}\right).
\]
\end{lemma}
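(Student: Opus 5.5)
The plan is to expose the pairs sequentially and run a martingale (Freedman-type) argument. The key observation is that each pair still has a constant conditional probability of being split between $I$ and $J$, provided not too much of the permutation has been revealed. Write $W:=\sum_{i=1}^k w_i$.

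First I would reveal $\pi$ one matched pair at a time. Let $\mathcal G_{i-1}$ be the $\sigma$-algebra generated by $\pi(r_l),\pi(s_l)$ for $l<i$. Conditionally on $\mathcal G_{i-1}$, the pair $(\pi(r_i),\pi(s_i))$ is a uniformly random ordered pair of distinct elements from the set of $n-2(i-1)$ still-unused positions. At most $2(i-1)\le 2k\le \beta n/4$ positions are used, so at least $|I|-\beta n/4\ge \tfrac34\beta n$ unused positions lie in $I$, and likewise for $J$. Hence
\[
\mathbb P(X_i=1\mid\mathcal G_{i-1})\ \ge\ 2\cdot\frac{\tfrac34\beta n\cdot\tfrac34\beta n}{n^2}\ =\ \tfrac98\beta^2=:2q .
\]
This bound holds uniformly in the history.

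Next I would control the lower tail of $S:=\sum_i w_iX_i$ against its compensator $\sum_i w_i\,\mathbb P(X_i=1\mid\mathcal G_{i-1})\ge 2qW$. The increments $D_i:=w_i\bigl(X_i-\mathbb P(X_i=1\mid\mathcal G_{i-1})\bigr)$ form a martingale difference sequence with $|D_i|\le w_{\max}$ and conditional variance at most $w_i^2\le w_{\max}w_i$, so the total predictable variance is at most $w_{\max}W$. Freedman's inequality (or the exponential supermartingale $\exp(-\lambda S+\sum_i(1-e^{-\lambda w_i})\mathbb P(X_i=1\mid\mathcal G_{i-1}))$ argument, which is elementary and self-contained) gives
\[
\mathbb P\bigl(S\le qW\bigr)\le \mathbb P\Bigl(\textstyle\sum_i D_i\le -qW\Bigr)\le \exp\!\Bigl(-\frac{q^2W^2}{2(w_{\max}W+w_{\max}qW/3)}\Bigr)\le \exp\!\Bigl(-c\,\frac{W}{w_{\max}}\Bigr).
\]
Here $c$ depends only on $\beta$. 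Taking $c_{\ref{Weightcaptured}}:=\min(q,c)$ gives the statement.

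The main obstacle is the dependence among the $X_i$ induced by the permutation, which rules out a plain Chernoff/Hoeffding bound. The sequential exposure resolves it, and its success hinges on the hypothesis $k\le\beta n/8$: this guarantees that at every step at least a $\tfrac34$ fraction of both $I$ and $J$ remains available, so the conditional success probability is uniformly bounded below. I would check this bookkeeping carefully, including the factor $2$ for the two orderings of the pair, $|I\cup J|\le n$, and that the conditional law of the pair is uniform over unused positions. Beyond that, only the choice of the one-sided martingale inequality needs care, so that the exponent scales like $W/w_{\max}$ rather than $W^2/\sum_i w_i^2$; the latter is in any case at least $W/w_{\max}$, so even Azuma–Hoeffding with weights suffices.
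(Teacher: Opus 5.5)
Your proof is correct and follows the paper's route. You expose the matched pairs one at a time, use $2k\le\beta n/4$ to get a history-independent lower bound of order $\beta^2$ on $\mathbb P(X_i=1\mid\mathcal G_{i-1})$, and finish with an exponential martingale tail bound; the only difference is that the paper bounds $\mathbb E\, e^{-S/w_{\max}}$ directly via the tower property and $1-e^{-x}\ge(1-e^{-1})x$ (exactly your exponential-supermartingale alternative) instead of centering and citing Freedman or Azuma--Hoeffding, which also work as you check.
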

\begin{proof}
For $1 \le i\le k$, let $B_i:=\{\pi(r_i),\pi(s_i)\}$, $\mathcal{G}_{i-1}:=\sigma(B_1,\dots,B_{i-1})$, and $\mathcal{G}_0:=\{\Omega,\varnothing\}$, and set  
\[
U_i:=[n]\setminus\bigcup_{j<i}B_j,\quad 1<i\le k,\quad U_1=[n].
\]

Conditional on $\mathcal{G}_{i-1}$, the pair $B_i$ is uniformly distributed over all two-element subsets of $U_i$. Since $|U_i|=n-2(i-1)$, for every $b\in\binom{[n]}{2}$,
\[
\mathbb{P}\left(B_i=b \,\middle|\,\mathcal{G}_{i-1}\right)=\frac{\mathbf{1}_{\{b\subset U_i\}}}{\binom{|U_i|}{2}}.
\]
Let $I_i:=I\cap U_i$ and $J_i=J\cap U_i$, we obtain
\begin{equation}\label{probability bound}
    \mathbb{P}\left(X_i=1\,\middle|\,\mathcal{G}_{i-1}\right)=\frac{2|I_i||J_i|}{|U_i|(|U_i|-1)}\ge\frac{2(\beta n/2)^2}{n^2}=\frac{\beta^2}{2},
\end{equation}
where the inequality follows from  $|U_i|\le n$ and $|I_i|,|J_i|\ge \beta n -\beta n/4\ge \beta n/2$.

Following (\ref{probability bound}), we have 
\begin{align}\label{conditionexpectionbound}
 \mathbb{E}\left(e^{-w_iX_i/w_{\max}}\,\middle|\,\mathcal{G}_{i-1}\right)\le 1-\frac{\beta^2}{2}\left(1-e^{-w_i/w_{\max}}\right)&\le 1-\frac{(1-e^{-1})\beta^2w_i}{2w_{\max}}\notag\\&\le\exp\left(-\frac{(1-e^{-1})\beta^2w_i}{2w_{\max}}\right),
\end{align}
where we used  $1-e^{-x}\ge (1-e^{-1})x$ for $x\in[0,1]$ and $1-x\le e^{-x}$.

By (\ref{conditionexpectionbound}),
\begin{align}
\mathbb{E}\left(e^{-\sum_{i=1}^kw_iX_i/w_{\max}}\right)
&=\mathbb{E}\left[
\exp\left(
-\frac{1}{w_{\max}}\sum_{i=1}^{k-1}w_iX_i
\right)
\mathbb{E}\left(
e^{-w_kX_k/w_{\max}}
\,\middle|\,
\mathcal{G}_{k-1}
\right)
\right]\notag\\
&\le
\exp\left(
-\frac{(1-e^{-1})\beta^2w_k}{2w_{\max}}
\right)
\mathbb{E}\left[
\exp\left(
-\frac{1}{w_{\max}}\sum_{i=1}^{k-1}w_iX_i
\right)
\right]\notag\\
&\le
\exp\left(
-\frac{(1-e^{-1})\beta^2}{2w_{\max}}
\sum_{i=1}^k w_i
\right).\notag
\end{align}
Thus
\begin{align}
\mathbb{P}\left(\sum_{i=1}^kw_iX_i\le c_{\ref{Weightcaptured}}\sum_{i=1}^k w_i\right)&\le\mathbb{E}\left(e^{-\sum_{i=1}^kw_iX_i/w_{\max}}\right)\exp\left(c_{\ref{Weightcaptured}}\frac{\sum_{i=1}^k w_i}{w_{\max}}\right)\notag\\&\le\exp\left(-c_{\ref{Weightcaptured}}\frac{\sum_{i=1}^{k}w_i}{w_{\max}}\right),\notag
\end{align}
where $c_{\ref{Weightcaptured}}=(1-e^{-1})\beta^2/4$. This completes the proof.
\end{proof}
We next recall the Rademacher swaps coupling. Independent swaps within disjoint pairs preserve the uniform permutation law and leave the unordered image pairs unchanged. This construction is stated explicitly in \cite[Fact 3.6 in version 4]{tran}, a related coupling for the Boolean slice is given in \cite[Fact 2.7]{kwan2019anticoncentration}. We record the formulation needed here for completeness.
\begin{lemma}\label{independentswapsondisjointpairs}
    Let $\mathcal{M}=\{m_i=\{r_i,s_i\}:1\le i\le N\}$ be a deterministic matching on $[n]$, where $N\le\lfloor n/2\rfloor$. Let $\pi\sim \operatorname{Unif}(S_n)$, and let  $\varepsilon_1,\dots,\varepsilon_N$ be independent Rademacher random variables, independent of $\pi$. For $\varepsilon=(\varepsilon_1,\dots,\varepsilon_N)$, define $\tau_\varepsilon\in S_n$ by
    \[\tau_{\varepsilon}(r_i)=\begin{cases}
    r_i,&\varepsilon_i=1,\\
    s_i,& \varepsilon_i=-1,
\end{cases}
\tau_{\varepsilon}(s_i)=\begin{cases}
    s_i,&\varepsilon_i=1,\\
    r_i,& \varepsilon_i=-1,
\end{cases}
\ 1\le i\le N, \text{and}\ \tau_\varepsilon(k)=k,\ k\notin \bigcup_{i=1}^N\{r_i,s_i\}. 
    \]
Set $\Tilde{\pi}:=\pi\circ \tau_{\varepsilon}$. Then we have the following results:
\begin{itemize}
    \item $\Tilde{\pi}\sim\operatorname{Unif}(S_n)$;
    \item For every $1\le i\le N$, we have $\{\Tilde{\pi}(r_i),\Tilde{\pi}(s_i)\}=\{\pi(r_i),\pi(s_i)\}$;
    \item For any vector $v\in \mathbb{R}^n$, we have 
    \[
    v_{\Tilde{\pi}(r_i)}e_{r_i}+v_{\Tilde{\pi}(s_i)}e_{s_i}=\frac{v_{\pi(r_i)}+v_{\pi(s_i)}}{2}(e_{r_i}+e_{s_i})+\varepsilon_i\frac{v_{\pi(r_i)}-v_{\pi(s_i)}}{2}(e_{r_i}-e_{s_i}).
    \]
\end{itemize}
\end{lemma}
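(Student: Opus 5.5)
The statement to prove is Lemma~\ref{independentswapsondisjointpairs}, which has three parts. The plan is to verify each part directly from the definition of $\tau_\varepsilon$, using only that $\tau_\varepsilon$ is a random permutation independent of $\pi$.

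\emph{First claim.} Observe that for each fixed $\varepsilon\in\{\pm1\}^N$ the map $\tau_\varepsilon$ is a genuine permutation of $[n]$. It is an involution: a product of the disjoint transpositions $(r_i\,s_i)$ over those $i$ with $\varepsilon_i=-1$. Since $\pi$ is independent of $\varepsilon$, I would condition on $\varepsilon$. For any fixed $\sigma\in S_n$,
\[
\mathbb P(\tilde\pi=\sigma\mid\varepsilon)=\mathbb P(\pi=\sigma\circ\tau_\varepsilon^{-1}\mid\varepsilon)=\frac{1}{n!},
\]
because right multiplication by a fixed group element is a bijection of $S_n$ and $\pi$ is uniform. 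Averaging over $\varepsilon$ then gives $\tilde\pi\sim\operatorname{Unif}(S_n)$.

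\emph{Second claim.} By definition, $\tau_\varepsilon$ maps the set $\{r_i,s_i\}$ onto itself for every $i$. Hence
\[
\{\tilde\pi(r_i),\tilde\pi(s_i)\}=\pi(\tau_\varepsilon(\{r_i,s_i\}))=\pi(\{r_i,s_i\}).
\]

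\emph{Third claim.} This is a two-case check using the identities
\[
e_{r_i}=\tfrac12(e_{r_i}+e_{s_i})+\tfrac12(e_{r_i}-e_{s_i}),\qquad e_{s_i}=\tfrac12(e_{r_i}+e_{s_i})-\tfrac12(e_{r_i}-e_{s_i}).
\]
If $\varepsilon_i=1$, the left side is $v_{\pi(r_i)}e_{r_i}+v_{\pi(s_i)}e_{s_i}$. Expanding with the two identities gives exactly the right side with $\varepsilon_i=1$.

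If $\varepsilon_i=-1$, the left side is $v_{\pi(s_i)}e_{r_i}+v_{\pi(r_i)}e_{s_i}$. The same expansion yields the symmetric part $\frac{v_{\pi(r_i)}+v_{\pi(s_i)}}{2}(e_{r_i}+e_{s_i})$ together with the antisymmetric part $\frac{v_{\pi(s_i)}-v_{\pi(r_i)}}{2}(e_{r_i}-e_{s_i})$, which is the right side with $\varepsilon_i=-1$.

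None of these steps is a real obstacle. The only point needing care is the first claim: one must condition on $\varepsilon$ and use that $\pi$ and $\varepsilon$ are independent, so that $\tau_\varepsilon$ acts as a fixed group element under the conditioning.
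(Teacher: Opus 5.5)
Your proposal is correct and follows essentially the same approach as the paper's proof. For the first claim you condition on $\varepsilon$ and use that $\pi\mapsto\pi\circ\tau_\varepsilon$ is a bijection of $S_n$, and for the second you use that $\tau_\varepsilon$ maps each pair $\{r_i,s_i\}$ to itself; for the third claim you write out the two-case expansion (checked correct) that the paper only asserts.
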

\begin{proof}
For each fixed $\varepsilon$, the map $\tau_\varepsilon$ is a permutation of $[n]$. Consequently, the map $\pi\mapsto\pi\circ\tau_\varepsilon$ is a bijection from $S_n$ to itself. Therefore, for every $\sigma \in S_n$,
\[
\mathbb{P}(\Tilde{\pi}=\sigma\,|\,\varepsilon)=\mathbb{P}\left(\pi=\sigma\circ\tau_{\varepsilon}^{-1}\,\middle|\,\varepsilon\right)=\frac{1}{n!},
\]
since $\varepsilon$ is independent of $\pi$ and $\pi \sim \operatorname{Unif}(S_n)$. Hence,
\[
\Tilde{\pi}\sim\operatorname{Unif}(S_n).
\]
For $1\le i\le N$, the permutation $\tau_\varepsilon$ either fixes $r_i,s_i$ or exchanges them. Hence, the unordered pair is preserved
\[
\{\Tilde{\pi}(r_i),\Tilde{\pi}(s_i)\}=\{\pi(r_i),\pi(s_i)\}.
\]
Moreover, the sign $\varepsilon_i$ records whether the two entries are kept or exchanged. Hence
\[
v_{\Tilde{\pi}(r_i)}e_{r_i}+v_{\Tilde{\pi}(s_i)}e_{s_i}=\frac{v_{\pi(r_i)}+v_{\pi(s_i)}}{2}(e_{r_i}+e_{s_i})+\varepsilon_i\frac{v_{\pi(r_i)}-v_{\pi(s_i)}}{2}(e_{r_i}-e_{s_i}).
\]
This completes the proof.
\end{proof}
\subsection{Permuted norm estimate}
To estimate the lower tail of $\|M^{-1}\Pi u\|_2$, we focus on non-almost-constant vectors. Combining Proposition \ref{constant estimate} with the upper bound for $s_n(Q_n)$, controls the probability that the least right singular vector $u$ is almost constant. It therefore remains to consider $u\in\mathbb{S}^{n-1}\setminus \operatorname{Cons}(\delta,\rho)$, for which we establish the following estimate.
\begin{lemma}\label{permutednormestimate}
Let $\delta,\rho\in(0,1)$. There exist constants
$C_{\ref{permutednormestimate}},c_{\ref{permutednormestimate}}>0$
and $n_{\ref{permutednormestimate}}\in\mathbb{N}$, depending only on
$\delta$ and $\rho$, such that the following holds. Let $n\ge n_{\ref{permutednormestimate}}$, and let $A$ be a deterministic nonzero $n\times n$ matrix. Let $\pi\sim\operatorname{Unif}(S_n)$ with the corresponding permutation
matrix $\Pi$. Then for any fixed $v\in \mathbb{S}^{n-1}\setminus \operatorname{Cons}(\delta,\rho)$,
\[
\mathbb{P}\left(
\|A\Pi v\|_2
\le
c_{\ref{permutednormestimate}}
\frac{\|AP_{\mathbf{1}_n^\perp}\|_{\mathrm{HS}}}{\sqrt n}
\right)
\le
C_{\ref{permutednormestimate}}
\exp\left(
-c_{\ref{permutednormestimate}}
\frac{\|AP_{\mathbf{1}_n^\perp}\|_{\mathrm{HS}}^2}
{\|A\|^2}
\right).
\]
\end{lemma}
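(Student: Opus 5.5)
Throughout, write $S:=\|AP_{\mathbf 1_n^\perp}\|_{\mathrm{HS}}^2$. The plan is to combine Lemmas \ref{matchingwithlargeweight}, \ref{Weightcaptured} and \ref{independentswapsondisjointpairs} and then apply the Hanson--Wright inequality to the Rademacher swap signs.

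\textbf{Step 1 (matching).} Apply Lemma~\ref{matchingwithlargeweight} to $A$ to obtain a matching $\mathcal M$ with $\lfloor n/2\rfloor$ pairs and $\sum_{\mathcal M}w_{ij}\ge S$, where $w_{ij}=\|A(e_i-e_j)\|_2^2\le 4\|A\|^2$. Lemma~\ref{Weightcaptured} needs $k\le\beta n/8$ edges. Partition $\mathcal M$ into $O(1/\beta)$ blocks of at most $\beta n/8$ edges each, and pick the block $\mathcal M_0$ of largest weight. Then $\sum_{\mathcal M_0}w\ge c\beta S$ and $w_{\max}\le 4\|A\|^2$.

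\textbf{Step 2 (crossing weight).} Take $\sigma_1,\sigma_2$ from Lemma~\ref{nonconstanseparated} for $v$, with $\beta=\delta/8$. Use $\Pi e_i=e_{\pi(i)}$ in the form $(\Pi v)$ evaluated through the index map: after possibly replacing $\pi$ by $\pi^{-1}$, which is also uniform, write $(\Pi v)_r=v_{\pi(r)}$. Call a pair $\{r,s\}\in\mathcal M_0$ \emph{crossing} if one of $\pi(r),\pi(s)$ lies in $\sigma_1$ and the other in $\sigma_2$. For a crossing pair, $|v_{\pi(r)}-v_{\pi(s)}|\ge\rho/\sqrt{2n}$. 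Lemma~\ref{Weightcaptured} shows that, outside an event of probability $\exp(-cS/\|A\|^2)$, the crossing pairs carry weight at least $c'S$.

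\textbf{Step 3 (Rademacher decomposition).} Introduce independent swaps along $\mathcal M_0$ as in Lemma~\ref{independentswapsondisjointpairs}. Since $\tilde\pi\stackrel{d}{=}\pi$, it suffices to bound the probability for $\tilde\pi$. The unordered pairs, and hence the crossing event from Step 2, are unchanged by the swaps. Conditional on $\pi$, the third bullet of Lemma~\ref{independentswapsondisjointpairs} gives
\[
A\tilde\Pi v=y+\sum_{i}\varepsilon_i a_i,\qquad a_i=\tfrac{v_{\pi(r_i)}-v_{\pi(s_i)}}{2}A(e_{r_i}-e_{s_i}),
\]
with $y$ fixed given $\pi$. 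Let $B$ be the matrix with columns $a_i$, so that $A\tilde\Pi v=y+B\varepsilon$. Then
\[
\mathbb E\|y+B\varepsilon\|_2^2=\|y\|_2^2+\|B\|_{\mathrm{HS}}^2\ge\|B\|_{\mathrm{HS}}^2 .
\]
For $\|B\|_{\mathrm{HS}}$, keep only the crossing pairs: $\|B\|_{\mathrm{HS}}^2\ge\frac{\rho^2}{8n}c'S$. For $\|B\|$, the columns of $B$ are $A$ applied to disjointly supported vectors $\tfrac{v_{\pi(r_i)}-v_{\pi(s_i)}}{2}(e_{r_i}-e_{s_i})$, so $\|B\|\le\|A\|\cdot\max_i|v_{\pi(r_i)}-v_{\pi(s_i)}|$. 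This maximum is only bounded by $2$ in general, which is too weak. To fix this, restrict the sum to the crossing pairs only and absorb the non-crossing pairs into $y$; this is legitimate because we condition on $\pi$. For crossing pairs, Lemma~\ref{nonconstanseparated} gives $|v_{\pi(r)}-v_{\pi(s)}|\le6/\sqrt{\delta n}$, hence $\|B\|\le C\|A\|/\sqrt{n}$.

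\textbf{Step 4 (small ball).} Apply the Hanson--Wright small-ball consequence: for a Rademacher vector $\varepsilon$ and any shift $y$,
\[
\mathbb P\bigl(\|y+B\varepsilon\|_2\le\tfrac12\|B\|_{\mathrm{HS}}\bigr)\le 2\exp\bigl(-c\|B\|_{\mathrm{HS}}^2/\|B\|^2\bigr).
\]
This follows by concentration of $\|y+B\varepsilon\|_2$ around $(\|y\|_2^2+\|B\|_{\mathrm{HS}}^2)^{1/2}$, e.g.\ via Lipschitz concentration for the convex function $\varepsilon\mapsto\|y+B\varepsilon\|_2$ (Talagrand), with Lipschitz constant $\|B\|$. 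With the bounds from Step 3, the exponent is at least $cS/\|A\|^2$, and the threshold is $\tfrac12\|B\|_{\mathrm{HS}}\ge c\sqrt{S/n}$. Adding the failure probability from Step 2 completes the bound.

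\textbf{Main obstacle.} The delicate point is in Steps 3--4: the sum must be restricted to crossing pairs so that both the Hilbert--Schmidt lower bound and the operator-norm upper bound on $B$ hold at the correct $1/\sqrt n$ scale. We must also ensure that Talagrand's concentration is valid for an arbitrary shift $y$; it is, because the function is convex and $\|B\|$-Lipschitz.
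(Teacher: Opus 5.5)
Your proposal is correct and follows essentially the same route as the paper. You use a heavy sub-matching with $\Theta(n)$ pairs (your heaviest block versus the paper's $\lfloor \gamma n\rfloor$ heaviest pairs), Lemma~\ref{Weightcaptured} for the crossing weight, Rademacher swaps, and the bounds $\|B\|_{\mathrm{HS}}^2\gtrsim \|AP_{\mathbf 1_n^\perp}\|_{\mathrm{HS}}^2/n$ and $\|B\|\lesssim \|A\|/\sqrt{n}$ obtained by keeping only the crossing pairs. One small imprecision: you must also condition on the non-crossing signs, not just on $\pi$, which is harmless because your small-ball bound is uniform in $y$. The only real difference is how you treat the shift: the paper projects onto $\operatorname{span}\{a\}^{\perp}$, losing one operator-norm term from the trace (hence its case split on the size of $\|AP_{\mathbf 1_n^\perp}\|_{\mathrm{HS}}^2/\|A\|^2$), and applies Hanson--Wright to the centered quadratic form, whereas your Talagrand convex-Lipschitz concentration handles an arbitrary shift directly, with the small-ratio regime absorbed into the constants.
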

\begin{proof}
By Lemma \ref{matchingwithlargeweight}, there exists a deterministic matching
$\mathcal{M}=\{m_i=\{r_i,s_i\}:1\le i\le N\}$, $N=\lfloor n/2\rfloor$, such that 
\[
\sum_{i=1}^Nw_{i}\ge\|AP_{\mathbf 1_n^\perp}\|_{\mathrm{HS}}^2,
\]
where $w_i=\|A(e_{r_i}-e_{s_i})\|_2^2\le2\|A\|^2$. 

We enumerate the pairs in $\mathcal{M}$ so that $w_1\ge w_2\dots\ge w_N$. For definiteness, write $m_i=\{r_i,s_i\}$ with $r_i>s_i$, while still regarding $m_i$ as an unordered pair.

Let
\[
\beta=\frac{\delta}{8},\quad \gamma=\min\left\{\frac{\beta}{8}, \frac{1}{16}\right\},\quad k=\lfloor \gamma n\rfloor.
\]
Enlarging $n_{\ref{permutednormestimate}}$, if necessary, we may assume that $n\ge2/\gamma$. Hence, $\gamma n/2\le k\le N$. Then we obtain
\begin{equation}\label{Wkbound}
    W_k=\sum_{i=1}^k w_i\ge\frac{k}{N}\sum_{i=1}^Nw_i\ge\gamma\|AP_{\mathbf{1}_{n}^\perp}\|_{\mathrm{HS}}^2.
\end{equation}
 Let $v\in\mathbb{S}^{n-1}\setminus\operatorname{Cons(\delta,\rho)}$. By Lemma \ref{nonconstanseparated}, there exist disjoint subsets $I,J\subset[n]$ such that
 \begin{equation}\label{coordinateseparate}
      |I|,|J|\ge \frac{\delta n}{8}=\beta n,
\qquad \text{and} \qquad
\frac{\rho}{\sqrt{2n}}
\le |v_i-v_j|
\le \frac{6}{\sqrt{\delta n}}
\quad
\forall i\in I,\ \forall j\in J.
 \end{equation}
 Let $\pi \sim \operatorname{Unif}(S_n)$, and let $\varepsilon_1,\dots,\varepsilon_N$ be independent Rademacher random variables, independent of $\pi$. Let $\Tilde{\pi}=\pi\circ\tau_\varepsilon$. By Lemma \ref{independentswapsondisjointpairs}, $\Tilde{\pi}\sim \operatorname{Unif}(S_n)$, since the uniform distribution on $S_n$ is invariant under inversion, both $\Tilde{\Pi}$ and $\Tilde{\Pi}^T$ have the same distribution as $\Pi$. 
 
 Let $X_{i}=\mathbf{1}_{\{|\{{\pi}(r_i),{\pi}(s_i)\}\cap I|=|\{{\pi}(r_i),{\pi}(s_i)\}\cap J|=1\}}$,
 \[\mathcal{G}:=\{i\in[k]:X_i=1\},\quad \text{and}\quad \mathcal{E}:=\left\{\sum_{i\in \mathcal{G} }w_i\ge c_{\ref{Weightcaptured}}W_k\right\}.\]
 Applying Lemma \ref{Weightcaptured} with $w_{\max}=2\|A\|^2$, we obtain
 \begin{equation}\label{eventbound}
\mathbb{P}(\mathcal{E}^c)\le\mathbb{P}\left(\sum_{i=1}^kw_iX_i\le c_{\ref{Weightcaptured}}W_k\right)\le\exp \left(-c_{\ref{Weightcaptured}}\frac{W_k}{w_{\max}}\right)\le\exp\left(-\frac{c_{\ref{Weightcaptured}}\gamma\|AP_{\mathbf{1}_{n}^\perp}\|_{\mathrm{HS}}^2}{2\|A\|^2}\right).
\end{equation}
Fix a realization of $\pi$ for which $\mathcal{E}$ holds. Then $\mathcal{G}$ is a deterministic index set. We further fix a realization of $(\varepsilon_i)_{i\in[N]\setminus\mathcal{G}}$. Then the remaining variables $(\varepsilon_i)_{i\in \mathcal{G}}$ are still independent Rademacher random variables. For $i\in\mathcal{G}$, let $\xi_i:=\varepsilon_{i}\operatorname{sgn}(v_{\pi(r_i)}-v_{\pi(s_i)})$, $(\xi_i)_{i\in\mathcal{G}}$ are independent Rademacher random variables, under the above condition. 
Let
\[
d_i:=\frac{|v_{\pi(r_i)}-v_{\pi(s_i)}|}{2}, 
\]
by (\ref{coordinateseparate}), we have
\begin{equation}\label{dibound}
    \frac{\rho}{2\sqrt{2n}}\le d_i \le\frac{3}{\sqrt{\delta n}}.
\end{equation}

Following Lemma \ref{independentswapsondisjointpairs}, for $i\in\mathcal{G}$,
\begin{align*}
v_{\Tilde{\pi}(r_i)}e_{r_i}+v_{\Tilde{\pi}(s_i)}e_{s_i}&=\frac{v_{\pi(r_i)}+v_{\pi(s_i)}}{2}(e_{r_i}+e_{s_i})+\varepsilon_i\frac{v_{\pi(r_i)}-v_{\pi(s_i)}}{2}(e_{r_i}-e_{s_i})\\
&=\frac{v_{\pi(r_i)}+v_{\pi(s_i)}}{2}(e_{r_i}+e_{s_i})+\xi_id_i(e_{r_i}-e_{s_i}).
\end{align*}
Thus,
\[
A\Tilde{\Pi}^Tv=A\sum_{i=1}^nv_{\Tilde{\pi}(i)}e_i=a+\sum_{i\in\mathcal{G}}\xi_id_iA(e_{r_i}-e_{s_i}),
\]
where $a$ is a deterministic vector and the only remaining randomness is in the $(\xi_i)_{i\in\mathcal{G}}$.

Let $P$ be the orthogonal projection matrix onto $\operatorname{span}\{a\}^{\perp}$, then we obtain
\[
\left\|a+\sum_{i\in\mathcal{G}}\xi_id_iA(e_{r_i}-e_{s_i})\right\|_2\ge \left\|\sum_{i\in\mathcal{G}}\xi_id_iPA(e_{r_i}-e_{s_i})\right\|_2.
\]
Let $B$ be the matrix whose columns are $d_iPA(e_{r_i}-e_{s_i})$, indexed by $i\in\mathcal{G}$, and let $\xi:=(\xi_i)_{i\in\mathcal{G}}$ be the corresponding random vector. Note that $\|B\xi\|_2\le\|A\Tilde{\Pi}^Tv\|_2$.

By (\ref{Wkbound}) and (\ref{dibound}),
\begin{align}\label{Atrbound}
\operatorname{tr}\left(A\sum_{i\in\mathcal{G}}d_i^2(e_{r_i}-e_{s_i})(e_{r_i}-e_{s_i})^TA^T\right)&=\sum_{i\in\mathcal{G}}d_i^2\|A(e_{r_i}-e_{s_i})\|_2^2=\sum_{i\in\mathcal{G}}d_i^2w_i\notag\\&\ge\frac{\rho^2c_{\ref{Weightcaptured}}\gamma}{8n}\|AP_{\mathbf{1}_n^\perp}\|_{\mathrm{HS}}^2.
\end{align}
Since the pair $m_i=\{r_i,s_i\}$ are disjoint, for distinct $i,j\in\mathcal{G}$, $\langle e_{r_i}-e_{s_i},e_{r_j}-e_{s_j}\rangle=0$. Thus, by (\ref{dibound}),
\[
\left\|\sum_{i\in\mathcal{G}}d_i^2(e_{r_i}-e_{s_i})(e_{r_i}-e_{s_i})^T\right\|\le \frac{9}{\delta n}\left\|\sum_{i\in\mathcal{G}}(e_{r_i}-e_{s_i})(e_{r_i}-e_{s_i})^T\right\|\le\frac{18}{\delta n},
\]
then we have
\begin{equation}\label{Aopbound}
\|B^{T}B\|=\|BB^{T}\|\le\left\|A\sum_{i\in\mathcal{G}}d_i^2(e_{r_i}-e_{s_i})(e_{r_i}-e_{s_i})^TA^T\right\|\le\frac{18}{\delta n} \|A\|^2.
\end{equation}
Set 
\[
r_0(\delta,\rho)=\max\left\{2,\frac{288}{\delta\rho^2c_{\ref{Weightcaptured}}\gamma}\right\},
\]
we first consider the case
\[
\frac{\|AP_{\mathbf{1}_n^\perp}\|_{\mathrm{HS}}^2}
{\|A\|^2}\ge r_0(\delta,\rho).
\]
The remaining case will be handled by adjusting the constants.  Combining (\ref{Atrbound}) and (\ref{Aopbound}), we obtain
\begin{align}\label{Btrbound}
\mathbb{E}\left(\xi^TB^TB\xi\right)&=\operatorname{tr}\left(B^TB\right)\notag\\&=\operatorname{tr}\left(P A\sum_{i\in\mathcal{G}}d_i^2(e_{r_i}-e_{s_i})(e_{r_i}-e_{s_i})^TA^TP\right)\notag\\
&=\operatorname{tr}\left( A\sum_{i\in\mathcal{G}}d_i^2(e_{r_i}-e_{s_i})(e_{r_i}-e_{s_i})^TA^TP\right)\notag\\
&\ge\operatorname{tr}\left(A\sum_{i\in\mathcal{G}}d_i^2(e_{r_i}-e_{s_i})(e_{r_i}-e_{s_i})^TA^T\right)-\left\|A\sum_{i\in\mathcal{G}}d_i^2(e_{r_i}-e_{s_i})(e_{r_i}-e_{s_i})^TA^T\right\|\notag\\
&\ge \frac{\rho^2c_{\ref{Weightcaptured}}\gamma}{8n}\|AP_{\mathbf{1}_n^\perp}\|_{\mathrm{HS}}^2-\frac{18}{\delta n} \|A\|^2\notag\\
&\ge \frac{c_1}{n}\|AP_{\mathbf{1}_n^\perp}\|_{\mathrm{HS}}^2.
\end{align}
The first inequality follows from the fact that $P=\mathbf{I}_n-aa^T/\|a\|_2^2$ when \(a\neq0\), and $P=\mathbf{I}_n$ otherwise. Here $c_1=\rho^2c_{\ref{Weightcaptured}}\gamma/16$.

Since $B^TB$ is positive semidefinite,
\[
\|B^TB\|_{\mathrm{HS}}^2\le\|B^TB\|\operatorname{tr}(B^TB).
\]
Hence, by using (\ref{Aopbound}),  (\ref{Btrbound}) and the Hanson-Wright inequality \cite[Theorem 6.2.2]{highdimensional}, we obtain
\begin{align}
\mathbb{P}_{\xi}\left(\|B\xi\|_2\le \frac{\sqrt{c_1}\|AP_{\mathbf{1}_n^\perp}\|_{\mathrm{HS}}}{\sqrt{2n}}\right)&\le \mathbb{P}_{\xi}\left(|\xi^TB^TB\xi-\mathbb{E}(\xi^TB^TB\xi)|\ge\frac{\operatorname{tr}(B^{T}B)}{2}\right)\notag\\
&\le 2\exp\left(-c\frac{\operatorname{tr}(B^TB)}{\|B^TB\|}\right)\notag\\
&\le 2\exp\left(-c_2\frac{\|AP_{\mathbf{1}_n^\perp}\|_{\mathrm{HS}}^2}{\|A\|^2}\right),\notag
\end{align}
where the constant $c_2>0$ depends only on $\delta,\rho$. 
Then we obtain
\begin{align}\label{conditionnormbound}
&\mathbb{P}\left(\left\{\|A\Tilde{\Pi}^T v\|_2
\le
\frac{\sqrt{c_1}\|AP_{\mathbf{1}_n^\perp}\|_{\mathrm{HS}}}{\sqrt{2n}}\right\}\cap\mathcal{E}\right)\notag\\&\le\mathbb{E}_{\pi}\left(\mathbf{1}_{\{\mathcal{E}\}}\mathbb{E}_{(\varepsilon_i)_{i\in[N]\setminus\mathcal{G}}}\left(\mathbb{P}_{(\varepsilon_i)_{i\in\mathcal{G}}}\left(\|B\xi\|_2\le \frac{\sqrt{c_1}\|AP_{\mathbf{1}_n^\perp}\|_{\mathrm{HS}}}{\sqrt{2n}}\right) \right)\right)\notag\\
&\le 2\exp\left(-c_2\frac{\|AP_{\mathbf{1}_n^\perp}\|_{\mathrm{HS}}^2}{\|A\|^2}\right).
\end{align}

Combining this conditional argument (\ref{conditionnormbound}) with (\ref{eventbound}), yields
\begin{align}
\mathbb{P}\left(
\|A\Tilde{\Pi}^T v\|_2
\le
\frac{\sqrt{c_1}\|AP_{\mathbf{1}_n^\perp}\|_{\mathrm{HS}}}{\sqrt{2n}}
\right)
&\le\mathbb{P}\left(\left\{\|A\Tilde{\Pi}^T v\|_2
\le
\frac{\sqrt{c_1}\|AP_{\mathbf{1}_n^\perp}\|_{\mathrm{HS}}}{\sqrt{2n}}\right\}\cap\mathcal{E}\right)+\mathbb{P}(\mathcal{E}^c)\notag\\
&\le 2\exp\left(-c_2\frac{\|AP_{\mathbf{1}_n^\perp}\|_{\mathrm{HS}}^2}{\|A\|^2}\right) +\exp\left(-\frac{c_{\ref{Weightcaptured}}\gamma\|AP_{\mathbf{1}_{n}^\perp}\|_{\mathrm{HS}}^2}{2\|A\|^2}\right)\notag\\
&\le C_{\ref{permutednormestimate}}
\exp\left(
-c_{\ref{permutednormestimate}}
\frac{\|AP_{\mathbf{1}_n^\perp}\|_{\mathrm{HS}}^2}
{\|A\|^2}
\right),
\end{align}
where $C_{\ref{permutednormestimate}}\ge3$ is a sufficiently large constant and $c_{\ref{permutednormestimate}}=\min\{c_2,c_{\ref{Weightcaptured}}\gamma/2,\sqrt{c_1/2}\}$. 

Consequently, in the case
\[
\frac{\|AP_{\mathbf{1}_n^\perp}\|_{\mathrm{HS}}^2}
{\|A\|^2}\ge r_0(\delta,\rho),
\]
since $\Pi\stackrel{d}{=}\Tilde{\Pi}^T$,
we obtain
\[
\mathbb{P}\left(
\|A\Pi v\|_2
\le
c_{\ref{permutednormestimate}}
\frac{\|AP_{\mathbf{1}_n^\perp}\|_{\mathrm{HS}}}{\sqrt n}
\right)\le C_{\ref{permutednormestimate}}
\exp\left(
-c_{\ref{permutednormestimate}}
\frac{\|AP_{\mathbf{1}_n^\perp}\|_{\mathrm{HS}}^2}
{\|A\|^2}
\right).
\]
In the remaining case,
\[
\frac{\|AP_{\mathbf{1}_n^\perp}\|_{\mathrm{HS}}^2}
{\|A\|^2}< r_0(\delta,\rho),
\]
if necessary, the same bounds follows after enlarging $C_{\ref{permutednormestimate}}\ge \exp(c_{\ref{permutednormestimate}}r_0)$. This completes the proof.
\end{proof}
\begin{corollary}\label{completepermutednormestimate}
Let $\delta,\rho\in(0,1)$. There exist constants
$C_{\ref{completepermutednormestimate}},c_{\ref{completepermutednormestimate}}>0$
and $n_{\ref{completepermutednormestimate}}\in\mathbb{N}$, depending only on
$\delta$ and $\rho$, such that the following holds. Let $n\ge n_{\ref{completepermutednormestimate}}$, and let $A$ be a deterministic nonzero $n\times n$ matrix. Let $\pi\sim\operatorname{Unif}(S_n)$ with the corresponding permutation
matrix $\Pi$. Then for any fixed $v\in \mathbb{S}^{n-1}\setminus \operatorname{Cons}(\delta,\rho)$,
\[
\mathbb{P}\left(
\|A\Pi v\|_2
\le
c_{\ref{completepermutednormestimate}}
\frac{\|A\|_{\mathrm{HS}}}{\sqrt n}
\right)
\le
C_{\ref{completepermutednormestimate}}
\exp\left(
-c_{\ref{completepermutednormestimate}}\operatorname{sr}(A)
\right).
\]
\end{corollary}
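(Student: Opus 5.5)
The plan is to deduce the corollary from Lemma~\ref{permutednormestimate} by comparing $\|AP_{\mathbf 1_n^\perp}\|_{\mathrm{HS}}$ with $\|A\|_{\mathrm{HS}}$. Since $P_{\mathbf 1_n^\perp}+P_{\mathbf 1_n}=\mathbf I_n$ and the two projections have orthogonal ranges,
\[
\|A\|_{\mathrm{HS}}^2=\|AP_{\mathbf 1_n^\perp}\|_{\mathrm{HS}}^2+\|AP_{\mathbf 1_n}\|_{\mathrm{HS}}^2 .
\]
Because $P_{\mathbf 1_n}$ has rank one, $\|AP_{\mathbf 1_n}\|_{\mathrm{HS}}=\|An^{-1/2}\mathbf 1_n\|_2\le\|A\|$. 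Hence
\[
\|AP_{\mathbf 1_n^\perp}\|_{\mathrm{HS}}^2\ge\|A\|_{\mathrm{HS}}^2-\|A\|^2=\|A\|^2(\operatorname{sr}(A)-1).
\]

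We split into two cases according to the size of $\operatorname{sr}(A)$.

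\emph{Case $\operatorname{sr}(A)\ge 2$.} Then $\operatorname{sr}(A)-1\ge\operatorname{sr}(A)/2$, so
\[
\|AP_{\mathbf 1_n^\perp}\|_{\mathrm{HS}}^2\ge\tfrac12\|A\|_{\mathrm{HS}}^2 .
\]
Set $c_{\ref{completepermutednormestimate}}:=\min\{c_{\ref{permutednormestimate}}/\sqrt2,\;c_{\ref{permutednormestimate}}/2\}$. The event $\{\|A\Pi v\|_2\le c_{\ref{completepermutednormestimate}}\|A\|_{\mathrm{HS}}/\sqrt n\}$ is contained in the event $\{\|A\Pi v\|_2\le c_{\ref{permutednormestimate}}\|AP_{\mathbf 1_n^\perp}\|_{\mathrm{HS}}/\sqrt n\}$. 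By Lemma~\ref{permutednormestimate}, the latter has probability at most
\[
C_{\ref{permutednormestimate}}\exp\bigl(-c_{\ref{permutednormestimate}}\|AP_{\mathbf 1_n^\perp}\|_{\mathrm{HS}}^2/\|A\|^2\bigr)\le C_{\ref{permutednormestimate}}\exp\bigl(-\tfrac{c_{\ref{permutednormestimate}}}{2}\operatorname{sr}(A)\bigr).
\]
This is the required bound with $n_{\ref{completepermutednormestimate}}=n_{\ref{permutednormestimate}}$.

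\emph{Case $\operatorname{sr}(A)<2$.} The claimed bound becomes trivial once we take $C_{\ref{completepermutednormestimate}}\ge e^{2c_{\ref{completepermutednormestimate}}}$. Then $C_{\ref{completepermutednormestimate}}e^{-c_{\ref{completepermutednormestimate}}\operatorname{sr}(A)}\ge1$, and a probability is always at most one. The same enlargement also covers the degenerate situation $AP_{\mathbf 1_n^\perp}=0$, in which Lemma~\ref{permutednormestimate} gives nothing but $\operatorname{sr}(A)\le 1$.

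I do not expect a genuine obstacle here. The only point requiring care is the rank-one loss $\|AP_{\mathbf 1_n}\|_{\mathrm{HS}}\le\|A\|$: it must be absorbed into $\|A\|_{\mathrm{HS}}$, and this is exactly why the small stable rank regime is handled separately by enlarging the constant.
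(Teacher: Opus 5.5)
Your proposal is correct and follows the paper's proof essentially step for step. You use the same identity $\|AP_{\mathbf 1_n^\perp}\|_{\mathrm{HS}}^2=\|A\|_{\mathrm{HS}}^2-\|An^{-1/2}\mathbf 1_n\|_2^2\ge\|A\|_{\mathrm{HS}}^2-\|A\|^2$, the same split at $\operatorname{sr}(A)\ge2$ with the new small constant equal to half of Lemma~\ref{permutednormestimate}'s, and the same trivial bound when $\operatorname{sr}(A)<2$. The only cosmetic difference is the large constant: you enlarge it explicitly (implicitly to $\max\{C_{\ref{permutednormestimate}},e^{2c}\}$), whereas the paper keeps $C_{\ref{permutednormestimate}}$ and relies on $C_{\ref{permutednormestimate}}\ge e^{c_{\ref{permutednormestimate}}r_0}$ from the lemma's proof.
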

\begin{proof}
    Note that 
    \begin{align*}
\|AP_{\mathbf{1}_{n}^{\perp}}\|_{\mathrm{HS}}^2=\operatorname{tr}\left(AP_{\mathbf{1}_{n}^{\perp}}A^T\right)=\operatorname{tr}\left(AA^T\right)-\operatorname{tr}\left(A\frac1n \mathbf 1_n\mathbf 1_n^TA^T\right)&=\|A\|_{\mathrm{HS}}^2-\left\|A\frac{1}{\sqrt{n}}\mathbf{1}_n\right\|_2^2\\&\ge\|A\|_{\mathrm{HS}}^2-\|A\|^2.
\end{align*}
If $\operatorname{sr}(A)\ge2$, we have
\[
\frac{\|AP_{\mathbf{1}_{n}^{\perp}}\|_{\mathrm{HS}}^2}{\|A\|^2}\ge\frac{\|A\|_{\mathrm{HS}}^2-\|A\|^2}{\|A\|^2}= \operatorname{sr}(A)-1\ge\frac{1}{2}\operatorname{sr}(A),
\]
and $\|AP_{\mathbf{1}_{n}^{\perp}}\|_{\mathrm{HS}}^2\ge1/2\|A\|^2\operatorname{sr}(A)=1/2\|A\|_{\mathrm{HS}}^2$. By Lemma \ref{permutednormestimate}, we obtain
\[
\mathbb{P}\left(
\|A\Pi v\|_2
\le
c_{\ref{permutednormestimate}}
\frac{\sqrt{2}\|A\|_{\mathrm{HS}}}{2\sqrt n}
\right)
\le
C_{\ref{permutednormestimate}}
\exp\left(
-c_{\ref{permutednormestimate}}
\frac{1}
{2}\operatorname{sr}(A)
\right).
\]
If $\operatorname{sr}(A)<2$, then, since $r_0>1$ is a sufficiently large constant number, $C_{\ref{permutednormestimate}}\ge\exp(c_{\ref{permutednormestimate}}r_0)\ge\exp(c_{\ref{permutednormestimate}})$, the desired probability bound is trivial. Let $C_{\ref{completepermutednormestimate}}=C_{\ref{permutednormestimate}},c_{\ref{completepermutednormestimate}}=c_{\ref{permutednormestimate}}/2,n_{\ref{completepermutednormestimate}}=n_{\ref{permutednormestimate}}$. This completes the proof.
\end{proof}

\subsection{Proof of Theorem 1.3}
\begin{proof}
First, we assume $n\geq n_0(p)$, sufficiently large. When $n<n_0$, the case can be absorbed by enlarging the constants.

Following Proposition \ref{constant estimate}, there exist $C_{\ref{constant estimate}},c_{\ref{constant estimate}},\delta_{\ref{constant estimate}},\rho_{\ref{constant estimate}}$ such that, when $n$ is sufficiently large,
\begin{align}\label{E1probabilitybound}
    \mathbb{P}(\mathcal{E}_1^c)\le e^{-c_{\ref{constant estimate}}n},
\end{align}
where $\mathcal{E}_1:=\{\inf_{v\in \operatorname{Cons}(\delta_{\ref{constant estimate}},\rho_{\ref{constant estimate}})}\|{Q_n v}\|_2\ge C_{\ref{constant estimate}}\sqrt{pn}\}$.

Choose a measurable function $u=u(Q_n)\in\mathbb S^{n-1}$ such that $\|Q_nu\|_2=s_n(Q_n)$. Let $\pi\sim\operatorname{Unif}(S_n)$ be independent of $Q_n$, and let $\Pi$ denote its permutation matrix. Then $u$ and $\Pi$ are independent. Set $\kappa=C_{\ref{constant estimate}}\sqrt{p}/2$, and let $\varepsilon>0$.

Let
\[
\mathcal{E}_2:=\left\{s_{n}(Q_n)\le\min\left\{\frac{c_{\ref{completepermutednormestimate}}}{\varepsilon^2\sqrt{n}},\kappa\sqrt{n}\right\}\right\}.
\]
Applying the upper bound in \cite[Theorem 1.1]{lianupper}, we obtain
\begin{align}\label{E2probabilitybound}
\mathbb{P}(\mathcal{E}_2^c)\le C_1\left(\Tilde{t} +\frac{1}{\sqrt{d}}\right)\le C_1\left(\frac{\varepsilon p^{1/4}}{\sqrt{c_{\ref{completepermutednormestimate}}}}+\frac{p^{1/4}}{\sqrt{\kappa n}}+\frac{1}{\sqrt{d}}\right)\le C_1'\left(\varepsilon+\frac{1}{\sqrt{d}}\right),
\end{align}

where $\Tilde{t}=\max\{\varepsilon p^{1/4}/\sqrt{c_{\ref{completepermutednormestimate}}},p^{1/4}/\sqrt{\kappa n}\}$ and $C_1'>0$ depends only on $p$. 

On $\mathcal{E}_1\cap\mathcal{E}_2$, since
\[
\|Q_nu\|_2=s_n(Q_n)\le \kappa\sqrt{n}< C_{\ref{constant estimate}}\sqrt{pn},
\]
hence, $u\in \mathbb{S}^{n-1}\setminus\operatorname{Cons}(\delta_{\ref{constant estimate}},\rho_{\ref{constant estimate}})$. 

Let
\[
\mathcal{E}_3^c:=\left\{\left\{\|M^{-1}\Pi u\|_2\le c_{\ref{completepermutednormestimate}}\frac{\|M^{-1}\|_{\mathrm{HS}}}{\sqrt{n}}\right\}\cap\{u\in \mathbb{S}^{n-1}\setminus\operatorname{Cons}(\delta_{\ref{constant estimate}},\rho_{\ref{constant estimate}})\}\right\},
\]
by Corollary \ref{completepermutednormestimate} and the independence of $u$ and $\Pi$,
\begin{align}\label{E3probabilitybound}
    \mathbb{P}(\mathcal{E}_3^c)&=\mathbb{E}\left(\mathbf{1}_{\{u\notin\operatorname{Cons}(\delta_{\ref{constant estimate}},\rho_{\ref{constant estimate}})\}}\mathbb{P}\left(\|M^{-1}\Pi u\|_2\le c_{\ref{completepermutednormestimate}}\frac{\|M^{-1}\|_{\mathrm{HS}}}{\sqrt{n}}\,\middle|\,u\right)\right)\notag\\&\le C_{\ref{completepermutednormestimate}}
\exp\left(
-c_{\ref{completepermutednormestimate}}\operatorname{sr}(M^{-1})\right).
\end{align}
On $\mathcal{E}_1\cap\mathcal{E}_{2}\cap\mathcal{E}_3$, we have
\[
s_n(Q_n\Pi^TM)\le\frac{s_n(Q_n)}{\|M^{-1}\Pi u\|_2}\le\frac{\min\left\{c_{\ref{completepermutednormestimate}}/(\varepsilon^2\sqrt{n}),\kappa\sqrt{n}\right\}}{c_{\ref{completepermutednormestimate}}\|M^{-1}\|_{\mathrm{HS}}/\sqrt{n}}\le\frac{1}{\varepsilon^2\|M^{-1}\|_{\mathrm{HS}}}.
\]
Combining (\ref{E1probabilitybound}), (\ref{E2probabilitybound}), (\ref{E3probabilitybound}), we obtain
\begin{align} \mathbb{P}\left(s_n(Q_nM)>\frac{1}{\varepsilon^2\|M^{-1}\|_{\mathrm{HS}}}\right)&=\mathbb{P}\left(s_n(Q_n\Pi^TM)>\frac{1}{\varepsilon^2\|M^{-1}\|_{\mathrm{HS}}}\right)\notag\\
&\le\mathbb{P}(\mathcal{E}_1^c\cup\mathcal{E}_2^c\cup\mathcal{E}_3^c)\notag\\
&\le e^{-c_{\ref{constant estimate}}n}+C_1'\left(\varepsilon+\frac{1}{\sqrt{d}}\right)+C_{\ref{completepermutednormestimate}}
\exp\left(
-c_{\ref{completepermutednormestimate}}\operatorname{sr}(M^{-1})\right)\notag\\
&\le C_{\ref{theorem 1.2}} \left(\varepsilon +\frac{1}{\sqrt{d}}+e^{-c_{\ref{theorem 1.2}} \operatorname{sr}(M^{-1})}\right),\notag
\end{align}
where we choose $C_{\ref{theorem 1.2}},c_{\ref{theorem 1.2}}>0$ depending only on $p$, and the term $e^{-c_{\ref{constant estimate}}n}$ is absorbed into the $1/\sqrt{d}$ since $d=pn$. This completes the proof.
\end{proof}

\section*{Declaration of AI use}
During the preparation of this manuscript, the authors used OpenAI's ChatGPT and Codex to assist with language editing, proofreading, and the organization and presentation of the manuscript. All mathematical ideas, results, proofs, and conclusions are those of the authors. The authors independently checked all proofs and take full responsibility for the mathematical claims and the final manuscript.

\printbibliography

@article {numericalinverting,
    AUTHOR = {von Neumann, John and Goldstine, H. H.},
     TITLE = {Numerical inverting of matrices of high order},
   JOURNAL = {Bull. Amer. Math. Soc.},
    VOLUME = {53},
      YEAR = {1947},
     PAGES = {1021--1099},
       DOI = {10.1090/S0002-9904-1947-08909-6},
       URL = {https://doi.org/10.1090/S0002-9904-1947-08909-6},
}

@book {collectedworks,
   author    = {von Neumann, John},
  title     = {Collected works. Vol. V: Design of computers, theory of automata and numerical analysis},
  publisher = {The Macmillan Company},
  address   = {New York},
  year      = {1963}
}

@article {eigenvalues,
    AUTHOR = {Edelman, Alan},
     TITLE = {Eigenvalues and condition numbers of random matrices},
   JOURNAL = {SIAM J. Matrix Anal. Appl.},
    VOLUME = {9},
      YEAR = {1988},
    NUMBER = {4},
     PAGES = {543--560},
       DOI = {10.1137/0609045},
       URL = {https://doi.org/10.1137/0609045},
}

@article {conditionnumbers,
    AUTHOR = {Szarek, Stanis{\l}aw J.},
     TITLE = {Condition numbers of random matrices},
   JOURNAL = {J. Complexity},
    VOLUME = {7},
      YEAR = {1991},
    NUMBER = {2},
     PAGES = {131--149},
       DOI = {10.1016/0885-064X(91)90002-F},
       URL = {https://doi.org/10.1016/0885-064X(91)90002-F},
}

@article {thelittle,
    AUTHOR = {Rudelson, Mark and Vershynin, Roman},
     TITLE = {The {L}ittlewood-{O}fford problem and invertibility of random
              matrices},
   JOURNAL = {Adv. Math.},
    VOLUME = {218},
      YEAR = {2008},
    NUMBER = {2},
     PAGES = {600--633},
       DOI = {10.1016/j.aim.2008.01.010},
       URL = {https://doi.org/10.1016/j.aim.2008.01.010},
}

@article {coveringof,
    AUTHOR = {Rebrova, Elizaveta and Tikhomirov, Konstantin},
     TITLE = {Coverings of random ellipsoids, and invertibility of matrices
              with i.i.d. heavy-tailed entries},
   JOURNAL = {Israel J. Math.},
    VOLUME = {227},
      YEAR = {2018},
    NUMBER = {2},
     PAGES = {507--544},
       DOI = {10.1007/s11856-018-1732-y},
       URL = {https://doi.org/10.1007/s11856-018-1732-y},
}

@article {jaindiscrete,
    AUTHOR = {Jain, Vishesh and Sah, Ashwin and Sawhney, Mehtaab},
     TITLE = {Singularity of discrete random matrices},
   JOURNAL = {Geom. Funct. Anal.},
    VOLUME = {31},
      YEAR = {2021},
    NUMBER = {5},
     PAGES = {1160--1218},
       DOI = {10.1007/s00039-021-00580-6},
       URL = {https://doi.org/10.1007/s00039-021-00580-6},
}

@article {symmetricsubgaussian,
    AUTHOR = {Campos, Marcelo and Jenssen, Matthew and Michelen, Marcus and
              Sahasrabudhe, Julian},
     TITLE = {The least singular value of a random symmetric matrix},
   JOURNAL = {Forum Math. Pi},
    VOLUME = {12},
      YEAR = {2024},
     PAGES = {Paper No. e3, 69},
       DOI = {10.1017/fmp.2023.29},
       URL = {https://doi.org/10.1017/fmp.2023.29},
}

@article {cookregular,
    AUTHOR = {Cook, Nicholas A.},
     TITLE = {On the singularity of adjacency matrices for random regular
              digraphs},
   JOURNAL = {Probab. Theory Related Fields},
    VOLUME = {167},
      YEAR = {2017},
    NUMBER = {1-2},
     PAGES = {143--200},
       DOI = {10.1007/s00440-015-0679-8},
       URL = {https://doi.org/10.1007/s00440-015-0679-8},
}

@article {jainregular,
    AUTHOR = {Jain, Vishesh and Sah, Ashwin and Sawhney, Mehtaab},
     TITLE = {The smallest singular value of dense random regular digraphs},
   JOURNAL = {Int. Math. Res. Not. IMRN},
      YEAR = {2022},
    NUMBER = {24},
     PAGES = {19300--19334},
       DOI = {10.1093/imrn/rnab247},
       URL = {https://doi.org/10.1093/imrn/rnab247},
}

@article {jainapprox,
    AUTHOR = {Jain, Vishesh},
     TITLE = {Approximate {S}pielman--{T}eng theorems for the least singular value of random combinatorial matrices},
   JOURNAL = {Israel J. Math.},
    VOLUME = {242},
      YEAR = {2021},
    NUMBER = {1},
     PAGES = {461--500},
       DOI = {10.1007/s11856-021-2144-y},
       URL = {https://doi.org/10.1007/s11856-021-2144-y},
}

@article {litvakdigraphs,
    AUTHOR = {Litvak, Alexander E. and Lytova, Anna and Tikhomirov, Konstantin E. and Tomczak-Jaegermann, Nicole and Youssef, Pierre},
     TITLE = {Adjacency matrices of random digraphs: singularity and anti-concentration},
   JOURNAL = {J. Math. Anal. Appl.},
    VOLUME = {445},
      YEAR = {2017},
    NUMBER = {2},
     PAGES = {1447--1491},
       DOI = {10.1016/j.jmaa.2016.08.020},
       URL = {https://doi.org/10.1016/j.jmaa.2016.08.020},
}

@article {litvakshiftedregular,
    AUTHOR = {Litvak, Alexander E. and Lytova, Anna and Tikhomirov, Konstantin E. and Tomczak-Jaegermann, Nicole and Youssef, Pierre},
     TITLE = {The smallest singular value of a shifted {$d$}-regular random square matrix},
   JOURNAL = {Probab. Theory Related Fields},
    VOLUME = {173},
      YEAR = {2019},
    NUMBER = {3-4},
     PAGES = {1301--1347},
       DOI = {10.1007/s00440-018-0852-y},
       URL = {https://doi.org/10.1007/s00440-018-0852-y},
}

@article {nguyencombinatorial,
    AUTHOR = {Nguyen, Hoi H.},
     TITLE = {On the singularity of random combinatorial matrices},
   JOURNAL = {SIAM J. Discrete Math.},
    VOLUME = {27},
      YEAR = {2013},
    NUMBER = {1},
     PAGES = {447--458},
       DOI = {10.1137/110853157},
       URL = {https://doi.org/10.1137/110853157},
}

@article {lianupper,
    AUTHOR = {Li, Dongbin and Litvak, Alexander E. and Yu, Tingzhou},
     TITLE = {An upper bound on the smallest singular value of dense random
              combinatorial matrices},
   JOURNAL = {J. Complexity},
    VOLUME = {95},
      YEAR = {2026},
     PAGES = {Paper No. 102031, 17},
       DOI = {10.1016/j.jco.2026.102031},
       URL = {https://doi.org/10.1016/j.jco.2026.102031},
}

@article{lianlitvakyucircular,
  author        = {Li, Dongbin and Litvak, Alexander E. and Yu, Tingzhou},
  title         = {Least singular values of shifted sparse random combinatorial matrices},
  journal       = {arXiv preprint arXiv:2604.10446},
  year          = {2026},
  eprint        = {2604.10446},
  archiveprefix = {arXiv},
  primaryclass  = {math.PR},
  doi           = {10.48550/arXiv.2604.10446},
  url           = {https://arxiv.org/abs/2604.10446},
}

@article{yusigned,
  title={The smallest singular value of signed random combinatorial matrices},
  author={Yu, Kexin},
  journal={arXiv preprint arXiv:2604.11761},
  year={2026}
}

@article{litvak2004random,
  title   = {Random {E}uclidean embeddings in spaces of bounded volume ratio},
  author  = {Litvak, Alexander E. and Pajor, Alain and Rudelson, Mark and Tomczak-Jaegermann, Nicole and Vershynin, Roman},
  journal = {C. R. Math. Acad. Sci. Paris},
  volume  = {339},
  number  = {1},
  pages   = {33--38},
  year    = {2004}
}

@article{livshyts2021smallestTK,
  title   = {The smallest singular value of inhomogeneous square random matrices},
  author  = {Livshyts, Galyna V. and Tikhomirov, Konstantin and Vershynin, Roman},
  journal = {Ann. Probab.},
  volume  = {49},
  number  = {3},
  pages   = {1286--1309},
  year    = {2021},
  doi     = {10.1214/20-AOP1481}
}

@article{tao2009inverse,
  title   = {Inverse Littlewood-Offord theorems and the condition number of random discrete matrices},
  author  = {Tao, Terence and Vu, Van},
  journal = {Ann. of Math. (2)},
  volume  = {169},
  number  = {2},
  pages   = {595--632},
  year    = {2009},
  doi     = {10.4007/annals.2009.169.595}
}

@article{tao2010random,
  title   = {Random matrices: The distribution of the smallest singular values},
  author  = {Tao, Terence and Vu, Van},
  journal = {Geom. Funct. Anal.},
  volume  = {20},
  number  = {1},
  pages   = {260--297},
  year    = {2010},
  doi     = {10.1007/s00039-010-0057-8}
}

@article{rudelson2008invertibility,
  title={Invertibility of random matrices: norm of the inverse},
  author={Rudelson, Mark},
  journal={Ann. of Math. (2)},
  volume={168},
  number={2},
  pages={575--600},
  year={2008},
  doi={10.4007/annals.2008.168.575}
}

@article{rudelson2009smallest,
  title   = {Smallest singular value of a random rectangular matrix},
  author  = {Rudelson, Mark and Vershynin, Roman},
  journal = {Comm. Pure Appl. Math.},
  volume  = {62},
  number  = {12},
  pages   = {1707--1739},
  year    = {2009},
  doi     = {10.1002/cpa.20294}
}

@article{letwin2026smallest,
  title={On the smallest singular value of the product of random and deterministic matrices},
  author={Letwin, Brayden and Polavarapu, Achintya Raya},
  journal={arXiv preprint arXiv:2607.06785},
  year={2026}
}

@book{highdimensional,
 author = {Vershynin, Roman},
 title = {High-dimensional probability. {An} introduction with applications in data science},
 edition = {2nd edition},
 fseries = {Cambridge Series in Statistical and Probabilistic Mathematics},
 series = {Camb. Ser. Stat. Probab. Math.},
 volume = {58},
 isbn = {978-1-00-949064-1; 978-1-00-949067-2},
 year = {2026},
 publisher = {Cambridge: Cambridge University Press},
 language = {English},
 doi = {10.1017/9781009490672},
 zbMATH = {8141018},
 Zbl = {1579.60005}
}

@article{sah2025spielmanteng,
  author  = {Sah, Ashwin and Sahasrabudhe, Julian and Sawhney, Mehtaab},
  title   = {On the {Spielman--Teng} Conjecture},
  journal = {Geom. Funct. Anal.},
  volume  = {35},
  number  = {2},
  pages   = {633--671},
  year    = {2025},
  doi     = {10.1007/s00039-025-00707-z},
  url     = {https://doi.org/10.1007/s00039-025-00707-z}
}

@article{ferberjainluhsamotij,
  author  = {Ferber, Asaf and Jain, Vishesh and Luh, Kyle and Samotij, Wojciech},
  title   = {On the counting problem in inverse {Littlewood--Offord} theory},
  journal = {J. Lond. Math. Soc. (2)},
  volume  = {103},
  number  = {4},
  pages   = {1333--1362},
  year    = {2021},
  doi     = {10.1112/jlms.12409}
}

@article{shi2026signedupper,
  author  = {Shi, Yuxiao and Song, Zeyan and Yu, Kexin and Yu, Tingzhou},
  title   = {An upper bound on the smallest singular value of dense signed random combinatorial matrices},
  journal = {arXiv preprint arXiv:2610.33427},
  year    = {2026},
  url     = {https://arxiv.org/abs/2610.33427}
}

@article{vershynin2011products,
  author  = {Vershynin, Roman},
  title   = {Spectral norm of products of random and deterministic matrices},
  journal = {Probab. Theory Related Fields},
  volume  = {150},
  number  = {3-4},
  pages   = {471--509},
  year    = {2011},
  doi     = {10.1007/s00440-010-0281-z}
}

@article{xiyangyin2017product,
  author  = {Xi, Haokai and Yang, Fan and Yin, Jun},
  title   = {Local circular law for the product of a deterministic matrix with a random matrix},
  journal = {Electron. J. Probab.},
  volume  = {22},
  eid     = {60},
  pages   = {1--77},
  year    = {2017},
  doi     = {10.1214/17-EJP76}
}

@article{kopel2020products,
  author  = {Kopel, Phil and O'Rourke, Sean and Vu, Van},
  title   = {Random matrix products: universality and least singular values},
  journal = {Ann. Probab.},
  volume  = {48},
  number  = {3},
  pages   = {1372--1410},
  year    = {2020},
  doi     = {10.1214/19-AOP1396}
}

@article{chaudhuri2021products,
  author  = {Chaudhuri, Rohit and Jain, Vishesh and Pillai, Natesh S.},
  title   = {Universality and least singular values of random matrix products: a simplified approach},
  journal = {Bernoulli},
  volume  = {27},
  number  = {4},
  pages   = {2519--2531},
  year    = {2021},
  doi     = {10.3150/20-BEJ1320}
}

@article{mourrat2026products,
  author  = {Mourrat, Jean-Christophe},
  title   = {Operator {$\ell^\infty\to\ell^\infty$} norm of products of random matrices with iid entries},
  journal = {Electron. J. Probab.},
  volume  = {31},
  eid     = {29},
  pages   = {1--26},
  year    = {2026},
  doi     = {10.1214/26-EJP1488}
}

@inproceedings{li2020fedavg,
  author    = {Li, Xiang and Huang, Kaixuan and Yang, Wenhao
               and Wang, Shusen and Zhang, Zhihua},
  title     = {On the Convergence of {FedAvg} on {Non-IID} Data},
  booktitle = {International Conference on Learning Representations},
  year      = {2020},
  url       = {https://openreview.net/forum?id=HJxNAnVtDS},
  eprint    = {1907.02189},
  eprinttype = {arxiv}
}

@inproceedings{fraboni2021clustered,
  author    = {Fraboni, Yann and Vidal, Richard and Kameni, Laetitia
               and Lorenzi, Marco},
  title     = {Clustered Sampling: Low-Variance and Improved
               Representativity for Clients Selection in Federated Learning},
  booktitle = {Proceedings of the 38th International Conference
               on Machine Learning},
  series    = {Proceedings of Machine Learning Research},
  volume    = {139},
  pages     = {3407--3416},
  publisher = {PMLR},
  year      = {2021},
  url       = {https://proceedings.mlr.press/v139/fraboni21a.html}
}

@article{so2023securing,
  author       = {So, Jinhyun and Ali, Ramy E. and G{\"u}ler, Ba\c{s}ak
                  and Jiao, Jiantao and Avestimehr, A. Salman},
  title        = {Securing Secure Aggregation: Mitigating Multi-Round
                  Privacy Leakage in Federated Learning},
  journaltitle = {Proceedings of the AAAI Conference on Artificial Intelligence},
  volume       = {37},
  number       = {8},
  pages        = {9864--9873},
  year         = {2023},
  doi          = {10.1609/aaai.v37i8.26177},
  eprint       = {2106.03328},
  eprinttype   = {arxiv},
  note         = {Extended version available as arXiv:2106.03328}
}

@inproceedings{lam2021gradient,
  author    = {Lam, Maximilian and Wei, Gu-Yeon and Brooks, David
               and Reddi, Vijay Janapa and Mitzenmacher, Michael},
  title     = {Gradient Disaggregation: Breaking Privacy in Federated
               Learning by Reconstructing the User Participant Matrix},
  booktitle = {Proceedings of the 38th International Conference
               on Machine Learning},
  series    = {Proceedings of Machine Learning Research},
  volume    = {139},
  pages     = {5959--5968},
  publisher = {PMLR},
  year      = {2021},
  eprint    = {2106.06089},
  eprinttype = {arxiv}
}

@article{tran,
  author  = {Tran, Tuan},
  title   = {The smallest singular value of random combinatorial matrices},
  journal = {arXiv preprint arXiv:2007.06318},
  year    = {2020}
}

@article{litvak2005smallest,
  author  = {Litvak, Alexander E. and Pajor, Alain and Rudelson, Mark
             and Tomczak-Jaegermann, Nicole},
  title   = {Smallest singular value of random matrices and geometry
             of random polytopes},
  journal = {Adv. Math.},
  volume  = {195},
  number  = {2},
  pages   = {491--523},
  year    = {2005},
  doi     = {10.1016/j.aim.2004.08.004}
}

@article{fernandez2025logconcave,
  author  = {Fernandez V, Manuel and Livshyts, Galyna V. and Mui, Stephanie},
  title   = {On the Smallest Singular Value of Log-Concave Random Matrices},
  journal = {arXiv preprint arXiv:2508.17745},
  year    = {2025}
}

@article{dabagia2024inhomogeneous,
  author  = {Dabagia, Max and Fernandez V, Manuel},
  title   = {The smallest singular value of inhomogenous random rectangular matrices},
  journal = {arXiv preprint arXiv:2408.14389v2},
  year    = {2024}
}

@article{fernandez2024distance,
  author  = {Fernandez V, Manuel},
  title   = {A distance theorem for inhomogenous random rectangular matrices},
  journal = {arXiv preprint arXiv:2408.06309v2},
  year    = {2024}
}

@inproceedings{tikhomirov2022quantitative,
  author    = {Tikhomirov, Konstantin},
  title     = {Quantitative invertibility of non-Hermitian random matrices},
  booktitle = {Proceedings of the International Congress of Mathematicians 2022},
  volume    = {4},
  pages     = {3292--3313},
  publisher = {EMS Press},
  address   = {Berlin},
  year      = {2023},
  doi       = {10.4171/ICM2022/10}
}

@article{litvak2022singularity,
  author  = {Litvak, Alexander E. and Tikhomirov, Konstantin E.},
  title   = {Singularity of sparse {Bernoulli} matrices},
  journal = {Duke Math. J.},
  volume  = {171},
  number  = {5},
  pages   = {1135--1233},
  year    = {2022},
  doi     = {10.1215/00127094-2021-0056}
}

@article{yu2025sparsediscrete,
  author  = {Yu, Kexin},
  title   = {The smallest singular value of sparse discrete random matrices},
  journal = {arXiv preprint arXiv:2507.17741},
  year    = {2025}
}

@article{basakrudelson2017invertibility,
  author  = {Basak, Anirban and Rudelson, Mark},
  title   = {Invertibility of sparse non-{Hermitian} matrices},
  journal = {Adv. Math.},
  volume  = {310},
  pages   = {426--483},
  year    = {2017},
  doi     = {10.1016/j.aim.2017.02.009}
}

@article{basak2021sharp,
  author  = {Basak, Anirban and Rudelson, Mark},
  title   = {Sharp transition of the invertibility of the adjacency matrices
             of sparse random graphs},
  journal = {Probab. Theory Related Fields},
  volume  = {180},
  number  = {1--2},
  pages   = {233--308},
  year    = {2021},
  doi     = {10.1007/s00440-021-01038-4}
}

@article{arioli2007partial,
  author  = {Arioli, Mario and Baboulin, Marc and Gratton, Serge},
  title   = {A Partial Condition Number for Linear Least Squares Problems},
  journal = {SIAM Journal on Matrix Analysis and Applications},
  volume  = {29},
  number  = {2},
  pages   = {413--433},
  year    = {2007},
  doi     = {10.1137/050643088},
  url     = {https://doi.org/10.1137/050643088}
}

@article{baboulin2009conditioning,
  author  = {Baboulin, Marc and Dongarra, Jack and Gratton, Serge and Langou, Julien},
  title   = {Computing the Conditioning of the Components of a Linear Least-Squares Solution},
  journal = {Numerical Linear Algebra with Applications},
  volume  = {16},
  number  = {7},
  pages   = {517--533},
  year    = {2009},
  doi     = {10.1002/nla.627},
  url     = {https://doi.org/10.1002/nla.627}
}

@article{kwan2019anticoncentration,
  author  = {Kwan, Matthew and Sudakov, Benny and Tran, Tuan},
  title   = {Anticoncentration for subgraph statistics},
  journal = {J. Lond. Math. Soc.},
  volume  = {99},
  number  = {3},
  pages   = {757--777},
  year    = {2019},
  doi     = {10.1112/jlms.12192},
  url     = {https://doi.org/10.1112/jlms.12192}
}

\vspace{3cm}
\par\noindent
\textsc{Yuxiao Shi}
\\
\textit{Email address:} \texttt{yshi6821@gmail.com}

\par\noindent
Zhongtai Securities Institute for Financial Studies, Shandong University, Jinan, 250100, China.

\par\noindent
\textsc{Kexin Yu}
\\
\textit{Email address:} \texttt{Kexinyu01@gmail.com}

\par\noindent
School of Mathematics, Shandong University, Jinan, 250100, China.

\par\noindent
\textsc{Tingzhou Yu} 
\\
\textit{Email address:} \texttt{tingzho1@ualberta.ca}

\par\noindent
Department of Mathematical and Statistical Sciences, University of Alberta, Edmonton, AB, T6G 2R3, Canada.

\end{document}